\newcounter{mmacnt}
\def\restartmma{\setcounter{mmacnt}{0}}
\newenvironment{mma}{
  \par
 \catcode`|=\active
 \parskip=2pt\parindent=0pt 
 \small
 \def\In##1\\{%
   \def\linebreak{\hfill\break\null\qquad}%
   \refstepcounter{mmacnt}
   \hangindent=2.5em\hangafter=0
   \leavevmode
   \llap{\tiny\sffamily In[\arabic{mmacnt}]:=\kern.5em}%
   \mathversion{bold}\scriptsize$\tt\bf\displaystyle##1$\normalsize
   \mathversion{normal}\par
 }%
 \def\Print##1\\{%
   \def\linebreak{\hfill\break}%
   \hangindent=2.5em\hangafter=0
   \leavevmode\scriptsize ##1\par}%
 \def\Out##1\\{%
   \vspace*{-0.2cm}\def\linebreak{$\hfill\break\null\hfill$}%
   \kern\abovedisplayskip\par
   \hangindent=2.5em\hangafter=0
   \leavevmode
   \llap{\tiny\sffamily Out[\arabic{mmacnt}]=\kern.5em}
   \scriptsize$\displaystyle\tt##1$\normalsize\hfill\null\par
   \kern\belowdisplayskip\vspace*{-0.3cm}
 }%
 \def\Warning##1##2\\{%
   \def\linebreak{\hfill\break}%
   \hangindent=2.5em\hangafter=0
   \leavevmode
   {\scriptsize##1 : ##2}\par}%
}{%
 \par\smallskip
}
\newcommand{\LoadP}[1]{\fcolorbox{black}{white}{
\begin{minipage}[t]{9.5cm}
\footnotesize #1
\end{minipage}}}
\newcommand{\myOut}[1]{{\sffamily Out[#1]}}
\def\MLabel#1{{\refstepcounter{mmacnt}\label{#1}}\addtocounter{mmacnt}{-1}}
\renewcommand{\(}{\left\(}
\renewcommand{\)}{\right\)}
\renewcommand{\[}{\left\[}
\renewcommand{\]}{\right\]}
\numberwithin{equation}{section}
\theoremstyle{plain}
\newtheorem{theorem}{Theorem}[section]
\newtheorem{lemma}[theorem]{Lemma}
\newtheorem{corollary}[theorem]{Corollary}
\newtheorem{definition}[theorem]{Definition}
\newtheorem{question}[theorem]{Question}
\newtheorem{remark}[theorem]{Remark}
\def\proof{\@ifnextchar[{\@oproof}{\@nproof}}
\def\@oproof[#1][#2]{\trivlist\item[\hskip\labelsep\textit{#2 Proof of\
		#1.}~]\ignorespaces}
\def\@nproof{\trivlist\item[\hskip\labelsep\textit{Proof.}~]\ignorespaces}
\begin{document}

\vspace*{-1.4cm}

\begin{flushleft} 
\hfill RISC Report Series 22--13
\end{flushleft}

\vspace*{1.4cm}

\title{Error bounds for the asymptotic expansion of the partition function}

%
%
\author{Koustav Banerjee}
\address{Research Institute for Symbolic Computation (RISC), Johannes Kepler University Linz, Altenberger Stra{\ss}e 69, 4040 Linz, Austria}
\email{koustav.banerjee@risc.jku.at}

\author{Peter Paule}
\address{Research Institute for Symbolic Computation (RISC), Johannes Kepler University Linz, Altenberger Stra{\ss}e 69, 4040 Linz, Austria}
\email{peter.paule@risc.jku.at}

\author{Cristian-Silviu Radu}
\address{Research Institute for Symbolic Computation (RISC), Johannes Kepler University Linz, Altenberger Stra{\ss}e 69, 4040 Linz, Austria}
\email{silviu.radu@risc.jku.at}

\author{Carsten Schneider}
\address{Research Institute for Symbolic Computation (RISC), Johannes Kepler University Linz, Altenberger Stra{\ss}e 69, 4040 Linz, Austria}
\email{Carsten.Schneider@risc.jku.at}

\keywords{The partition function, the Hardy-Ramanujan-Rademacher formula, asymptotics, error bound}

\subjclass{05A16, 11P82, 68W30}


\begin{abstract}
Asymptotic study on the partition function $p(n)$ began with the work of Hardy and Ramanujan. Later Rademacher obtained a convergent series for $p(n)$ and an error bound was given by Lehmer. Despite having this, a full asymptotic expansion for $p(n)$ with an explicit error bound is not known. Recently O'Sullivan studied the asymptotic expansion of $p^{k}(n)$-partitions into $k$th powers, initiated by Wright, and consequently obtained an asymptotic expansion for $p(n)$ along with a concise description of the coefficients involved in the expansion but without any estimation of the error term. Here we consider a detailed and comprehensive analysis on an estimation of the error term obtained by truncating the asymptotic expansion for $p(n)$ at any positive integer $n$. This gives rise to an infinite family of inequalities for $p(n)$ which finally answers to a question proposed by Chen. Our error term estimation predominantly relies on applications of algorithmic methods from symbolic summation.    
\end{abstract}

\maketitle              

\tableofcontents



\section{Introduction}\label{sec:intro}
A partition of a positive integer $n$ is a non-increasing sequence of positive integers which sum to $n$, and the partition function $p(n)$ counts the number of partitions of $n$. In their epoch-making breakthrough work in the theory of partitions, Hardy and Ramanujan \cite{hardy1918asymptotic} proved that
\begin{equation}\label{HardyRamanujan0}
p(n) \sim \dfrac{1}{4n\sqrt{3}}e^{\pi\sqrt{\frac{2n}{3}}}\ \ \text{as}\ n \rightarrow \infty.
\end{equation}
They also proved that $p(n)$ is the integer nearest to 
\begin{equation}\label{HardyRamanujan1}
\dfrac{1}{2\sqrt{2}}\sum_{q=1}^{\nu}\sqrt{q}A_{q}(n)\psi_{q}(n),
\end{equation}
where $A_q(n)$ is a certain exponential sum, $\nu=\nu(n)$ is of the order of $\sqrt{n}$, and 
\begin{equation*}
\psi_q(n)=\dfrac{d}{dn}\Biggl(\exp\Bigl\{{\dfrac{C}{q}\lambda_n}\Bigr\}\Biggr),\ \lambda_n=\sqrt{n-\frac{1}{24}},\ C=\pi\sqrt{\frac{2n}{3}}.
\end{equation*}
 Extending $\nu$ to infinity, Lehmer \cite{lehmer1937hardy} proved that \eqref{HardyRamanujan1} is a divergent series. Rademacher \cite{rademacher1937convergent,rademacher1938partition,rademacher1943expansion} considered a modification of \eqref{HardyRamanujan1} that
presents a convergent series for $p(n)$ which reads:
\begin{equation}\label{Rademacher}
p(n)= \dfrac{1}{\pi \sqrt{2}}\sum_{k=1}^{\infty}A_k(n)\dfrac{d}{dn}\Biggl( \dfrac{\sinh\Bigl(C\lambda_n/k\Bigr)}{\lambda_n}\Biggr).
\end{equation}
Lehmer \cite{lehmer1938series,lehmer1939remainders} obtained an error bound after subtraction of the $N$th partial sum from the convergent series \eqref{Rademacher}. 
 
The study of a full asymptotic expansion for $p(n)$ can be traced in two directions by considering two different classes that arise from imposing restrictions on parts of partitions. The two restricted families are $p^{s}(n)$, the number of partitions of $n$ into perfect $s$th powers, and $p(n,k)$, the number of partitions of $n$ into at most $k$ parts. As an application of the ``circle method", Hardy and Ramanujan \cite[Section 7, 7.3]{hardy1918asymptotic} obtained the main term in the asymptotic expansion of $p^{s}(n)$. This of course retrieves \eqref{HardyRamanujan0} when we take $s=1$. Wright \cite{wright1934asymptotic,wright1935asymptotic} extended the work of Hardy and Ramanujan and obtained a full asymptotic expansion for $p^{s}(n)$. Recently O'Sullivan \cite{o2022detailed} proposed a simplified proof of Wright's results on the asymptotic expansion of $p^{s}(n)$, and consequently obtained an asymptotic formula for $p(n)$.
\begin{theorem}\cite[Proposition 4.4]{o2022detailed}\label{Sullivanthm0}
	Let $n$ and $R$ be positive integers. As $n \rightarrow \infty$,
\begin{equation}\label{Sullivan1}
p(n)=\dfrac{e^{\pi\sqrt{2n/3}}}{4n\sqrt{3}}\Biggl(1+\sum_{t=1}^{R-1}\dfrac{\omega_t}{\sqrt{n}^t}+O\Bigl(n^{-R/2}\Bigr)\Biggr),\end{equation}
with an implied constant depending only on $R$, where
\begin{equation}\label{Sullivan2}
\omega_t=\dfrac{1}{(-4\sqrt{6})^t}\sum_{k=0}^{\frac{t+1}{2}}\binom{t+1}{k}\dfrac{t+1-k}{(t+1-2k)!}\Bigl(\frac{\pi}{6}\Bigr)^{t-2k}.
\end{equation}
\end{theorem}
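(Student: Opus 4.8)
The plan is to read off \eqref{Sullivan1} from Rademacher's convergent series \eqref{Rademacher}: its term $k=1$ already carries the entire asymptotic expansion, while every other term -- together with the subdominant half of the $k=1$ term -- is absorbed into the error. Throughout write $C=\pi\sqrt{2/3}$, so that $C\lambda_n=\tfrac{\pi}{6}\sqrt{24n-1}$, and $\lambda_n=\sqrt{n-1/24}$.

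\emph{Isolating the leading term.} One has $A_1(n)=1$ and $|A_k(n)|\le\varphi(k)<k$ for $k\ge2$, while differentiating $\sinh(C\lambda_n/k)/\lambda_n$ and bounding $e^{C\lambda_n/k}\le e^{C\lambda_n/2}$ shows the $k$-th term of \eqref{Rademacher} is $O\!\left(e^{\pi\sqrt{2n/3}/2}\,n^{O(1)}\right)$ for $2\le k\le\lambda_n$ and $O(k^{-2})$ for $k>\lambda_n$ (the latter because $\frac{d}{du}\!\left(\frac{\sinh u}{u}\right)$ vanishes at $u=0$ while $C\lambda_n/k$ stays bounded). Summing, the $k\ge2$ contribution to \eqref{Rademacher} is $O\!\left(e^{\pi\sqrt{2n/3}/2}\,n^{O(1)}\right)$ -- sharper tail estimates go back to Lehmer \cite{lehmer1938series}. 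For $k=1$, the relation $\lambda_n'=1/(2\lambda_n)$ gives
\begin{equation*} \frac{1}{\pi\sqrt2}\,\frac{d}{dn}\!\left(\frac{\sinh(C\lambda_n)}{\lambda_n}\right)=\frac{C\cosh(C\lambda_n)}{2\pi\sqrt2\,\lambda_n^{2}}-\frac{\sinh(C\lambda_n)}{2\pi\sqrt2\,\lambda_n^{3}}, \end{equation*}
and replacing $\cosh,\sinh$ by $\tfrac12 e^{C\lambda_n}$ (the $e^{-C\lambda_n}$ parts are negligible) together with $\frac{C}{4\pi\sqrt2}=\frac{1}{4\sqrt3}$ yields $p(n)=\dfrac{e^{C\lambda_n}}{4\sqrt3}\!\left(\dfrac{1}{\lambda_n^{2}}-\dfrac{1}{C\lambda_n^{3}}\right)+O\!\left(e^{\pi\sqrt{2n/3}/2}\,n^{O(1)}\right)$.

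\emph{Re-expanding in $x=n^{-1/2}$.} From $\lambda_n=\sqrt n\,\sqrt{1-x^2/24}$ one gets $C\lambda_n-\pi\sqrt{2n/3}=-\delta(x)$, where $\delta(x):=\tfrac{C}{x}\!\left(1-\sqrt{1-x^2/24}\right)$ is analytic at $x=0$ (removable singularity) with $\delta(0)=0$, and
\begin{equation*} \frac{1}{\lambda_n^{2}}-\frac{1}{C\lambda_n^{3}}=\frac{x^{2}}{1-x^2/24}\left(1-\frac{x}{C\sqrt{1-x^2/24}}\right). \end{equation*}
Since $x^2=1/n$, the previous display becomes $p(n)=\dfrac{e^{\pi\sqrt{2n/3}}}{4n\sqrt3}\,\mathcal{G}(x)+O\!\left(e^{\pi\sqrt{2n/3}/2}\,n^{O(1)}\right)$, where
\begin{equation*} \mathcal{G}(x):=\frac{e^{-\delta(x)}}{1-x^2/24}\left(1-\frac{x}{C\sqrt{1-x^2/24}}\right) \end{equation*}
is analytic near $x=0$ with $\mathcal{G}(0)=1$. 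By Taylor's theorem with remainder there is a constant $M_R$, depending only on $R$, with $\left|\mathcal{G}(x)-\sum_{t=0}^{R-1}\omega_t x^{t}\right|\le M_R|x|^{R}$ for $|x|\le\tfrac12$, where $\omega_t:=\mathcal{G}^{(t)}(0)/t!$. Putting $x=n^{-1/2}$ and noting that both the Taylor remainder and the (exponentially smaller) leftover from the first step equal $\dfrac{e^{\pi\sqrt{2n/3}}}{4n\sqrt3}\cdot O(n^{-R/2})$, since $e^{-\pi\sqrt{2n/3}/2}$ beats every power of $n$, we obtain \eqref{Sullivan1} with implied constant depending only on $R$.

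\emph{The closed form \eqref{Sullivan2} -- the main obstacle.} It remains to identify $[x^{t}]\mathcal{G}(x)$ with the right-hand side of \eqref{Sullivan2}. Expanding $e^{-\delta(x)}$, $(1-x^2/24)^{-1}$ and $(1-x^2/24)^{-1/2}$ by the exponential and binomial series and reading off the coefficient of $x^{t}$ produces a terminating double sum; after the simplification $\binom{t+1}{k}(t+1-k)=(t+1)\binom{t}{k}$ and rearranging the constants ($C^2=2\pi^2/3$, $\pi/6=C/(2\sqrt6)$, $(-4\sqrt6)(2\sqrt6)=-48$) this should collapse to a single terminating hypergeometric summation identity -- exactly the sort of statement verifiable by creative telescoping. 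In O'Sullivan's treatment \cite{o2022detailed} the formula \eqref{Sullivan2} instead drops out of his general coefficient formula for partitions into perfect powers, specialized to first powers. Everything else above -- the tail bound, the elementary differentiation, the passage to an analytic function of $n^{-1/2}$, and the Taylor remainder -- is routine; this final combinatorial evaluation of the coefficients is the real heart of the matter.
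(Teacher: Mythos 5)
First, a point of reference: the paper does not prove this theorem; it is quoted from O'Sullivan \cite{o2022detailed}, and the closest the paper comes to it is Section \ref{sect1}, where the main term $\frac{\sqrt{12}e^{\mu(n)}}{24n-1}\bigl(1-\frac{1}{\mu(n)}\bigr)$ is expanded into $\frac{e^{\pi\sqrt{2n/3}}}{4n\sqrt{3}}\sum_t g(t)n^{-t/2}$ with double-sum coefficients $g(t)$, followed by Remark \ref{Remarkp(n)asymp} asserting $g(t)=\omega_t$. Your analytic skeleton is sound and is in fact the same computation in different clothing: your $\mathcal{G}(x)$ is exactly the product $A_1(n)A_2(n)$ of \eqref{eqn2} (since $\mu(n)=C\lambda_n$ and $\sqrt{12}/(24n-1)=1/(4\sqrt{3}\lambda_n^2)$, your isolated $k=1$ term coincides with the paper's major term), the tail estimate for $k\ge 2$ in \eqref{Rademacher} is standard Lehmer material, and the passage to a function analytic in $x=n^{-1/2}$ plus Taylor's theorem correctly delivers \eqref{Sullivan1} with coefficients $\mathcal{G}^{(t)}(0)/t!$ and an implied constant depending only on $R$.

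The genuine gap is the one you yourself flag: the theorem asserts the specific closed form \eqref{Sullivan2}, and your proposal never establishes that $[x^t]\mathcal{G}(x)$ equals that single sum; you only say the resulting terminating double sum ``should collapse'' to it. This is not a routine afterthought --- it is precisely the step the present paper could not carry out by hand either: Remark \ref{Remarkp(n)asymp} obtains \eqref{rmrkpnasympeqn1} by algorithmic verification with \texttt{Sigma}/\texttt{GeneratingFunctions}, and otherwise falls back on the uniqueness of asymptotic expansions (i.e., once O'Sullivan's expansion with coefficients \eqref{Sullivan2} is granted, any other asymptotic expansion in powers of $n^{-1/2}$ must have the same coefficients). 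If you want a self-contained proof of the stated theorem, you must either exhibit and prove the terminating hypergeometric identity you allude to (creative telescoping would indeed do it, but the certificate has to be produced), or import \eqref{Sullivan2} from O'Sullivan's specialization of his $p^s(n)$ coefficient formula --- at which point your argument proves only \eqref{Sullivan1} with unspecified coefficients, and the identification is borrowed rather than proved. As written, the proposal establishes the existence of the expansion but not the formula for $\omega_t$, so it is an incomplete proof of the statement as given.
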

The binomial coefficient is defined as $\binom{x}{k}:=x(x-1)\dots(x-k+1)/k!$ if $k\in\mathbb{Z}_{\geq 0}$, $\binom{x}{0}:=1$, and $\binom{x}{k}:=0$ if $k\in\mathbb{Z}_{<0}$. 
Szekeres \cite{szekeres1953some} proposed an asymptotic expansion for $p(n,k)$ for $n$ and $k$ sufficiently large and considering $k=n$, one obtains the expansion for $p(n)$ as $p(n,k)=p(n)$. Canfield \cite{canfield1997recursions} proved Szekeres' result by using a recursion satisfied by $p(n,k)$ without using theory of complex functions and as a corollary, obtained the main term of the Hardy-Ramanujan formulas for $p(n)$, see \eqref{HardyRamanujan0}. For a probabilistic approach to the asymptotic expansion of $p(n)$, we refer to \cite{brassesco2020expansion}.

The primary objective of this paper is to obtain an explicit and computable error bound for the asymptotic expansion of $p(n)$. A main motivation to consider such a problem is that from the literature, including the works \cite{wright1934asymptotic,szekeres1953some,canfield1997recursions,brassesco2020expansion,o2022detailed}, we could not retrieve any information on the error bound for asymptotic expansion of $p(n)$. An advantage of getting a control over the error bound is that one can prove the $\log$-concavity property of $p(n)$ directly from the asymptotic expansion as speculated by Chen \cite[p. 121]{chen2010recent}. In the language of Theorem \ref{Sullivan1}, Chen's question can be formulated as follows: 
\begin{question}\label{Chenproblem}
Do there exists $d$ and $n_0$ such that
\begin{equation}\label{Chen0}
\dfrac{e^{\pi\sqrt{2n/3}}}{4n\sqrt{3}}\Biggl(1+\sum_{t=1}^{3}\dfrac{\omega_t}{\sqrt{n}^t}-\dfrac{d}{n^2}\Biggr)<p(n)<\dfrac{e^{\pi\sqrt{2n/3}}}{4n\sqrt{3}}\Biggl(1+\sum_{t=1}^{3}\dfrac{\omega_t}{\sqrt{n}^t}+\dfrac{d}{n^2}\Biggr)
\end{equation}
holds for all $n>n_0$?
\end{question}
Chen remarked that \eqref{Chen0} implies that $p(n)$ is $\log$-concave for sufficiently large $n$. Now in order to demystify the phrase ``sufficiently large", explicit information about $n_0$ is required; a question being  intricately connected with the computation of the error bound $d$. A similar phenomena can be found in O'Sullivan's work:
\begin{theorem}\cite[Theorem 1.3, (1.15)]{o2022detailed}\label{Sullivanthm1}
For each positive integer $k$ there exists $\mathcal{D}_k$ so that for all $n \geq \mathcal{D}_k$,
\begin{equation}\label{Sullivan3}
p^k(n)^2\geq p^k(n+1)\cdot p^k(n-1)\cdot (1+n^{-2}).
\end{equation}
\end{theorem}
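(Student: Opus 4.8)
The plan is to read the inequality \eqref{Sullivan3} off the full asymptotic expansion of $p^k(n)$, namely the $k$-th power analogue of Theorem~\ref{Sullivanthm0} due to Wright and revisited by O'Sullivan. Write such an expansion in the form
\begin{equation*}
p^k(n)=\gamma_k\, n^{\mu_k}\exp\!\Bigl(\alpha_k\, n^{1/(k+1)}\Bigr)\Bigl(1+\sum_{t=1}^{R-1}\frac{c_{k,t}}{n^{t/(k+1)}}+O\bigl(n^{-R/(k+1)}\bigr)\Bigr),
\end{equation*}
with $\alpha_k>0$ and suitable real constants $\gamma_k>0$, $\mu_k$, $c_{k,t}$; for $k=1$ this is \eqref{Sullivan1} after rewriting the powers of $\sqrt n$. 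Taking logarithms and using $\log(1+x)=\sum_{j\geq1}(-1)^{j+1}x^j/j$ gives
\begin{equation*}
\log p^k(n)=\alpha_k\, n^{1/(k+1)}+\mu_k\log n+\log\gamma_k+\sum_{t\geq 1}\frac{e_{k,t}}{n^{t/(k+1)}}+O\bigl(n^{-R/(k+1)}\bigr),
\end{equation*}
where, by choosing $R$ large, the error term is below any prescribed negative power of $n$.

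The inequality \eqref{Sullivan3} is equivalent to
\begin{equation*}
\bigl(\log p^k(n+1)+\log p^k(n-1)-2\log p^k(n)\bigr)+\log\bigl(1+n^{-2}\bigr)\leq 0,
\end{equation*}
so the second step is to apply the symmetric second-difference operator $m\mapsto f(m+1)+f(m-1)-2f(m)$ to the logarithmic expansion, invoked simultaneously at $n-1$, $n$, $n+1$. For a power $n^{\beta}$ with $\beta\notin\{0,1\}$ one has, by Taylor expansion, $(n+1)^{\beta}+(n-1)^{\beta}-2n^{\beta}=\beta(\beta-1)\,n^{\beta-2}+O\bigl(n^{\beta-4}\bigr)$, and likewise $\log(n+1)+\log(n-1)-2\log n=-n^{-2}+O(n^{-4})$. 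Summand by summand, the dominant contribution to the second difference comes from the leading term $\alpha_k n^{1/(k+1)}$ and equals
\begin{equation*}
\alpha_k\cdot\frac{1}{k+1}\Bigl(\frac{1}{k+1}-1\Bigr)n^{\frac{1}{k+1}-2}=-\,\frac{\alpha_k\,k}{(k+1)^2}\,n^{-\frac{2k+1}{k+1}},
\end{equation*}
which is strictly negative and of order $n^{-(2k+1)/(k+1)}$; every remaining contribution — from $\mu_k\log n$, from the tail series, from $\log(1+n^{-2})=n^{-2}+O(n^{-4})$, and from the combined error terms — is $O(n^{-2})$ or smaller.

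The conclusion is then a comparison of exponents: since $k\geq 1$ we have $\frac{2k+1}{k+1}=2-\frac1{k+1}<2$, so the negative leading term is of strictly larger order than the $O(n^{-2})$ remainder; hence the left-hand side of the displayed equivalent inequality is negative for all sufficiently large $n$, and one may take $\mathcal{D}_k$ to be any such threshold. The main (and essentially only) obstacle is a bookkeeping check: one must confirm that the $O$-terms of Wright's/O'Sullivan's expansion, when the expansion is applied at the three consecutive arguments $n-1,n,n+1$ and the logarithms are recombined, still contribute only $o(n^{-2})$ to the second difference — which holds as soon as $R$ is chosen with $R/(k+1)>2$, but needs a little care since the error term is not a priori a smooth function of $n$. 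Everything else is the elementary asymptotics of $n^{\beta}$ and $\log n$ under second differencing, so enlarging $R$ affects the size, not the existence, of $\mathcal{D}_k$.
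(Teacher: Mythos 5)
The paper itself contains no proof of this statement: Theorem \ref{Sullivanthm1} is imported verbatim from O'Sullivan \cite{o2022detailed} as background motivation, so there is no in-paper argument to compare yours against. Judged on its own, your sketch is correct and is essentially the standard route (and the one taken in the cited source): rewrite \eqref{Sullivan3} as the statement that the symmetric second difference of $\log p^k$ plus $\log(1+n^{-2})$ is nonpositive, observe that the second difference of the dominant exponent $\alpha_k n^{1/(k+1)}$ contributes $-\alpha_k k(k+1)^{-2}n^{-2+1/(k+1)}$, which is negative and of strictly larger order than the $O(n^{-2})$ contributions of $\mu_k\log n$, of the tail series, and of $\log(1+n^{-2})$. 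The one point you flag as needing care --- that the error term is not a smooth function of $n$ --- is disposed of by the crude triangle-inequality bound: each of the three evaluations of the remainder at $n-1$, $n$, $n+1$ is individually $O(n^{-R/(k+1)})$, hence so is their signed combination, and any $R>2k+1$ already makes this $o\bigl(n^{-(2k+1)/(k+1)}\bigr)$ and therefore negligible against the negative main term; your choice $R/(k+1)>2$ is slightly stronger than necessary but harmless. The only remaining bookkeeping is to justify the shape of the expansion of $p^k(n)$ you start from, which is exactly Wright's theorem as re-proved in \cite{o2022detailed}, so the argument is complete modulo that citation.
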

For $k=1$, Theorem \ref{Sullivanthm1} merely implies that $p(n)$ is $\log$-concave for sufficiently large $n$ although we know that $(p(n))_{n \geq 26}$ is $\log$-concave due to \cite{nicolas1978entiers,desalvo2015log}. Moreover, O'Sullivan \cite[(5.17)]{o2022detailed} proved that for large enough $n$,
\begin{equation*}
\dfrac{p(n+1)p(n-1)}{p(n)^2}\Biggl(1+\dfrac{\pi}{\sqrt{24}n^{3/2}}\Biggr)>1,
\end{equation*}
 settled by Chen, Wang, and Xie \cite{chen2016finite}. The first three authors and Zeng \cite[Theorem 7.6]{BPRZ} proved a stronger version of \eqref{Sullivan3} using an infinite familiy of inequalities for $\log p(n)$.

We conclude this section by discussing the novelty of this paper in brevity. In order to elucidate the term $O\Bigl(n^{-R/2}\Bigr)$ in \eqref{Sullivan1}, determination of the asymptotic growth of the coefficients $\omega_t$ in \eqref{Sullivan2} is required; a task which looks deceptively simple. Our representation of $\omega_t$ is of the following form: 
$$\omega_t=\sum_{u=0}^{t}\gamma(u)\sum_{s=0}^{u}\psi(s).$$
In an effort to estimate the inner sum $\sum_{s=0}^{u}\psi(s)$, the use of the symbolic summation tool \texttt{Sigma} \cite{Schneider:07a} was essential. Schneider considered \cite{Schneider:07a,Schneider:13a,Schneider:21} a broader algorithmic framework that subsumes the theory of difference field and ring extensions together with the method of creative telescoping. This algorithmic tool began to be aimed at a wider class of multi-sums, most frequently encountered in problems of enumerative combinatorics. For example, in Andrews, Paule, and Schneider \cite{APS:05} we can see how \texttt{Sigma} assists to solve the TSPP-problem in an LU-reformulation by Andrews. Beyond the world of combinatorics, applications of \texttt{Sigma} transcends to solve a very general class of Feynman integrals which are of relevance for manifold physical processes in quantum field theory, see \cite{Particle:20}. This paper adds a new facet to the regime of applications of \texttt{Sigma}; in particular, its foray into asymptotic estimation for  partition-like functions seems to begin with this work.

\section{A roadmap for the reader}
In this section we will provide a roadmap on the structure of this paper; i.e., a navigation from the starting point to the final goal of this paper, to facilitate for the reader to follow.

Using the Hardy-Ramanujan-Rademacher formula for $p(n)$ and Lehmer's error bound, Chen, Jia, and Wang \cite[Lemma 2.2]{chen2019higher} proved that for all $n \geq 1207$,
\begin{equation}\label{ChenJiaWang1}
\dfrac{\sqrt{12}e^{\mu(n)}}{24n-1}\Biggl(1-\dfrac{1}{\mu(n)}-\dfrac{1}{\mu(n)^{10}}\Biggr)<p(n)<\dfrac{\sqrt{12}e^{\mu(n)}}{24n-1}\Biggl(1-\dfrac{1}{\mu(n)}+\dfrac{1}{\mu(n)^{10}}\Biggr),
\end{equation}
where for $n\geq 1$, $\mu(n):=\frac{\pi}{6}\sqrt{24n-1}$; a definition which is kept throughout this paper. More generally, due to the first three authors and Zeng, we have the following result.
\begin{theorem}\cite[Theorem 4.4]{BPRZ}\label{bprz1}
	For $k \in \mathbb{Z}_{\geq 2}$, define
	$$\widehat{g}(k):=\dfrac{1}{24}\Biggl(\dfrac{36}{\pi^2}\cdot \nu(k)^2+1\Biggr),$$
	where $\nu(k):=2\log 6+(2\log 2)k+2k \log k+2k \log \log k+\dfrac{5k \log \log k}{\log k}$. Then for all $k\in\mathbb{Z}_{\geq 2}$ and  $n> \widehat{g}(k)$ such that $(n,k)\neq (6,2)$, we have
	\begin{equation}\label{bprzeqn1}
	\frac{\sqrt{12}e^{\mu(n)}}{24n-1}\Biggl(1-\frac{1}{\mu(n)}-\frac{1}{\mu(n)^k}\Biggr)<p(n)<\frac{\sqrt{12}e^{\mu(n)}}{24n-1}\Biggl(1-\frac{1}{\mu(n)}+\frac{1}{\mu(n)^k}\Biggr).
	\end{equation}
\end{theorem}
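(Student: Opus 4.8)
\medskip
\noindent\textit{Sketch of proof.}
The plan is to keep the first two summands of the Rademacher expansion \eqref{Rademacher} in closed form, control the tail with Lehmer's explicit remainder bound, and show that together they are exponentially small relative to the main term $\sqrt{12}\,e^{\mu(n)}/(24n-1)$. Write $p(n)=T_1(n)+T_2(n)+R_2(n)$, with $T_j(n)$ the $j$-th summand of \eqref{Rademacher} and $R_2(n)$ the tail over $k\ge 3$. Using $\tfrac{d}{dn}=\tfrac{1}{2\lambda_n}\tfrac{d}{d\lambda_n}$, the identity $\sinh(C\lambda_n/k)=\sinh(\mu(n)/k)$ in \eqref{Rademacher}, and $A_1(n)=1$, $A_2(n)=(-1)^n$, one obtains
\begin{equation*}
T_1(n)=\frac{\sqrt{12}\,e^{\mu(n)}}{24n-1}\Bigl(1-\tfrac{1}{\mu(n)}\Bigr)+\frac{\sqrt{12}\,e^{-\mu(n)}}{24n-1}\Bigl(1+\tfrac{1}{\mu(n)}\Bigr),
\end{equation*}
together with a companion closed form for $T_2(n)$ carrying $\mu(n)/2$ in place of $\mu(n)$, from which $\bigl|(24n-1)\,T_2(n)/(\sqrt{12}\,e^{\mu(n)})\bigr|$ is an explicit constant times $e^{-\mu(n)/2}$ (plus a smaller $e^{-3\mu(n)/2}$ term). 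For $R_2(n)$ I would invoke Lehmer's bound \cite{lehmer1939remainders} with $N=2$, whose dominant part is $\asymp\mu(n)^{-3}e^{\mu(n)/2}$; multiplied by $(24n-1)/(\sqrt{12}\,e^{\mu(n)})\asymp\mu(n)^{2}e^{-\mu(n)}$ this too is exponentially small.

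\medskip
Substituting $24n-1=36\mu(n)^2/\pi^2$ and subtracting the main term $\tfrac{\sqrt{12}e^{\mu(n)}}{24n-1}(1-\tfrac{1}{\mu(n)})$, the double inequality \eqref{bprzeqn1} becomes, writing $\mu=\mu(n)$, the single estimate
\begin{equation*}
\Bigl|\,e^{-2\mu}\bigl(1+\tfrac1\mu\bigr)+\frac{(24n-1)\,T_2(n)}{\sqrt{12}\,e^{\mu}}+\frac{(24n-1)\,R_2(n)}{\sqrt{12}\,e^{\mu}}\,\Bigr|<\frac{1}{\mu^{k}},
\end{equation*}
whose left-hand side, by the preceding paragraph, is at most $C_0\,e^{-\mu/2}$ for an explicit absolute constant $C_0$.

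\medskip
It then remains to force $C_0\,e^{-\mu(n)/2}<\mu(n)^{-k}$. Since $\mu$ is increasing in $n$ and $\mu(\widehat{g}(k))=\tfrac{\pi}{6}\sqrt{(36/\pi^2)\,\nu(k)^2}=\nu(k)$, the hypothesis $n>\widehat{g}(k)$ is exactly $\mu(n)>\nu(k)$; and since $x\mapsto x-2k\log x$ is increasing for $x>2k$ and $\nu(k)\ge 2k\log k>2k$, it suffices to verify
\begin{equation*}
\nu(k)-2k\log\nu(k)>2\log C_0
\end{equation*}
for all sufficiently large $k$. Expanding $\log\nu(k)=\log k+\log\log k+\log 2+\tfrac{\log\log k}{\log k}+O\!\bigl(\tfrac{(\log\log k)^2}{(\log k)^2}\bigr)$, the $2k\log k$, the $2k\log\log k$ and the $(2\log2)k$ summands of $\nu(k)$ are cancelled exactly by $2k\log\nu(k)$, leaving a positive surplus of order $\tfrac{3k\log\log k}{\log k}$; this is precisely the job of the $2k\log\log k$, $\tfrac{5k\log\log k}{\log k}$ and $2\log 6$ corrections in $\nu(k)$. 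For the finitely many remaining small $k$ only a bounded range of $n$ is left (for $2\le k\le 10$ one may quote \eqref{ChenJiaWang1} for $n\ge1207$, since $\mu(n)\ge1$ makes $\mu(n)^{-10}\le\mu(n)^{-k}$); a direct numerical check of these finitely many pairs singles out $(n,k)=(6,2)$, where $p(6)=11$ narrowly exceeds the claimed upper bound, as the unique exception.

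\medskip
I expect the main obstacle to be making $C_0$ explicit --- carefully propagating the rational-in-$\mu$ factors produced by $\tfrac{d}{dn}\bigl(\sinh(\mu(n)/k)/\lambda_n\bigr)$ and by Lehmer's bound --- and then certifying $\nu(k)-2k\log\nu(k)-2\log C_0>0$ \emph{uniformly} from the first relevant value of $k$ onward, i.e.\ upgrading the soft cancellation above into a fully explicit inequality; the numerical verification of the finitely many small cases is then routine.
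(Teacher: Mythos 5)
You should first be aware that the paper does not prove this statement at all: Theorem \ref{bprz1} is imported verbatim from \cite[Theorem 4.4]{BPRZ} and used as the starting point of the whole argument, so there is no internal proof to compare yours against. Judged on its own terms, your outline follows the natural (and almost certainly the intended) route: truncate Rademacher's series \eqref{Rademacher} after two terms, extract the closed forms of $T_1$ and $T_2$ via $\tfrac{d}{dn}=\tfrac{1}{2\lambda_n}\tfrac{d}{d\lambda_n}$ and $A_1=1$, $A_2(n)=(-1)^n$, control the tail by Lehmer's explicit remainder, and reduce everything to $C_0e^{-\mu/2}<\mu^{-k}$ under $\mu(n)>\nu(k)$. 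Your identity $\mu(\widehat g(k))=\nu(k)$ and the closed form of $T_1$ are correct, and your identification of $(6,2)$ as the unique exception checks out numerically ($p(6)=11$ versus an upper bound of about $10.99$).

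The gaps are, however, more than bookkeeping. First, the sufficient condition $\nu(k)-2k\log\nu(k)>2\log C_0$ genuinely fails for small $k$: numerically $\nu(3)\approx16.18$ gives $\nu(3)-6\log\nu(3)\approx-0.52$, and $k=4$ gives only about $+1.0$, while any honest $C_0$ must absorb at least $\tfrac12e^{-\mu/2}$ from $T_2$ and a Lehmer contribution of roughly $\tfrac{15}{\mu}e^{-\mu/2}$, forcing $2\log C_0$ well above these values there. So the asymptotic leg of your argument only covers $k$ from roughly $5$ onward, and the entire burden for $k\in\{2,3,4\}$ (and safely up to $10$) falls on the fallback via \eqref{ChenJiaWang1} plus a finite computation up to $n=1206$ --- which you assert but do not perform. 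Second, for $k=2$ one has $\log\log 2<0$, hence $\nu(2)\approx2.37<4=2k$, so the monotonicity of $x\mapsto x-2k\log x$ on $x>2k$ cannot even be invoked from the left endpoint $\nu(2)$; this case must be handled entirely by the fallback. Third, the constant $C_0$ and the uniform verification of $\nu(k)-2k\log\nu(k)>2\log C_0$ for all $k$ beyond the crossover are left open, and you correctly flag this as the hard part. In short: the strategy is sound and standard, but as written it is a program rather than a proof, and the range of $k$ for which the ``soft'' asymptotic cancellation suffices is smaller than the sketch suggests.
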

The goal of this paper is to derive an inequality of the form
\begin{equation}\label{goaleqn1}
\frac{e^{\pi\sqrt{2n/3}}}{4n\sqrt{3}}\Biggl(\sum_{t=0}^{k-1}\frac{g(t)}{\sqrt{n}^j}+\frac{L(k)}{\sqrt{n}^k}\Biggr) < p(n) < \frac{e^{\pi\sqrt{2n/3}}}{4n\sqrt{3}}\Biggl(\sum_{t=0}^{k-1}\frac{g(t)}{\sqrt{n}^t}+\frac{U(k)}{\sqrt{n}^k}\Biggr),
\end{equation}
stated precisely in Theorem \ref{Mainthm}, starting from the inequality \eqref{bprzeqn1}. As a consequence we obtain Corollary \ref{cor1} which will give an explicit answer to the problem stated in Question \ref{Chenproblem} and which, as a further consequence reveals that $p(n)$ is $\log$-concave for all $n \geq 26$, see Remark \ref{corremark1}.

The first step is to find explicitly the coefficients $g(t)$ such that
$$\dfrac{\sqrt{12}\ e^{\mu(n)}}{24n-1}\Bigl(1-\dfrac{1}{\mu(n)}\Bigr)=\frac{e^{\pi\sqrt{2n/3}}}{4n\sqrt{3}}\sum_{t=0}^{\infty}\frac{g(t)}{\sqrt{n}^t}.$$
 This is done in Section \ref{sect1} by computing separately $g(2t)$ and $g(2t+1)$. In spite of having a double sum representation for $g(t)$, we will see that the coefficients $g(t)$ are indeed equal to $\omega_t$ as in Theorem \ref{Sullivanthm0}. 
 
The next step is to estimate the number $g(t)$ in the following form:
\begin{equation}\label{fjineq}
f(t)-l(t)\leq g(t)\leq f(t)+u(t).
\end{equation}
Here $f(t)$ has the property that $\lim_{t\to \infty}\frac{g(t)}{f(t)}=1$, $\lim_{t\to \infty}\frac{l(t)}{f(t)}=0$, and $\lim_{t\to \infty}\frac{u(t)}{f(t)}=0$.
Precise descriptions for $f(t)$, $u(t)$, and $l(t)$ are given in Section \ref{sect3} along with the inequalities of the form \eqref{fjineq}. In order to prove such inequalities, we will use the preliminary lemmas from Section \ref{sect2} and the summation package \texttt{Sigma}. 

Finally in Section \ref{sec7}, applying the bounds for $g(t)$, given in Section \ref{sect3}, we find $\widehat{L}_1(k), \widehat{U}_1(k)$ such that
$$\frac{\widehat{L}_1(k)}{\sqrt{n}^k}<\sum_{t=k}^{\infty}\frac{g(t)}{\sqrt{n}^t}<\frac{\widehat{U}_1(k)}{\sqrt{n}^k}.$$
Also we compute explicitly $\widehat{L}_2(k)$ and $\widehat{U}_2(k)$ such that
$$\frac{e^{\pi\sqrt{2n/3}}}{4n\sqrt{3}}\frac{\widehat{L}_2(k)}{\sqrt{n}^k}<\dfrac{\sqrt{12}\ e^{\mu(n)}}{24n-1}\frac{1}{\mu(n)^k}<\frac{e^{\pi\sqrt{2n/3}}}{4n\sqrt{3}}\frac{\widehat{U}_2(k)}{\sqrt{n}^k}.$$
Combining the error bounds as $L(k)=\widehat{L}_1(k)+\widehat{L}_2(k)$ and $U(k)=\widehat{U}_1(k)+\widehat{U}_2(k)$, we arrive at the desired inequality \eqref{goaleqn1} for $p(n)$.
 \section{Estimation of the coefficients $g(t)$}\label{sect1}
From Theorem \ref{bprz1}, we have for all $k\in\mathbb{Z}_{\geq 2}$ and  $n> \widehat{g}(k)$ such that $(n,k)\neq (6,2)$,
\begin{equation}\label{eqn1}
\dfrac{\sqrt{12}\ e^{\mu(n)}}{24n-1}\Bigl(1-\dfrac{1}{\mu(n)}-\dfrac{1}{\mu(n)^k}\Bigr)<p(n)<\dfrac{\sqrt{12}\ e^{\mu(n)}}{24n-1}\Bigl(1-\dfrac{1}{\mu(n)}+\dfrac{1}{\mu(n)^k}\Bigr).
\end{equation}
Rewrite the major term $\dfrac{\sqrt{12}\ e^{\mu(n)}}{24n-1}\Bigl(1-\dfrac{1}{\mu(n)}\Bigr)$ in the following way: 
\begin{equation}\label{eqn2}
\dfrac{\sqrt{12}\ e^{\mu(n)}}{24n-1}\Bigl(1-\dfrac{1}{\mu(n)}\Bigr)=\dfrac{1}{4n\sqrt{3}}e^{\pi\sqrt{2n/3}}\ \underset{:=A_1(n)}{\underbrace{e^{\pi\sqrt{2n/3}\ \bigl(\sqrt{1-\frac{1}{24n}}-1\bigr)}}} \underset{:=A_2(n)}{\underbrace{\Bigl(1-\dfrac{1}{24n}\Bigr)^{-1}\Bigl(1-\dfrac{1}{\mu(n)}\Bigr)}}.
\end{equation}
Next we compute the Taylor expansion of the residue parts of $A_1(n)$ and $A_2(n)$, defined in \eqref{eqn2}.\\
  \begin{definition}\label{newdef1}
For $t\in \mathbb{Z}_{\geq 0}$, define
\begin{equation}\label{defeqn0}
e_1(t) :=
\begin{cases}
1, &\quad \text{if}\ t=0\\
\dfrac{(-1)^t}{(24)^t}\dfrac{(1/2-t)_{t+1}}{t}\displaystyle \sum_{u=1}^{t}\dfrac{(-1)^u (-t)_u}{(t+u)! (2u-1)!}\Bigl(\dfrac{\pi^2}{36}\Bigr)^u, &\quad \text{otherwise}
\end{cases},
\end{equation}
and
\begin{equation}\label{defeqn1}
E_1\Bigl(\dfrac{1}{\sqrt{n}}\Bigr):= \sum_{t=0}^{\infty}e_1(t) \Bigl(\dfrac{1}{\sqrt{n}}\Bigr)^{2t}, \ n\geq 1.
\end{equation}	
\end{definition}
\begin{definition}\label{newdef2}
For $t\in \mathbb{Z}_{\geq 0}$, define
\begin{equation}\label{defeqn2}
o_1(t) := -\dfrac{\pi}{12\sqrt{6}} \Biggl(\dfrac{(-1)^t(1/2-t)_{t+1}}{(24)^t}\sum_{u=0}^{t}\dfrac{(-1)^u(-t)_u}{(t+u+1)!(2u)!}\Bigl(\dfrac{\pi^2}{36}\Bigr)^u\Biggr)
\end{equation}
and
\begin{equation}\label{defeqn3}
O_1\Bigl(\dfrac{1}{\sqrt{n}}\Bigr):= \sum_{t=0}^{\infty}o_1(t) \Bigl(\dfrac{1}{\sqrt{n}}\Bigr)^{2t+1}, \ n\geq 1.
\end{equation}	
\end{definition}
\begin{lemma}\label{PP1}
  For $j$, $k\in \mathbb{Z}_{\geq 0}$,
  \begin{equation}\label{PPP1}
    \sum_{i=0}^k(-1)^i\binom{k}{i}\binom{i/2}{j}=\left\{\begin{array}{cc} 1, & j=k=0\\ (-1)^j 2^{k-2j}\frac{k}{j}\binom{2j-k-1}{j-k}, & \text{otherwise}\end{array}\right..
    \end{equation}
\end{lemma}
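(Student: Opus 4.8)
The plan is to treat the sum $S(j,k):=\sum_{i=0}^k(-1)^i\binom{k}{i}\binom{i/2}{j}$ as a single-variable hypergeometric sum in $i$ and evaluate it in closed form. First I would dispose of the trivial case $j=0$: then $\binom{i/2}{0}=1$, so $S(0,k)=\sum_{i=0}^k(-1)^i\binom{k}{i}=[k=0]$ by the binomial theorem, which matches the right-hand side (the formula gives $1$ when $j=k=0$, and for $j=0$, $k\geq 1$ the ``otherwise'' branch contains the factor $k/j$ with $j=0$ — so one must read the $j=0,k\ge 1$ case as belonging to the first branch, i.e.\ the statement is implicitly that $S=0$ there; I would note this and move on). For $j\ge 1$ the summand $(-1)^i\binom{k}{i}\binom{i/2}{j}$ is a proper hypergeometric term in $i$, so I would apply Zeilberger's algorithm (creative telescoping) to produce a recurrence in $k$ for $S(j,k)$, or — cleaner here — recognize the sum directly. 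Writing $\binom{i/2}{j}=\frac{(i/2)(i/2-1)\cdots(i/2-j+1)}{j!}$ and splitting the even/odd values of $i$ is messy; instead I would use the standard finite-difference identity: $S(j,k)=(-1)^k \,\Delta^k f(0)$ where $\Delta$ is the forward difference operator and $f(x)=\binom{x/2}{j}$, a polynomial in $x$ of degree $j$. Hence $S(j,k)=0$ automatically for $k>j$, and for $k\le j$ it equals $(-1)^k$ times $k!$ times the leading-order data of $f$ in the Newton-forward-difference expansion.

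The cleanest route to the explicit constant $(-1)^j 2^{k-2j}\frac{k}{j}\binom{2j-k-1}{j-k}$ is generating functions. I would start from
\begin{equation*}
\sum_{j\ge 0}\binom{x/2}{j}z^j=(1+z)^{x/2},
\end{equation*}
multiply by $(-1)^i\binom{k}{i}$ and sum over $i=0,\dots,k$:
\begin{equation*}
\sum_{j\ge 0}S(j,k)\,z^j=\sum_{i=0}^k(-1)^i\binom{k}{i}(1+z)^{i/2}=\bigl(1-(1+z)^{1/2}\bigr)^{k}.
\end{equation*}
So everything reduces to extracting $[z^j]\bigl(1-\sqrt{1+z}\bigr)^{k}$. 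Substituting $w=1-\sqrt{1+z}$, i.e.\ $z=w^2-2w=-w(2-w)$, and using Lagrange inversion (or the Lagrange--Bürmann formula) to read off $[z^j]w^k$, gives a single binomial coefficient; matching it against $(-1)^j 2^{k-2j}\frac{k}{j}\binom{2j-k-1}{j-k}$ is then a routine simplification of Pochhammer symbols. Concretely, Lagrange inversion with $w=z\phi(w)$ where $\phi(w)=-1/(2-w)$ yields $[z^j]w^k=\frac{k}{j}[w^{j-k}]\phi(w)^j=\frac{k}{j}[w^{j-k}](-1)^j(2-w)^{-j}$, and $[w^{j-k}](2-w)^{-j}=2^{-j-(j-k)}\binom{-j}{j-k}(-1)^{j-k}=2^{k-2j}(-1)^{j-k}\binom{2j-k-1}{j-k}$, which combines to exactly the claimed value. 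The $j=k=0$ case drops out of the same generating function as the constant term $1$.

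The main obstacle is purely bookkeeping: handling the degenerate cases uniformly (the stated ``otherwise'' branch genuinely fails to produce $0$ for $j=0$, $k\ge 1$ because of the $1/j$, so the correct reading is that $S=0$ there and the first branch should be understood to cover all of $j=0$), and carefully tracking signs and the conversion between $\binom{-j}{j-k}$, the rising factorial $(1/2-t)_{t+1}$-style expressions used elsewhere in the paper, and the binomial $\binom{2j-k-1}{j-k}$ under the paper's convention $\binom{x}{m}=x(x-1)\cdots(x-m+1)/m!$ with $\binom{x}{m}=0$ for $m<0$. In particular $\binom{2j-k-1}{j-k}=0$ whenever $j<k$, which is exactly the vanishing $S(j,k)=0$ for $k>j$ predicted by the finite-difference argument, so the two approaches cross-check. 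Once the generating-function identity $\sum_j S(j,k)z^j=(1-\sqrt{1+z})^k$ is in hand, no deep input is needed — only Lagrange inversion and simplification — so I expect this lemma to be dispatched quickly, with the Pochhammer manipulation being the one place where the paper's authors likely invoke \texttt{Sigma} or do the algebra by hand.
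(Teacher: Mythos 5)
Your proposal is correct, but it takes a genuinely different route from the paper's. The paper treats $j=k=0$ as trivial and, for $j\neq 0$, applies the self-inverse binomial transform
\begin{equation*}
f(k)=\sum_{i=0}^k(-1)^i\binom{k}{i}g(i)\ \Longleftrightarrow\ g(k)=\sum_{i=0}^k(-1)^i\binom{k}{i}f(i)
\end{equation*}
to replace \eqref{PPP1} by the equivalent identity $\sum_{i=0}^k(-1)^{i+j}2^{i-2j}\frac{i}{j}\binom{k}{i}\binom{2j-i-1}{j-i}=\binom{k/2}{j}$, which it then leaves to ``any standard summation method, resp.\ algorithm'' (Zeilberger-style creative telescoping). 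The inversion buys a target identity whose right-hand side $\binom{k/2}{j}$ is algorithm-friendly, but the proof is not self-contained. Your route --- summing $\sum_j\binom{x/2}{j}z^j=(1+z)^{x/2}$ against $(-1)^i\binom{k}{i}$ to get $\sum_j S(j,k)z^j=\bigl(1-\sqrt{1+z}\,\bigr)^k$ and extracting $[z^j]$ by Lagrange inversion with $w=z\phi(w)$, $\phi(w)=-1/(2-w)$ --- actually \emph{derives} the closed form by hand and needs no computer algebra. Your auxiliary observations are also sound: the $j=0$ case via the binomial theorem, the automatic vanishing for $k>j$ (finite differences of a degree-$j$ polynomial, matching $\binom{2j-k-1}{j-k}=0$), and the remark that the ``otherwise'' branch is formally ill-defined at $j=0$, $k\geq 1$ because of the factor $k/j$; the paper's own proof likewise only addresses $j=k=0$ and $j\neq 0$, so your reading is the right one.

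One sign slip to repair when writing this up: in your chain for $[w^{j-k}](2-w)^{-j}$, the conversion $\binom{-j}{j-k}=(-1)^{j-k}\binom{2j-k-1}{j-k}$ cancels against the $(-1)^{j-k}$ already present, so the coefficient is $2^{k-2j}\binom{2j-k-1}{j-k}$ with \emph{no} residual sign; your displayed intermediate value $2^{k-2j}(-1)^{j-k}\binom{2j-k-1}{j-k}$ carries a spurious $(-1)^{j-k}$, which as written would propagate to $(-1)^k$ instead of $(-1)^j$ in the final answer. With the corrected coefficient, $[z^j]w^k=\frac{k}{j}(-1)^j2^{k-2j}\binom{2j-k-1}{j-k}$ is exactly the claimed value, so the argument goes through.
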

\begin{proof}
  The case $j=k=0$ is trivial. By the inversion relation
  $$f(k)=\sum_{i=0}^k(-1)^i\binom{k}{i}g(i) \Leftrightarrow g(k)=\sum_{i=0}^k (-1)^i\binom{k}{i}f(i),$$
  \eqref{PPP1} for $j\neq 0$ is equivalent to
  $$\sum_{i=0}^k(-1)^{i+j}2^{i-2j}\frac{i}{j}\binom{k}{i}\binom{2j-i-1}{j-i}=\binom{k/2}{j};$$
  which can be proved (and derived) by any standard summation method, resp. algorithm.
  \end{proof}
\begin{lemma}\label{lemsec1}
Let $A_1(n)$ be defined as in \eqref{eqn2}. Let $E_1(n)$ be as in Definition \ref{newdef1} and $O_1(n)$ as in Definition \ref{newdef2}. Then
\begin{equation}\label{eqn3}
A_1(n)= E_1\Bigl(\dfrac{1}{\sqrt{n}}\Bigr)+O_1\Bigl(\dfrac{1}{\sqrt{n}}\Bigr).
\end{equation}
\end{lemma}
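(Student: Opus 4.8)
The plan is to observe that $A_1(n)=e^{h}$ for a power series $h$ that, in the variable $x:=1/\sqrt n$, is \emph{odd}; then $\cosh h$ and $\sinh h$ are exactly the even and the odd parts of $A_1(n)$, and the statement reduces to identifying these two series coefficientwise with $E_1$ and $O_1$.

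First I would set $w:=x^2=1/n$ and rewrite the exponent of $A_1(n)$ as
\begin{equation*}
\pi\sqrt{\tfrac{2n}{3}}\left(\sqrt{1-\tfrac{1}{24n}}-1\right)=c\,\frac{\sqrt{1-w/24}-1}{\sqrt w}=c\sum_{k\ge1}\binom{1/2}{k}\left(-\tfrac1{24}\right)^{k}w^{k-1/2}=:h,\qquad c:=\pi\sqrt{\tfrac23},
\end{equation*}
which converges for $0<w\le1$ (its radius in $w$ is $24$) and, viewed as a function of $x=\sqrt w$, has only odd powers of $x$. Writing $u:=1-\sqrt{1-w/24}$ one has $\sum_{k\ge1}\binom{1/2}{k}(-1/24)^{k}w^{k}=-u$. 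Since $\cosh$ and $\sinh$ are entire, $\cosh h$ and $\sinh h$ are analytic in $x$ on $|x|<\sqrt{24}$, so it is enough to prove the coefficient identities $[x^{2t}]\cosh h=e_1(t)$ and $[x^{2t+1}]\sinh h=o_1(t)$; then $A_1(n)=e^{h}=\cosh h+\sinh h$ yields \eqref{eqn3}.

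For the even part, $h^{2m}=c^{2m}w^{-m}u^{2m}$ gives $[x^{2t}]\cosh h=\sum_{m\ge0}\frac{c^{2m}}{(2m)!}[w^{t+m}]u^{2m}$. Expanding $u^{2m}=(1-\sqrt{1-w/24})^{2m}$ by the binomial theorem twice turns $[w^{t+m}]u^{2m}$ into $(-1/24)^{t+m}\sum_{i}(-1)^{i}\binom{2m}{i}\binom{i/2}{t+m}$, and Lemma~\ref{PP1} (with its parameters $k=2m$, $j=t+m$) evaluates this in closed form. It then remains to check that, after elementary rewriting — expressing $(1/2-t)_{t+1}$ through a double factorial and using $(2t)!=2t\,(2t-1)!$ — the resulting sum over $m$ equals $e_1(t)$; the boundary case $t=0$ (the exceptional case of Lemma~\ref{PP1}) is handled directly since $\cosh 0=1=e_1(0)$. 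The odd part runs in exactly the same way, starting from $h^{2m+1}=-c^{2m+1}w^{-m-1/2}u^{2m+1}$ and applying Lemma~\ref{PP1} with $k=2m+1$, $j=t+m+1$.

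The only genuine work is this last matching of closed forms; it is routine Pochhammer/factorial manipulation of the kind that \texttt{Sigma} settles automatically. The points that need care are bookkeeping ones: the series for $h$ carries half-integer powers of $w$, so one should work throughout in $x$ (equivalently $\sqrt w$) rather than $w$; and the factor $k/j$ in Lemma~\ref{PP1} forces the cases $m=0$ and $t=0$ to be treated separately. I do not anticipate a real obstacle beyond organizing these cases cleanly.
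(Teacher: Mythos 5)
Your proposal is correct and is essentially the paper's own argument: both expand the exponential, apply the binomial theorem twice to $\bigl(\sqrt{1-\tfrac{1}{24n}}-1\bigr)^k$, and invoke Lemma~\ref{PP1} with the same parameter specializations ($k=2m$, $j=t+m$ for the even part and $k=2m+1$, $j=t+m+1$ for the odd part) before matching the resulting finite sums against $e_1(t)$ and $o_1(t)$. Your $\cosh h+\sinh h$ decomposition is just a repackaging of the paper's splitting of the index set $S$ into the parity classes $V(2t)$ and $V(2t+1)$, and the remaining Pochhammer bookkeeping you defer is exactly what the paper carries out in \eqref{eqn10}--\eqref{eqn16}.
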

\begin{proof}
From Equation \eqref{eqn2}, we get
\begin{eqnarray}\label{eqn4}
A_1(n) &=&e^{\pi\sqrt{2n/3}\ \bigl(\sqrt{1-\frac{1}{24n}}-1\bigr)}\nonumber\\
&=&\sum_{k=0}^{\infty}\dfrac{(\pi\sqrt{2n/3})^k}{k!}\Biggl(\sqrt{1-\frac{1}{24n}}-1\Biggr)^k\nonumber\\
&=& \sum_{k=0}^{\infty}\dfrac{(\pi\sqrt{2/3})^k}{k!}(\sqrt{n})^k\sum_{i=0}^{k}\binom{k}{i}(-1)^{k-i}\Biggl(\sqrt{1-\frac{1}{24n}}\Biggr)^i\nonumber\\
&=& \sum_{k=0}^{\infty}\dfrac{(\pi\sqrt{2/3})^k}{k!}(\sqrt{n})^k\sum_{i=0}^{k}\binom{k}{i}(-1)^{k-i} \sum_{j=0}^{\infty}\binom{i/2}{j}\dfrac{(-1)^j}{(24n)^j}\nonumber\\
&=& \sum_{k=0}^{\infty}\sum_{i=0}^{k}\sum_{j=0}^{\infty}\dfrac{(\pi\sqrt{2/3})^k}{k!} \dfrac{(-1)^{k-i+j}}{(24)^j}\binom{k}{i}\binom{i/2}{j}(\sqrt{n})^{k-2j}.
\end{eqnarray}
Define $S:=\bigl\{(k,i,j)\in \mathbb{Z}^3_{\geq 0} : 0 \leq i \leq k\bigr \}$. In order to express $A_1(n)$ in the form $\sum_{m=0}^{\infty}a_m(\frac{1}{\sqrt{n}})^m$, we split the set $S$ into a disjoint union of subsets; i.e., $S:=\underset{t \in \mathbb{Z}_{\geq 0}}{\bigcup}V(t)$, where for each $t\in\mathbb{Z}_{\geq 0}$,
$V(t):=\bigl\{(k,i,j)\in \mathbb{Z}^3_{\geq 0} : k-2j =-t \bigr\}.$

Notice that for $k>j$, by Lemma \ref{PP1}, $\sum_{i=0}^k\binom{k}{i}\binom{i/2}{j}=0$. Furthermore, for each element $r =(k,i,j)\in S$, we define 
\begin{equation*}
S(r):= \dfrac{(\pi\sqrt{2/3})^k}{k!} \dfrac{(-1)^{k-i+j}}{(24)^j}\binom{k}{i}\binom{i/2}{j}\ \ \text{and}\ \ f(r):=k-2j.
\end{equation*}
Rewrite \eqref{eqn4} as
\begin{eqnarray}\label{eqn5}
A_1(n) &=& \sum_{r \in S} S(r) (\sqrt{n})^{f(r)}= \sum_{t=0}^{\infty}\sum_{r\in V(t)} S(r) \Bigl(\dfrac{1}{\sqrt{n}}\Bigr)^t\nonumber\\
&=& \sum_{t=0}^{\infty}\underset{r\in V(2t)}{\sum} S(r) \Bigl(\dfrac{1}{\sqrt{n}}\Bigr)^{2t}+\sum_{t=0}^{\infty}\underset{r\in V(2t+1)}{\sum} S(r) \Bigl(\dfrac{1}{\sqrt{n}}\Bigr)^{2t+1}.
\end{eqnarray}
Now
\begin{eqnarray}\label{eqn6}
V(2t) &=& \bigl\{(k,i,j)\in S : k-2j=-2t\bigr\}\nonumber\\
&=& \bigl\{(k,i,j)\in S :k \equiv 0\ (\text{mod}\ 2)\ \text{and}\  k-2j=-2t\bigr\}\nonumber\\
&=& \bigl\{(2u,i,j)\in S :  j=u+t\bigr\}= \bigl\{(2u,i,u+t)\in \mathbb{Z}^3_{\geq 0} : 0 \leq i \leq 2u\bigr\}.
\end{eqnarray}
From \eqref{eqn6}, it follows that
\begin{eqnarray}\label{eqn7}
& &\sum_{t=0}^{\infty}\underset{r\in V(2t)}{\sum} S(r) \Bigl(\dfrac{1}{\sqrt{n}}\Bigr)^{2t}\nonumber\\
&=&  \sum_{t=0}^{\infty} \dfrac{(-1)^t}{(24)^t}\Biggl(\sum_{u=0}^{\infty}\dfrac{(2\pi^2/3)^u}{(2u)!}\dfrac{(-1)^u}{(24)^u}\sum_{i=0}^{2u}(-1)^i \binom{2u}{i}\binom{i/2}{u+t}\Biggr)\Bigl(\dfrac{1}{\sqrt{n}}\Bigr)^{2t}\nonumber\\
&=&  \sum_{t=0}^{\infty} \dfrac{(-1)^t}{(24)^t}\Biggl(\sum_{u=0}^{\infty}\dfrac{(-1)^u}{(2u)!}\Bigl(\dfrac{\pi}{6}\Bigr)^{2u}\underset{:=\mathcal{E}_1(u,t)}{\underbrace{\sum_{i=0}^{2u}(-1)^i \binom{2u}{i}\binom{i/2}{u+t}}}\Biggr)\Bigl(\dfrac{1}{\sqrt{n}}\Bigr)^{2t}.\nonumber\\
\end{eqnarray}
By Lemma \ref{PP1},
$$\mathcal{E}_1(u,t)=\left\{\begin{array}{cc} 1, & \text{if $u=t=0$}\\ 0, & \text{if $u>t$}\\ \frac{2u(1/2-t)_{t+1}(-t)_u}{t(t+u)!}, &  \text{otherwise}\end{array}\right..$$
Consequently, for all $t\geq 1$, 
\begin{equation}\label{eqn10}
\sum_{u=0}^{t}\dfrac{(-1)^u}{(2u)!}\Bigl(\dfrac{\pi}{6}\Bigr)^{2u}\mathcal{E}_1(u,t) = \dfrac{(1/2-t)_{t+1}}{t}\sum_{u=1}^{t}\dfrac{(-1)^u (-t)_u}{(t+u)! (2u-1)!}\Bigl(\dfrac{\pi^2}{36}\Bigr)^u.
\end{equation}
It follows that
\begin{equation}\label{eqn11}
\begin{split}
\sum_{t=0}^{\infty}\underset{r\in V(2t)}{\sum} S(r) \Bigl(\dfrac{1}{\sqrt{n}}\Bigr)^{2t} &=1+ \sum_{t=1}^{\infty} \Biggl(\dfrac{(-1)^t}{(24)^t}\dfrac{(1/2-t)_{t+1}}{t}\sum_{u=1}^{t}\dfrac{(-1)^u (-t)_u}{(t+u)! (2u-1)!}\Bigl(\dfrac{\pi^2}{36}\Bigr)^u\Biggr)\Bigl(\dfrac{1}{\sqrt{n}}\Bigr)^{2t}\\
&= E_1\Bigl(\dfrac{1}{\sqrt{n}}\Bigr).
\end{split}
\end{equation}
Similar to \eqref{eqn6}, we have
\begin{equation}\label{eqn12}
V(2t+1) = \bigl\{(2u+1,i,u+t+1)\in \mathbb{Z}^3_{\geq 0}: 0\leq i \leq 2u+1\bigr \},
\end{equation}
and consequently, it follows that
\begin{eqnarray}\label{eqn13}
& &\sum_{t=0}^{\infty}\underset{r\in V(2t+1)}{\sum} S(r) \Bigl(\dfrac{1}{\sqrt{n}}\Bigr)^{2t+1}\nonumber\\
&=& \sum_{t=0}^{\infty} \dfrac{(-1)^t}{(24)^t}\Biggl(\sum_{u=0}^{\infty}\dfrac{(\pi \sqrt{2/3})^{2u+1}}{(2u+1)!}\dfrac{(-1)^u}{(24)^{u+1}}\underset{:=\mathcal{O}_1(u,t)}{\underbrace{\sum_{i=0}^{2u+1}(-1)^i \binom{2u+1}{i}\binom{i/2}{u+t+1}}}\Biggr)\Bigl(\dfrac{1}{\sqrt{n}}\Bigr)^{2t+1}.\nonumber\\
\end{eqnarray}
By Lemma \ref{PP1},
$$\mathcal{O}_1(u,t)=\left\{\begin{array}{cc} 0, & \text{if $u>t$}\\ -\frac{(2u+1)(1/2-t)_{t+1}(-t)_u}{(t+u+1)!}, & \text{otherwise}\end{array}\right..$$
It follows that
\begin{eqnarray}\label{eqn16}
& &\sum_{t=0}^{\infty}\underset{r\in V(2t+1)}{\sum} S(r) \Bigl(\dfrac{1}{\sqrt{n}}\Bigr)^{2t+1}\nonumber\\
& &=-\dfrac{\pi}{12\sqrt{6}} \sum_{t=0}^{\infty} \Biggl(\dfrac{(-1)^t(1/2-t)_{t+1}}{(24)^t}\sum_{u=0}^{t}\dfrac{(-1)^u(-t)_u}{(t+u+1)!(2u)!}\Bigl(\dfrac{\pi^2}{36}\Bigr)^u\Biggr)\Bigl(\dfrac{1}{\sqrt{n}}\Bigr)^{2t+1}\nonumber\\
& &= O_1\Bigl(\dfrac{1}{\sqrt{n}}\Bigr).
\end{eqnarray}
From \eqref{eqn5}, \eqref{eqn11}, and \eqref{eqn16}, we get \eqref{eqn3}.
\end{proof}
\begin{definition}\label{newdef3}
For $t \in \mathbb{Z}_{\geq 0}$, define
\begin{equation}\label{eqn18}
E_2\Bigl(\dfrac{1}{\sqrt{n}}\Bigr):=\sum_{t=0}^{\infty}e_2(t) \Bigl(\dfrac{1}{\sqrt{n}}\Bigr)^{2t}\ \text{with}\ \  e_2(t):=\dfrac{1}{(24)^t}.
\end{equation}
\end{definition}
\begin{definition}\label{newdef4}
	For $t \in \mathbb{Z}_{\geq 0}$, define
\begin{equation}\label{eqn19}
O_2\Bigl(\dfrac{1}{\sqrt{n}}\Bigr):=\sum_{t=0}^{\infty}o_2(t) \Bigl(\dfrac{1}{\sqrt{n}}\Bigr)^{2t+1}\ \text{with}\ \  o_2(t):=-\dfrac{6}{\pi\sqrt{24}}\binom{-3/2}{t}\dfrac{(-1)^t}{(24)^t}.
\end{equation}
\end{definition}
\begin{lemma}\label{lemsec2}
Let $A_2(n)$ be defined as in \eqref{eqn2}. Let $E_2(n)$ be as in Definition \ref{newdef3} and $O_2(n)$ as in Definition \ref{newdef4}. Then	
	\begin{equation}\label{eqn17}
	A_2(n)= E_2\Bigl(\dfrac{1}{\sqrt{n}}\Bigr)+O_2\Bigl(\dfrac{1}{\sqrt{n}}\Bigr).
	\end{equation}
\end{lemma}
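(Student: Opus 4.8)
The plan is to reduce everything to two elementary binomial expansions, since $A_2(n)$ is just a product of two explicit algebraic functions of $1/n$. First I would rewrite $\mu(n)$ in terms of $\sqrt n$: from $\mu(n)=\frac{\pi}{6}\sqrt{24n-1}=\frac{\pi}{6}\sqrt{24n}\,\sqrt{1-\frac{1}{24n}}=\pi\sqrt{\tfrac{2n}{3}}\,\sqrt{1-\frac{1}{24n}}$ (using $\frac{\pi}{6}\sqrt{24}=\frac{\pi\sqrt 6}{3}=\pi\sqrt{2/3}$), one gets
$$\frac{1}{\mu(n)}=\frac{6}{\pi\sqrt{24}}\cdot\frac{1}{\sqrt n}\,\Bigl(1-\frac{1}{24n}\Bigr)^{-1/2}.$$
Substituting this into $A_2(n)=\bigl(1-\tfrac{1}{24n}\bigr)^{-1}\bigl(1-\tfrac{1}{\mu(n)}\bigr)$ and distributing yields
$$A_2(n)=\Bigl(1-\frac{1}{24n}\Bigr)^{-1}-\frac{6}{\pi\sqrt{24}}\cdot\frac{1}{\sqrt n}\,\Bigl(1-\frac{1}{24n}\Bigr)^{-3/2}.$$

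Next, since $0<\frac{1}{24n}\le\frac{1}{24}<1$ for all $n\ge 1$, both factors have absolutely convergent binomial expansions. The geometric series gives $\bigl(1-\tfrac{1}{24n}\bigr)^{-1}=\sum_{t\ge 0}\frac{1}{24^{t}}\bigl(\frac{1}{\sqrt n}\bigr)^{2t}$, which is exactly $E_2\bigl(\frac{1}{\sqrt n}\bigr)$ by Definition \ref{newdef3}. The general binomial series gives $\bigl(1-\tfrac{1}{24n}\bigr)^{-3/2}=\sum_{t\ge 0}\binom{-3/2}{t}\frac{(-1)^t}{24^{t}}\bigl(\frac{1}{\sqrt n}\bigr)^{2t}$; multiplying by $-\frac{6}{\pi\sqrt{24}}\cdot\frac{1}{\sqrt n}$ and comparing with Definition \ref{newdef4} identifies the second term with $O_2\bigl(\frac{1}{\sqrt n}\bigr)=\sum_{t\ge 0}o_2(t)\bigl(\frac{1}{\sqrt n}\bigr)^{2t+1}$, where $o_2(t)=-\frac{6}{\pi\sqrt{24}}\binom{-3/2}{t}\frac{(-1)^t}{24^t}$. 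Adding the two identities gives \eqref{eqn17}.

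I expect no real obstacle: this lemma is much easier than Lemma \ref{lemsec1}, requires no symbolic summation, and the only point deserving an explicit word is the legitimacy of the (absolutely convergent) binomial expansions for $n\ge 1$. The one piece of bookkeeping to watch is the half-power index shift between the even exponents $2t$ appearing in the binomial expansion of $\bigl(1-\tfrac{1}{24n}\bigr)^{-3/2}$ and the odd exponents $2t+1$ in the definition of $O_2$, which is accounted for precisely by the factor $\frac{1}{\sqrt n}$ that was pulled out of $1/\mu(n)$.
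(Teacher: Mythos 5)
Your proposal is correct and follows essentially the same route as the paper: rewrite $1/\mu(n)=\frac{6}{\pi\sqrt{24}}\frac{1}{\sqrt n}\bigl(1-\frac{1}{24n}\bigr)^{-1/2}$, distribute to get $A_2(n)=\bigl(1-\frac{1}{24n}\bigr)^{-1}-\frac{6}{\pi\sqrt{24}}\frac{1}{\sqrt n}\bigl(1-\frac{1}{24n}\bigr)^{-3/2}$, and expand each factor by the geometric and binomial series to match Definitions \ref{newdef3} and \ref{newdef4}. Your added remarks on absolute convergence and the index bookkeeping are harmless refinements of the same argument.
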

\begin{proof}
Recall the definition of $A_2(n)$ from \eqref{eqn2} and expand it in the following way: 
\begin{eqnarray}\label{eqn20}
A_2(n) &=& \Bigl(1-\dfrac{1}{24n}\Bigr)^{-1}\Bigl(1-\dfrac{1}{\mu(n)}\Bigr)= \Bigl(1-\dfrac{1}{24n}\Bigr)^{-1}-\dfrac{6}{\pi\sqrt{24}}\dfrac{1}{\sqrt{n}}\Bigl(1-\dfrac{1}{24n}\Bigr)^{-3/2}\nonumber\\
&=& \sum_{t=0}^{\infty}\dfrac{1}{(24)^t}\Bigl(\dfrac{1}{\sqrt{n}}\Bigr)^{2t}-\dfrac{6}{\pi\sqrt{24}}\sum_{t=0}^{\infty}\binom{-3/2}{t}\dfrac{(-1)^t}{(24)^t} \Bigl(\dfrac{1}{\sqrt{n}}\Bigr)^{2t+1} \nonumber\\
 &=& E_2\Bigl(\dfrac{1}{\sqrt{n}}\Bigr)+O_2\Bigl(\dfrac{1}{\sqrt{n}}\Bigr).
\end{eqnarray}
This completes the proof of \eqref{eqn17}.
\end{proof}
\begin{definition}\label{newdef5}
In view of the Definitions \ref{newdef1}-\ref{newdef4}, we define
\begin{equation}\label{eqn22}
S_{e,1}\Bigl(\dfrac{1}{\sqrt{n}}\Bigr):=E_1\Bigl(\dfrac{1}{\sqrt{n}}\Bigr)E_2\Bigl(\dfrac{1}{\sqrt{n}}\Bigr),
\end{equation}
\begin{equation}\label{eqn23}
S_{e,2}\Bigl(\dfrac{1}{\sqrt{n}}\Bigr):=O_1\Bigl(\dfrac{1}{\sqrt{n}}\Bigr)O_2\Bigl(\dfrac{1}{\sqrt{n}}\Bigr),
\end{equation}
\begin{equation}\label{eqn24}
S_{o,1}\Bigl(\dfrac{1}{\sqrt{n}}\Bigr):=E_1\Bigl(\dfrac{1}{\sqrt{n}}\Bigr)O_2\Bigl(\dfrac{1}{\sqrt{n}}\Bigr),
\end{equation}
and
\begin{equation}\label{eqn25}
S_{o,2}\Bigl(\dfrac{1}{\sqrt{n}}\Bigr):=E_2\Bigl(\dfrac{1}{\sqrt{n}}\Bigr)O_1\Bigl(\dfrac{1}{\sqrt{n}}\Bigr).
\end{equation}	
\end{definition}

\begin{lemma}\label{newlemma1}
For each $i \in \{1,2\}$, let $S_{e,i}\Bigl(\dfrac{1}{\sqrt{n}}\Bigr)$ and $ S_{o,i}\Bigl(\dfrac{1}{\sqrt{n}}\Bigr)$ be as in Definition \ref{newdef5}. Then
\begin{equation}\label{neweqn1}
\dfrac{\sqrt{12}\ e^{\mu(n)}}{24n-1}\Bigl(1-\dfrac{1}{\mu(n)}\Bigr)=\dfrac{1}{4n\sqrt{3}}e^{\pi\sqrt{2n/3}} \sum_{i=1}^{2}\Biggl(S_{e,i}\Bigl(\dfrac{1}{\sqrt{n}}\Bigr)+S_{o,i}\Bigl(\dfrac{1}{\sqrt{n}}\Bigr)\Biggr).
\end{equation}
\end{lemma}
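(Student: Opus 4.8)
The plan is to multiply out the factorization already recorded in \eqref{eqn2} and then invoke Lemmas \ref{lemsec1} and \ref{lemsec2}. By \eqref{eqn2},
\begin{equation*}
\dfrac{\sqrt{12}\ e^{\mu(n)}}{24n-1}\Bigl(1-\dfrac{1}{\mu(n)}\Bigr)=\dfrac{1}{4n\sqrt{3}}e^{\pi\sqrt{2n/3}}\,A_1(n)\,A_2(n),
\end{equation*}
so it suffices to prove $A_1(n)\,A_2(n)=\sum_{i=1}^{2}\bigl(S_{e,i}(1/\sqrt{n})+S_{o,i}(1/\sqrt{n})\bigr)$. Lemma \ref{lemsec1} gives $A_1(n)=E_1(1/\sqrt{n})+O_1(1/\sqrt{n})$ and Lemma \ref{lemsec2} gives $A_2(n)=E_2(1/\sqrt{n})+O_2(1/\sqrt{n})$. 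Expanding the product of these two sums into its four cross terms and comparing with Definition \ref{newdef5} shows that the cross terms are exactly $S_{e,1}=E_1E_2$, $S_{e,2}=O_1O_2$, $S_{o,1}=E_1O_2$, and $S_{o,2}=E_2O_1$ (all series evaluated at $1/\sqrt{n}$); collecting them yields \eqref{neweqn1}.

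The only step needing genuine justification is that this termwise manipulation is legitimate for the actual functions and not merely at the level of formal power series. For this I would note that, writing $x=1/\sqrt{n}$, each residue factor in \eqref{eqn2}, namely $A_1(n)=e^{\pi\sqrt{2/3}\,x^{-1}(\sqrt{1-x^2/24}-1)}$ and $A_2(n)=(1-x^2/24)^{-1}(1-\tfrac{6}{\pi\sqrt{24}}x(1-x^2/24)^{-1/2})$, is real-analytic in $x$ on the range $n\ge1$: the binomial series $(1-x^2/24)^{\alpha}$ converge absolutely there because $x^2/24<1$, the exponent appearing in $A_1(n)$ is an (odd) power series in $x$ vanishing at $x=0$, and composing it with the entire function $e^{z}$ preserves absolute convergence. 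Hence $E_1,O_1$ (the even and odd parts of $A_1$) and $E_2,O_2$ (the even and odd parts of $A_2$) are absolutely convergent power series in $x$ for all $n\ge1$, their pairwise Cauchy products converge absolutely, the distributive law $(E_1+O_1)(E_2+O_2)=E_1E_2+E_1O_2+O_1E_2+O_1O_2$ holds for the functions, and the rearrangement of the resulting double sum into the four single sums $S_{e,i}$, $S_{o,i}$ is valid (e.g. by Mertens' theorem).

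I expect this convergence/rearrangement remark to be the only obstacle; once it is in place the identity is pure bookkeeping driven by Lemmas \ref{lemsec1}, \ref{lemsec2} and Definition \ref{newdef5}. In the writeup I would state the common domain of convergence once at the outset, carry out the four-term expansion, and match the terms against \eqref{eqn22}--\eqref{eqn25}.
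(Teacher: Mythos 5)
Your proposal is correct and follows essentially the same route as the paper, which simply applies Lemmas \ref{lemsec1} and \ref{lemsec2} to the factorization \eqref{eqn2} and matches the four cross terms against Definition \ref{newdef5}. The additional convergence/rearrangement remark you supply is sound but not something the paper spells out.
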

\begin{proof}
The proof follows immediately by applying Lemmas \ref{lemsec1} and \ref{lemsec2} to \eqref{eqn2}.
\end{proof}
\begin{definition}\label{newdef6}
		For $t \in \mathbb{Z}_{\geq 0}$, define
\begin{equation}\label{eqn28new}
S_1(t):=\sum_{s=1}^{t}\dfrac{(-1)^s (1/2-s)_{s+1}}{s}\sum_{u=1}^{s}\dfrac{(-1)^u (-s)_u}{(s+u)! (2u-1)!}\Bigl(\dfrac{\pi^2}{36}\Bigr)^u,
\end{equation}
and
\begin{equation}\label{eqn28}
\hspace{-5.5 cm} g_{e,1}(t) := \dfrac{1}{(24)^t}\Bigl(1+S_1(t)\Bigr).
\end{equation}	
\end{definition}
\begin{lemma}\label{lemsec3}
Let $S_{e,1}\Bigl(\dfrac{1}{\sqrt{n}}\Bigr)$ be as in \eqref{eqn22}. Let $g_{e,1}(t)$ be as in Definition \ref{newdef6}. Then
\begin{equation}\label{eqn26}
S_{e,1}\Bigl(\dfrac{1}{\sqrt{n}}\Bigr) =\sum_{t=0}^{\infty}g_{e,1}(t) \Bigl(\dfrac{1}{\sqrt{n}}\Bigr)^{2t}.
\end{equation}
\end{lemma}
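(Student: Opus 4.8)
The plan is to obtain $S_{e,1}(1/\sqrt n)=E_1(1/\sqrt n)\,E_2(1/\sqrt n)$ as a power series in $1/\sqrt n$ by a Cauchy product and then to read off its coefficients. Both $E_1(1/\sqrt n)=\sum_{t\ge 0}e_1(t)(1/\sqrt n)^{2t}$ and $E_2(1/\sqrt n)=\sum_{t\ge 0}e_2(t)(1/\sqrt n)^{2t}$ are power series in the single quantity $1/\sqrt n$ containing only even powers; for every fixed $n\ge 1$ both converge absolutely (indeed $E_2$ is a geometric series, and the coefficients $e_1(t)$ are dominated by $24^{-t}$ times a bounded hypergeometric-type inner sum), so multiplying them term by term is legitimate — alternatively one may read the whole identity formally in $\mathbb{Q}(\pi)[[x]]$ with $x=1/\sqrt n$. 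In either reading,
\begin{equation*}
S_{e,1}\!\left(\tfrac{1}{\sqrt n}\right)=\sum_{t=0}^{\infty}\left(\sum_{s=0}^{t}e_1(s)\,e_2(t-s)\right)\!\left(\tfrac{1}{\sqrt n}\right)^{2t},
\end{equation*}
so it suffices to prove $\sum_{s=0}^{t}e_1(s)e_2(t-s)=g_{e,1}(t)$ for all $t\in\mathbb{Z}_{\ge 0}$.

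For the coefficient identity I would insert $e_2(t-s)=24^{-(t-s)}$ from \eqref{eqn18} and factor out $24^{-t}$, giving $\sum_{s=0}^{t}e_1(s)e_2(t-s)=24^{-t}\sum_{s=0}^{t}24^{s}e_1(s)$. Now evaluate $24^{s}e_1(s)$ termwise via Definition \ref{newdef1}: for $s=0$ one gets $1$, while for $s\ge 1$ the factor $24^{s}$ cancels the $(24)^{-s}$ in \eqref{defeqn0}, leaving precisely $\dfrac{(-1)^{s}(1/2-s)_{s+1}}{s}\sum_{u=1}^{s}\dfrac{(-1)^{u}(-s)_{u}}{(s+u)!\,(2u-1)!}\left(\dfrac{\pi^{2}}{36}\right)^{u}$. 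Summing over $s=0,1,\dots,t$ and comparing with \eqref{eqn28new} yields $\sum_{s=0}^{t}24^{s}e_1(s)=1+S_1(t)$, hence $\sum_{s=0}^{t}e_1(s)e_2(t-s)=24^{-t}\bigl(1+S_1(t)\bigr)=g_{e,1}(t)$ by \eqref{eqn28}. This is exactly \eqref{eqn26}.

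No genuine difficulty arises here: the argument is a single Cauchy product followed by a cancellation of the powers of $24$ and a direct appeal to the definition of $S_1(t)$. The only point deserving a remark is the justification of the term-by-term multiplication of the two series, which is immediate from the absolute convergence noted above (or trivial in the formal-power-series reading). All remaining steps are the routine bookkeeping indicated.
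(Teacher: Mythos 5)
Your proof is correct and takes essentially the same route as the paper: form the Cauchy product of $E_1$ and $E_2$, factor $24^{-t}$ out of the convolution coefficient using $e_2(t-s)=24^{-(t-s)}$, and recognize the remaining sum $\sum_{s=0}^{t}24^{s}e_1(s)$ as $1+S_1(t)$. The paper's proof is the same computation with the $s=0$ and $s=t$ terms of the convolution written out separately, so there is nothing further to add.
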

\begin{proof}
From \eqref{defeqn1}, \eqref{eqn18}, and \eqref{eqn22}, we have
\begin{eqnarray}\label{eqn29}
S_{e,1}\Bigl(\dfrac{1}{\sqrt{n}}\Bigr) & = & E_1\Bigl(\dfrac{1}{\sqrt{n}}\Bigr)E_2\Bigl(\dfrac{1}{\sqrt{n}}\Bigr)= \Biggl(1+\sum_{t=1}^{\infty}e_1(t) \Bigl(\dfrac{1}{\sqrt{n}}\Bigr)^{2t}\Biggr) \Biggl(1+\sum_{t=1}^{\infty}e_2(t) \Bigl(\dfrac{1}{\sqrt{n}}\Bigr)^{2t}\Biggr)\nonumber\\
&=& 1+ \sum_{t=1}^{\infty} \Bigl(e_1(t)+e_2(t)\Bigr)\Bigl(\dfrac{1}{\sqrt{n}}\Bigr)^{2t}+ \sum_{t=2}^{\infty} \Biggl(\sum_{s=1}^{t-1}e_1(s)e_2(t-s)\Biggr)\Bigl(\dfrac{1}{\sqrt{n}}\Bigr)^{2t}\nonumber\\
&=& 1+ \sum_{t=1}^{\infty} \Biggl(e_1(t)+e_2(t)+\sum_{s=1}^{t-1}e_1(s)e_2(t-s)\Biggr)\Bigl(\dfrac{1}{\sqrt{n}}\Bigr)^{2t}.\nonumber\\
\end{eqnarray}
Combining \eqref{defeqn0} and \eqref{eqn18}, we obtain
\begin{eqnarray}\label{eqn30}
\hspace{-0.5 cm} e_1(t)+e_2(t)+\sum_{s=1}^{t-1}e_1(s)e_2(t-s)\hspace{-0.25 cm}&=&\hspace{-0.25 cm}\dfrac{(-1)^t(1/2-t)_{t+1}}{(24)^t\ t} \sum_{u=1}^{t}\dfrac{(-1)^u (-t)_u}{(t+u)! (2u-1)!}\Bigl(\dfrac{\pi^2}{36}\Bigr)^u+\dfrac{1}{(24)^t}\nonumber\\
&+&\hspace{-0.25 cm}\frac{1}{24^t}\sum_{s=1}^{t-1}\Biggl(\dfrac{(-1)^s (1/2-s)_{s+1}}{ s}\sum_{u=1}^{s}\dfrac{(-1)^u (-s)_u}{(s+u)! (2u-1)!}\Bigl(\dfrac{\pi^2}{36}\Bigr)^u\Biggr)\nonumber\\
  &=&\dfrac{1}{(24)^t}\Bigl(1+S_1(t)\Bigr)= g_{e,1}(t),
\end{eqnarray}
which concludes the proof of \eqref{eqn26}.
\end{proof}

\begin{definition}\label{newdef7}
	For $t \in \mathbb{Z}_{\geq 1}$, define
	\begin{equation}\label{eqn32new}
	S_2(t):=\sum_{s=0}^{t-1}(1/2-s)_{s+1}\binom{-3/2}{t-s-1}\sum_{u=0}^{s}\dfrac{(-1)^u(-s)_u}{(s+u+1)! (2u)!}\Bigl(\dfrac{\pi^2}{36}\Bigr)^{u},
	\end{equation}
	and
	\begin{equation}\label{eqn32}
\hspace{-7.5 cm}	g_{e,2}(t) := \dfrac{(-1)^{t-1}}{(24)^t}S_2(t).
	\end{equation}	
\end{definition}

\begin{lemma}\label{lemsec4}
Let $S_{e,2}\Bigl(\dfrac{1}{\sqrt{n}}\Bigr)$ as in \eqref{eqn23} and $g_{e,2}(t)$ as in Definition \ref{newdef7}. Then	
	\begin{equation}\label{eqn31}
	S_{e,2}\Bigl(\dfrac{1}{\sqrt{n}}\Bigr) =\sum_{t=1}^{\infty}g_{e,2}(t) \Bigl(\dfrac{1}{\sqrt{n}}\Bigr)^{2t}.
	\end{equation}
	\end{lemma}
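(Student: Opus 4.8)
The plan is to follow the same scheme as in the proof of Lemma~\ref{lemsec3}, but now for a product of two \emph{odd} series. Writing $x=1/\sqrt n$, recall from Definitions~\ref{newdef2} and~\ref{newdef4} that $O_1(x)=\sum_{s\ge 0}o_1(s)\,x^{2s+1}$ and $O_2(x)=\sum_{m\ge 0}o_2(m)\,x^{2m+1}$. I would first note that both series converge absolutely for $n\ge 1$: indeed $O_2(x)$ is, up to the factor $x$, the binomial series for $(1-x^2/24)^{-3/2}$, which has radius of convergence $\sqrt{24}>1$, and $O_1(x)$ is the odd part of the function $A_1$ from~\eqref{eqn2}, which is analytic in $|x|<\sqrt{24}$. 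Hence forming the Cauchy product and rearranging is legitimate for all $n\ge 1$.

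Multiplying the two series, the pair $(s,m)$ contributes to the power $x^{(2s+1)+(2m+1)}=x^{2(s+m+1)}$; collecting the pairs with $s+m+1=t$, i.e.\ $m=t-1-s$ with $0\le s\le t-1$, gives
\begin{equation*}
S_{e,2}\Bigl(\tfrac{1}{\sqrt n}\Bigr)=O_1\Bigl(\tfrac{1}{\sqrt n}\Bigr)O_2\Bigl(\tfrac{1}{\sqrt n}\Bigr)=\sum_{t=1}^{\infty}\Biggl(\sum_{s=0}^{t-1}o_1(s)\,o_2(t-1-s)\Biggr)\Bigl(\tfrac{1}{\sqrt n}\Bigr)^{2t}.
\end{equation*}
So the lemma reduces to the identity $\sum_{s=0}^{t-1}o_1(s)\,o_2(t-1-s)=g_{e,2}(t)$ for every $t\ge 1$.

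To prove this identity I would substitute the closed forms~\eqref{defeqn2} and~\eqref{eqn19} and simplify the resulting summand. The scalar prefactors combine as $\bigl(\tfrac{\pi}{12\sqrt 6}\bigr)\bigl(\tfrac{6}{\pi\sqrt{24}}\bigr)=\tfrac{1}{24}$ (since $\sqrt 6\,\sqrt{24}=12$), the two sign factors give $(-1)^{s}(-1)^{t-1-s}=(-1)^{t-1}$, and the powers of $24$ combine to $(24)^{-(t-1)}$. Thus
\begin{equation*}
o_1(s)\,o_2(t-1-s)=\frac{(-1)^{t-1}}{(24)^t}\,(1/2-s)_{s+1}\binom{-3/2}{t-1-s}\sum_{u=0}^{s}\frac{(-1)^u(-s)_u}{(s+u+1)!\,(2u)!}\Bigl(\frac{\pi^2}{36}\Bigr)^{u},
\end{equation*}
and summing over $s=0,\dots,t-1$ yields precisely $\dfrac{(-1)^{t-1}}{(24)^t}S_2(t)=g_{e,2}(t)$, by~\eqref{eqn32new} and~\eqref{eqn32}, which finishes the argument.

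There is no genuinely hard step here; the computation is routine once the Cauchy product is set up. The two points that need a little care are the index shift $m=t-1-s$ forced by the odd exponents $2s+1$ and $2m+1$, and the exact cancellation of $\pi$, $\sqrt 6$, and $\sqrt{24}$ into the constant $\tfrac{1}{24}$. In contrast to Lemma~\ref{PP1}, no symbolic-summation machinery is needed at this point, since $S_2(t)$ is by definition exactly the Cauchy convolution that appears.
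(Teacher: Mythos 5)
Your proof is correct and follows essentially the same route as the paper: form the Cauchy product of $O_1$ and $O_2$, shift the index so that the exponent $2(s+m+1)$ becomes $2t$, and observe that the convolution $\sum_{s=0}^{t-1}o_1(s)o_2(t-1-s)$ collapses to $g_{e,2}(t)$ by the definitions \eqref{defeqn2}, \eqref{eqn19}, \eqref{eqn32new}, and \eqref{eqn32}. The only difference is that you spell out the constant cancellation $\bigl(\tfrac{\pi}{12\sqrt6}\bigr)\bigl(\tfrac{6}{\pi\sqrt{24}}\bigr)=\tfrac{1}{24}$ and the convergence justification, which the paper leaves implicit.
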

\begin{proof}
From \eqref{defeqn3}, \eqref{eqn17} and \eqref{eqn23}, we have
\begin{eqnarray}\label{eqn33}
S_{e,2}\Bigl(\dfrac{1}{\sqrt{n}}\Bigr) & = & O_1\Bigl(\dfrac{1}{\sqrt{n}}\Bigr)O_2\Bigl(\dfrac{1}{\sqrt{n}}\Bigr)\nonumber\\
&=& \Biggl(\sum_{t=0}^{\infty}o_1(t) \Bigl(\dfrac{1}{\sqrt{n}}\Bigr)^{2t+1}\Biggr) \Biggl(\sum_{t=0}^{\infty}o_2(t) \Bigl(\dfrac{1}{\sqrt{n}}\Bigr)^{2t+1}\Biggr)\nonumber\\
&=& \sum_{t=1}^{\infty} \Biggl(\sum_{s=0}^{t-1}o_1(s)o_2(t-s-1)\Biggr)\Bigl(\dfrac{1}{\sqrt{n}}\Bigr)^{2t}\nonumber\\
&=& \sum_{t=1}^{\infty} g_{e,2}(t) \Bigl(\dfrac{1}{\sqrt{n}}\Bigr)^{2t}\ (\text{by}\ \ \eqref{defeqn2}\ \text{and}\ \eqref{eqn19}).
\end{eqnarray}
\end{proof}

\begin{definition}\label{newdef8}
	For $t \in \mathbb{Z}_{\geq 2}$, define
\begin{equation}\label{eqn36new}
S_3(t):=\sum_{s=1}^{t}\dfrac{ (1/2-s)_{s+1}\binom{-3/2}{t-s}}{s}\sum_{u=1}^{s}\dfrac{(-1)^u (-s)_u}{(s+u)! (2u-1)!}\Bigl(\dfrac{\pi^2}{36}\Bigr)^u,
\end{equation}
and
\begin{equation}\label{eqn36}
\hspace{-1.3 cm} g_{o,1}(t):=
\begin{cases}
-\dfrac{6}{\pi \sqrt{24}}\dfrac{(-1)^t}{(24)^t}\biggl(\binom{-3/2}{t}+S_3(t)\biggr), & \quad \hspace{-0.5 cm} \text{if\ $t \geq 2$}\\
-\dfrac{432+\pi^2}{2304\sqrt{6}\pi}, & \quad \hspace{-0.5 cm} \text{if\ $t = 1$}\\
-\dfrac{6}{\pi\sqrt{24}}, & \quad \hspace{-0.5 cm} \text{if\ $t = 0$}	
\end{cases}.
\end{equation}
\end{definition}
\begin{lemma}\label{lemsec5}
Let $S_{o,1}\Bigl(\dfrac{1}{\sqrt{n}}\Bigr)$ as in \eqref{eqn24} and $g_{o,1}(t)$ be as in Definition \ref{newdef8}. Then	
	\begin{equation}\label{eqn34}
	S_{o,1}\Bigl(\dfrac{1}{\sqrt{n}}\Bigr) =\sum_{t=0}^{\infty}g_{o,1}(t) \Bigl(\dfrac{1}{\sqrt{n}}\Bigr)^{2t+1}.
	\end{equation}
\end{lemma}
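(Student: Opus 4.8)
The plan is to mimic the Cauchy-product argument already used for $g_{e,1}(t)$ in Lemma \ref{lemsec3} and for $g_{e,2}(t)$ in Lemma \ref{lemsec4}. First I would recall from \eqref{eqn24} that $S_{o,1}(1/\sqrt{n})=E_1(1/\sqrt{n})\,O_2(1/\sqrt{n})$, and from Definitions \ref{newdef1} and \ref{newdef4} that $E_1(1/\sqrt{n})=\sum_{t\ge0}e_1(t)(1/\sqrt{n})^{2t}$ has only even powers while $O_2(1/\sqrt{n})=\sum_{t\ge0}o_2(t)(1/\sqrt{n})^{2t+1}$ has only odd powers. Multiplying the two series, the coefficient of $(1/\sqrt{n})^{2t+1}$ is the convolution
\begin{equation*}
g_{o,1}(t)=\sum_{s=0}^{t}e_1(s)\,o_2(t-s),
\end{equation*}
so the content of the lemma is precisely to verify that this convolution equals the piecewise expression in \eqref{eqn36}.

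Next I would separate the sum at $s=0$, using $e_1(0)=1$: this gives $g_{o,1}(t)=o_2(t)+\sum_{s=1}^{t}e_1(s)\,o_2(t-s)$. Plugging in $o_2(m)=-\tfrac{6}{\pi\sqrt{24}}\binom{-3/2}{m}\tfrac{(-1)^m}{24^m}$ and, for $s\ge1$, the closed form $e_1(s)=\tfrac{(-1)^s}{24^s}\tfrac{(1/2-s)_{s+1}}{s}\sum_{u=1}^{s}\tfrac{(-1)^u(-s)_u}{(s+u)!(2u-1)!}(\pi^2/36)^u$ from \eqref{defeqn0}, one factors out $-\tfrac{6}{\pi\sqrt{24}}\tfrac{(-1)^t}{24^t}$ (note $(-1)^s(-1)^{t-s}=(-1)^t$ and $24^{-s}24^{-(t-s)}=24^{-t}$), leaving exactly $\binom{-3/2}{t}$ from the $s=0$ term and $\sum_{s=1}^{t}\tfrac{(1/2-s)_{s+1}\binom{-3/2}{t-s}}{s}\sum_{u=1}^{s}\tfrac{(-1)^u(-s)_u}{(s+u)!(2u-1)!}(\pi^2/36)^u$, i.e.\ $S_3(t)$ as in \eqref{eqn36new}, from the remaining terms. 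This proves the $t\ge2$ case verbatim.

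It then remains to treat the two exceptional small cases, where the stated formulas are written in fully simplified numeric form rather than in terms of $S_3$. For $t=1$ the convolution is $g_{o,1}(1)=e_1(0)o_2(1)+e_1(1)o_2(0)$; substituting $o_2(0)=-\tfrac{6}{\pi\sqrt{24}}$, $o_2(1)=-\tfrac{6}{\pi\sqrt{24}}\cdot\tfrac{-3/2}{24}\cdot(-1)$, and $e_1(1)=-\tfrac{1}{24}\cdot(-1/2)_{2}\cdot\tfrac{1}{2}\cdot\tfrac{(-1)(-1)_1}{2!\,1!}\cdot\tfrac{\pi^2}{36}$ and simplifying gives $-\tfrac{432+\pi^2}{2304\sqrt6\,\pi}$. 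For $t=0$ the convolution is just $e_1(0)o_2(0)=o_2(0)=-\tfrac{6}{\pi\sqrt{24}}$. I expect the only mildly delicate point to be bookkeeping with the Pochhammer symbol $(1/2-s)_{s+1}$ and the sign $(-1)^{t}$ when factoring, together with checking that the $t\ge2$ closed form does not accidentally reduce to the same clean numeric shape at $t=1$ (it does not, because the $\binom{-3/2}{t}$ term and the $S_3$ term are kept separate there); all of this is routine algebra and, if desired, can be double-checked with \texttt{Sigma}. This completes the proof.
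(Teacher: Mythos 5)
Your proposal is correct and follows essentially the same route as the paper: both form the Cauchy product $E_1\cdot O_2$, identify the coefficient of $(1/\sqrt{n})^{2t+1}$ with the convolution $o_2(t)+\sum_{s=1}^{t}e_1(s)o_2(t-s)=g_{o,1}(t)$ for $t\geq 2$, and compute the $t=0$ and $t=1$ coefficients directly (your value $-\tfrac{432+\pi^2}{2304\sqrt{6}\pi}$ for $t=1$ checks out). The only difference is that you spell out the factoring of $-\tfrac{6}{\pi\sqrt{24}}\tfrac{(-1)^t}{24^t}$ explicitly, which the paper leaves implicit by citing the definitions.
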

\begin{proof}
From \eqref{defeqn1}, \eqref{eqn19} and \eqref{eqn24}, it follows that
\begingroup
\allowdisplaybreaks
\begin{eqnarray}\label{eqn37}
S_{o,1}\Bigl(\dfrac{1}{\sqrt{n}}\Bigr) & = & E_1\Bigl(\dfrac{1}{\sqrt{n}}\Bigr)O_2\Bigl(\dfrac{1}{\sqrt{n}}\Bigr)\nonumber\\
&=& \dfrac{1}{\sqrt{n}}\Biggl(1+\sum_{t=1}^{\infty}e_1(t) \Bigl(\dfrac{1}{\sqrt{n}}\Bigr)^{2t}\Biggr) \Biggl(-\dfrac{6}{\pi\sqrt{24}}+\sum_{t=1}^{\infty}o_2(t) \Bigl(\dfrac{1}{\sqrt{n}}\Bigr)^{2t}\Biggr)\nonumber\\
&=& -\dfrac{6}{\pi\sqrt{24}}\dfrac{1}{\sqrt{n}}-\dfrac{432+\pi^2}{2304\sqrt{6}\pi}\dfrac{1}{\sqrt{n}^3}+\sum_{t=2}^{\infty} \Biggl(o_2(t)+\sum_{s=1}^{t}e_1(s)o_2(t-s)\Biggr)\Bigl(\dfrac{1}{\sqrt{n}}\Bigr)^{2t+1}\nonumber\\
&=& -\dfrac{6}{\pi\sqrt{24}}\dfrac{1}{\sqrt{n}}-\dfrac{432+\pi^2}{2304\sqrt{6}\pi}\dfrac{1}{\sqrt{n}^3}+\sum_{t=2}^{\infty} g_{o,1}(t) \Bigl(\dfrac{1}{\sqrt{n}}\Bigr)^{2t+1} (\text{by}\ \ \eqref{defeqn0}\ \text{and}\ \eqref{eqn19}).\nonumber\\
\end{eqnarray}
\endgroup 
\end{proof}
\begin{definition}\label{newdef9}
	For $t \in \mathbb{Z}_{\geq 1}$, define
	\begin{equation}\label{eqn40new}
	S_4(t):=\sum_{s=0}^{t}(-1)^s (1/2-s)_{s+1}\sum_{u=0}^{s}\dfrac{(-1)^u(-s)_u}{(s+u+1)!(2u)!}\Bigl(\dfrac{\pi^2}{36}\Bigr)^{u},
	\end{equation}
	and
	\begin{equation}\label{eqn40}
	\hspace{-5.5 cm} g_{o,2}(t) := -\dfrac{\pi}{12 \sqrt{6}}\dfrac{1}{(24)^t}S_4(t).
	\end{equation}
\end{definition}
\begin{lemma}\label{lemsec6}
Let $S_{o,2}\Bigl(\dfrac{1}{\sqrt{n}}\Bigr)$ be as in \eqref{eqn25} and $g_{o,2}(t)$ be as in Definition \ref{newdef9}. Then	
	\begin{equation}\label{eqn38}
	S_{o,2}\Bigl(\dfrac{1}{\sqrt{n}}\Bigr) =\sum_{t=0}^{\infty}g_{o,2}(t) \Bigl(\dfrac{1}{\sqrt{n}}\Bigr)^{2t+1}.
	\end{equation}
\end{lemma}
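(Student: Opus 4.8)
The statement to prove is Lemma \ref{lemsec6}: that the product $S_{o,2}(1/\sqrt{n}) = E_2(1/\sqrt{n})\, O_1(1/\sqrt{n})$ has the power series expansion $\sum_{t=0}^\infty g_{o,2}(t)(1/\sqrt{n})^{2t+1}$, with $g_{o,2}(t)$ as in Definition \ref{newdef9}. This is structurally parallel to Lemmas \ref{lemsec3}--\ref{lemsec5}, so I would follow the same Cauchy-product template. First I would recall from \eqref{eqn18} that $E_2(1/\sqrt{n}) = \sum_{t\geq 0} e_2(t)(1/\sqrt{n})^{2t}$ with $e_2(t) = 1/(24)^t$, and from Definition \ref{newdef2} and \eqref{defeqn3} that $O_1(1/\sqrt{n}) = \sum_{t\geq 0} o_1(t)(1/\sqrt{n})^{2t+1}$. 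Since $E_2$ carries only even powers of $1/\sqrt{n}$ and $O_1$ only odd powers, the product carries only odd powers $(1/\sqrt{n})^{2t+1}$, which matches the claimed shape.

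Next, collecting the coefficient of $(1/\sqrt{n})^{2t+1}$ in the Cauchy product gives
$$
\sum_{s=0}^{t} e_2(t-s)\, o_1(s) = \sum_{s=0}^{t} \dfrac{1}{(24)^{t-s}}\, o_1(s).
$$
Now substitute the explicit formula for $o_1(s)$ from \eqref{defeqn2}, namely
$$
o_1(s) = -\dfrac{\pi}{12\sqrt{6}}\,\dfrac{(-1)^s (1/2-s)_{s+1}}{(24)^s}\sum_{u=0}^{s}\dfrac{(-1)^u(-s)_u}{(s+u+1)!(2u)!}\Bigl(\dfrac{\pi^2}{36}\Bigr)^u.
$$
The factor $(24)^{-(t-s)}(24)^{-s} = (24)^{-t}$ is independent of $s$, so it pulls out of the sum, leaving
$$
\sum_{s=0}^{t} e_2(t-s)\, o_1(s) = -\dfrac{\pi}{12\sqrt{6}}\,\dfrac{1}{(24)^t}\sum_{s=0}^{t}(-1)^s (1/2-s)_{s+1}\sum_{u=0}^{s}\dfrac{(-1)^u(-s)_u}{(s+u+1)!(2u)!}\Bigl(\dfrac{\pi^2}{36}\Bigr)^u,
$$
and the inner double sum is exactly $S_4(t)$ of \eqref{eqn40new}. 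Hence the coefficient equals $-\frac{\pi}{12\sqrt 6}\frac{1}{(24)^t}S_4(t) = g_{o,2}(t)$, which is \eqref{eqn38}.

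**Main obstacle.** There is essentially no analytic obstacle here — unlike the later sections, this lemma is a bookkeeping identity and the only thing to be careful about is that all the index bookkeeping in the Cauchy product is correct (in particular, that the $s=0$ term of $O_1$ is genuinely present and contributes the $t=0$ coefficient $g_{o,2}(0) = -\frac{\pi}{12\sqrt6}$, consistent with $S_4(0)=\sum_{u=0}^0 \frac{(-s)_u\cdots}{\cdots}\big|_{s=0} = \frac{1}{1!\,0!} = 1$ times $(1/2)_1 = 1/2$... one should double-check this normalization matches $o_1(0) = -\frac{\pi}{12\sqrt6}$ against $(1/2-0)_{0+1} = (1/2)_1 = 1/2$, so in fact $S_4(0)$ must equal $1$, forcing the Pochhammer convention $(1/2)_1 = 1/2$ to be absorbed correctly — worth one sentence of verification). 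Formally, one should note that $E_2$ and $O_1$ are formal power series (or convergent for $n\geq 1$ in the region of interest), so the product and rearrangement are legitimate; this is the same justification used implicitly in Lemmas \ref{lemsec3}--\ref{lemsec5}. No creative input beyond substitute-and-simplify is needed.
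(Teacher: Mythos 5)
Your proof is correct and is essentially identical to the paper's: the paper also forms the Cauchy product $O_1\cdot E_2$, reads off the coefficient $\sum_{s=0}^{t}o_1(s)e_2(t-s)$ of $(1/\sqrt{n})^{2t+1}$, and identifies it with $g_{o,2}(t)$ by the definitions (your extra step of pulling out $(24)^{-t}$ to exhibit $S_4(t)$ explicitly is exactly what that identification amounts to). The only blemish is in your parenthetical sanity check: $o_1(0)=-\tfrac{\pi}{12\sqrt{6}}\cdot(1/2)_1=-\tfrac{\pi}{24\sqrt{6}}$ and $S_4(0)=\tfrac12$ (not $1$), which is consistent with $g_{o,2}(0)=o_1(0)e_2(0)$, so the main argument is unaffected.
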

\begin{proof}
	From \eqref{defeqn3}, \eqref{eqn18} and \eqref{eqn25}, it follows that
	\begin{eqnarray}\label{eqn41}
	S_{o,1}\Bigl(\dfrac{1}{\sqrt{n}}\Bigr) & = & O_1\Bigl(\dfrac{1}{\sqrt{n}}\Bigr)E_2\Bigl(\dfrac{1}{\sqrt{n}}\Bigr)\nonumber\\
	&=& \sum_{t=0}^{\infty}\Biggl(\sum_{s=0}^{t}o_1(s)e_2(t-s)\Biggr) \Bigl(\dfrac{1}{\sqrt{n}}\Bigr)^{2t+1}\nonumber\nonumber\\
	&=& \sum_{t=0}^{\infty} g_{o,2}(t) \Bigl(\dfrac{1}{\sqrt{n}}\Bigr)^{2t+1}\ (\text{by}\ \ \eqref{defeqn3}\ \text{and}\ \eqref{eqn18}).
	\end{eqnarray}
\end{proof}
\begin{definition}\label{newdef10}
For each $i \in \{1,2\}$, let $g_{e,i}(t)$ and $g_{o,i}(t)$ be as in Definitions \ref{newdef6}-\ref{newdef9}.	
We define a power series 
$$G(n):=\sum_{t=0}^{\infty}g(t)\Bigl(\dfrac{1}{\sqrt{n}}\Bigr)^t=\sum_{t=0}^{\infty}g(2t)\Bigl(\dfrac{1}{\sqrt{n}}\Bigr)^{2t}+\sum_{t=0}^{\infty}g(2t+1)\Bigl(\dfrac{1}{\sqrt{n}}\Bigr)^{2t+1},$$ 
where
\begin{equation}\label{neweqn2}
g(2t):= g_{e,1}(t)+g_{e,2}(t)\ \ \text{and}\ \
g(2t+1):= g_{o,1}(t)+g_{o,2}(t).
\end{equation}
\end{definition}
\begin{lemma}\label{newlemma2}
Let $G(n)$ be as in Definition \ref{newdef10}. Then 
\begin{equation}\label{newlemma2eqn1}
\dfrac{\sqrt{12}\ e^{\mu(n)}}{24n-1}\Bigl(1-\dfrac{1}{\mu(n)}\Bigr)=\dfrac{1}{4n\sqrt{3}}e^{\pi\sqrt{2n/3}} \cdot G(n).
\end{equation}
\end{lemma}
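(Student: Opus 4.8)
The plan is to assemble Lemma~\ref{newlemma2} directly from the four decomposition lemmas already established, namely Lemmas~\ref{lemsec3}, \ref{lemsec4}, \ref{lemsec5}, and \ref{lemsec6}, together with Lemma~\ref{newlemma1}. First I would recall from Lemma~\ref{newlemma1} that
\[
\dfrac{\sqrt{12}\ e^{\mu(n)}}{24n-1}\Bigl(1-\dfrac{1}{\mu(n)}\Bigr)=\dfrac{1}{4n\sqrt{3}}e^{\pi\sqrt{2n/3}} \sum_{i=1}^{2}\Biggl(S_{e,i}\Bigl(\dfrac{1}{\sqrt{n}}\Bigr)+S_{o,i}\Bigl(\dfrac{1}{\sqrt{n}}\Bigr)\Biggr),
\]
so it suffices to show that the bracketed sum of the four series $S_{e,1}, S_{e,2}, S_{o,1}, S_{o,2}$ equals the single power series $G(n)$ from Definition~\ref{newdef10}.

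The key step is to collect terms by parity of the exponent of $\tfrac{1}{\sqrt{n}}$. By Lemmas~\ref{lemsec3} and \ref{lemsec4}, $S_{e,1}$ and $S_{e,2}$ contribute only even powers, with coefficient of $(\tfrac{1}{\sqrt{n}})^{2t}$ equal to $g_{e,1}(t)+g_{e,2}(t)$ (where $g_{e,2}(0):=0$, consistent with the summation starting at $t=1$ in \eqref{eqn31}). By \eqref{neweqn2} this sum is exactly $g(2t)$. Similarly, by Lemmas~\ref{lemsec5} and \ref{lemsec6}, $S_{o,1}$ and $S_{o,2}$ contribute only odd powers, with coefficient of $(\tfrac{1}{\sqrt{n}})^{2t+1}$ equal to $g_{o,1}(t)+g_{o,2}(t)=g(2t+1)$ by \eqref{neweqn2}. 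Adding the four series therefore gives
\[
\sum_{i=1}^{2}\Biggl(S_{e,i}\Bigl(\dfrac{1}{\sqrt{n}}\Bigr)+S_{o,i}\Bigl(\dfrac{1}{\sqrt{n}}\Bigr)\Biggr)=\sum_{t=0}^{\infty}g(2t)\Bigl(\dfrac{1}{\sqrt{n}}\Bigr)^{2t}+\sum_{t=0}^{\infty}g(2t+1)\Bigl(\dfrac{1}{\sqrt{n}}\Bigr)^{2t+1}=G(n).
\]
Substituting this back into the identity from Lemma~\ref{newlemma1} yields \eqref{newlemma2eqn1}.

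The only point requiring a little care — and the closest thing to an obstacle here — is bookkeeping of the index ranges: $g_{e,2}(t)$ and hence the series $S_{e,2}$ in \eqref{eqn31} start at $t=1$, while $g_{o,1}(t)$ starts at $t=0$ with special values at $t=0,1$ built into Definition~\ref{newdef8}. One must check that these low-order terms match up so that the combined even part is genuinely $\sum_{t\ge 0}(g_{e,1}(t)+g_{e,2}(t))(\tfrac{1}{\sqrt{n}})^{2t}$ with $g_{e,2}(0)=0$, and the combined odd part is $\sum_{t\ge 0}(g_{o,1}(t)+g_{o,2}(t))(\tfrac{1}{\sqrt{n}})^{2t+1}$ with no missing terms. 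Since all four series converge absolutely for $n\ge 1$ (being products of convergent power series in $\tfrac{1}{\sqrt{n}}$), rearranging and adding them termwise is legitimate, and the proof reduces to invoking the preceding lemmas. This is essentially a formal verification, so I expect the write-up to be short.
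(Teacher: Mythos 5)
Your proposal is correct and follows essentially the same route as the paper: the paper's proof is the one-line observation that applying Lemmas \ref{lemsec3}--\ref{lemsec6} to the decomposition in Lemma \ref{newlemma1} yields \eqref{newlemma2eqn1}, which is exactly your argument. Your extra care with the index ranges (in particular the convention $g_{e,2}(0)=0$ so that the even part starts at $t=0$) is a worthwhile bookkeeping point that the paper leaves implicit.
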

\begin{proof}
Applying Lemmas \ref{lemsec3}-\ref{lemsec6} to Lemma \ref{lemsec2}, we immediately obtain \eqref{newlemma2eqn1}.
\end{proof}
\begin{remark}\label{Remarkp(n)asymp}
Note that using \textnormal{\texttt{Sigma}} and \textnormal{\texttt{GeneratingFunctions}} due to \textnormal{Mallinger} \cite{Mallinger1996}, we observe that for all $t \geq 0$,
\begin{equation}\label{rmrkpnasympeqn1}
g(2t)=g_{e,1}(t)+g_{e,2}(t)=\omega_{2t}\ \ \text{and}\ \ g(2t+1)=g_{o,1}(t)+g_{o,2}(t)=\omega_{2t+1},
\end{equation} 
where $\omega_t$ is as in \eqref{Sullivan2}. Equivalently,
\begin{equation}\label{g(t)definition}
g(t)=\omega_t=\dfrac{1}{(-4\sqrt{6})^t}\sum_{k=0}^{\frac{t+1}{2}}\binom{t+1}{k}\dfrac{t+1-k}{(t+1-2k)!}\Bigl(\frac{\pi}{6}\Bigr)^{t-2k}.
\end{equation} 
However this was already clear from the uniqueness of the asymptotic expansion for $p(n)$ and its proof can be considered as an additional verification of our computations. The reader might wonder at this point why we did not use the single sum expression found by O'Sullivan to bound the remainder of the asymptotic expansion for $p(n)$. We tried this indeed, but could not obtain from $\omega_t$ an effective upper and lower bound. The summation package \textnormal{\texttt{Sigma}} could not rewrite $\omega_t$ as a definite sum which is crucial for our estimations. However going to the double sum expression $g(t)$, \textnormal{\texttt{Sigma}} was able to give a definite sum expression for the inner sum as we will see later, and this enabled us to obtain effective upper and lower bounds in the sense that we described earlier. Namely, $l(t)<g(t)<u(t)$ and $\lim_{t\to \infty}\frac{l(t)}{g(t)}=\lim_{t\to \infty}\frac{u(t)}{g(t)}=1$.   
\end{remark}
\section{Preliminary Lemmas}\label{sect2}
This section presents all the preliminary facts needed for the proofs of the lemmas stated in Section \ref{sect3}. The proofs of Lemmas \ref{lem1} to \ref{helperC}, except \ref{helperC2}, are presented in Subsection \ref{proofoflemmas}.
\begin{lemma}\label{lem1}
  Let $x_1,x_2,\dots,x_n\leq 1$ and $y_1,\dots,y_1$ be non-negative real numbers. Then
  $$\frac{(1-x_1)(1-x_2)\cdots (1-x_n)}{(1+y_1)(1+y_2)\cdots(1+y_n)}\geq 1-\sum_{j=1}^nx_j-\sum_{j=1}^ny_j.$$
\end{lemma}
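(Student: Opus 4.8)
The statement to prove is Lemma~\ref{lem1}: for real numbers $x_1,\dots,x_n\leq 1$ and non-negative reals $y_1,\dots,y_n$,
$$\frac{(1-x_1)\cdots(1-x_n)}{(1+y_1)\cdots(1+y_n)}\geq 1-\sum_{j=1}^n x_j-\sum_{j=1}^n y_j.$$

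\textbf{Approach.} The plan is to prove two simpler inequalities separately and then combine them. First I would show that under the hypotheses, $(1-x_1)\cdots(1-x_n)\geq 1-\sum_{j=1}^n x_j$, and second that $\frac{1}{(1+y_1)\cdots(1+y_n)}\geq 1-\sum_{j=1}^n y_j$; equivalently $(1+y_1)\cdots(1+y_n)\geq\ldots$ — actually the cleaner route is to establish directly that $\prod_j\frac{1-x_j}{1+y_j}\geq 1-\sum_j x_j-\sum_j y_j$ by a single induction on $n$. I will take the induction route since it handles the mixed product uniformly.

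\textbf{Key steps.} The base case $n=1$ reads $\frac{1-x_1}{1+y_1}\geq 1-x_1-y_1$, which after clearing the positive denominator $1+y_1$ becomes $1-x_1\geq(1-x_1-y_1)(1+y_1)=1-x_1-y_1+y_1-x_1y_1-y_1^2=1-x_1-x_1y_1-y_1^2$, i.e.\ $0\geq -x_1y_1-y_1^2=-y_1(x_1+y_1)$. Hmm, that is not obviously true without sign control on $x_1$. Let me instead first reduce to the case where all $x_j\geq 0$: if some $x_j<0$, then replacing $x_j$ by $0$ only decreases the left side (since $1-x_j>1$ and we divide... no, it multiplies the product by a factor $>1$, increasing it) while increasing the right side — wait, decreasing the subtracted sum increases the right side too. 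So that reduction goes the wrong way. The correct observation: the inequality is trivial when $\sum_j x_j+\sum_j y_j\geq 1$, because the left side is non-negative (each $1-x_j\geq 0$ as $x_j\leq 1$, each $1+y_j>0$) while the right side is $\leq 0$. So I may assume $\sum_j x_j+\sum_j y_j<1$, which forces in particular each $x_j<1$ strictly and more usefully lets me run the induction cleanly. For the inductive step, write $P_n=\prod_{j=1}^n\frac{1-x_j}{1+y_j}$; by the inductive hypothesis $P_{n-1}\geq 1-\sum_{j<n}x_j-\sum_{j<n}y_j=:1-\sigma$ where $\sigma\geq 0$ (using the reduction, $\sigma<1$), and $\frac{1-x_n}{1+y_n}\geq 1-x_n-y_n\geq 0$ (the $n=1$ case, valid once we know $x_n+y_n<1$... but I need $x_n\geq 0$ here, which the reduction does not give).

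\textbf{Cleaner plan.} I will instead prove the two factors separately. Claim A: $\prod_{j=1}^n(1-x_j)\geq 1-\sum_{j=1}^n x_j$ for all reals $x_j\leq 1$ — this is the standard Weierstrass product inequality, provable by induction: $(1-x_1)\cdots(1-x_n)(1-x_{n+1})\geq(1-\sum_{j\leq n}x_j)(1-x_{n+1})$ when $1-x_{n+1}\geq 0$ and $1-\sum_{j\leq n}x_j\geq 0$ (the latter case; if $1-\sum_{j\leq n}x_j<0$ the whole product is... one must be slightly careful, but the classical statement holds and I would cite it or give the three-line induction splitting on the sign of the partial sum). Claim B: $\prod_{j=1}^n(1+y_j)\leq \frac{1}{1-\sum_{j=1}^n y_j}$ when $\sum y_j<1$, equivalently $\prod(1+y_j)(1-\sum y_j)\leq 1$; again by induction or by noting $\prod(1+y_j)\leq e^{\sum y_j}$ is too weak, so better: induction using $(1+y_{n+1})\bigl(1-\sum_{j\leq n}y_j-y_{n+1}\bigr)\leq 1-\sum_{j\leq n}y_j$ which rearranges to $-y_{n+1}^2-y_{n+1}\sum_{j\leq n}y_j\leq 0$, true. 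Then $\frac{\prod(1-x_j)}{\prod(1+y_j)}\geq\bigl(1-\sum x_j\bigr)\cdot\bigl(1-\sum y_j\bigr)$ when $1-\sum x_j\geq 0$, and finally $(1-a)(1-b)=1-a-b+ab\geq 1-a-b$ since $ab\geq 0$ (here $b=\sum y_j\geq 0$, and $a=\sum x_j$ may be negative, but then $ab\leq 0$... ). The surviving difficulty is exactly this sign bookkeeping on $\sum x_j$: I expect the main obstacle to be organizing the case split (whether $\sum x_j\geq 0$ and whether $\sum x_j+\sum y_j\geq 1$) so that in every branch either the RHS is $\leq 0$ and the LHS $\geq 0$, or all quantities are in $[0,1]$ and the elementary product inequalities apply. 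Once that case analysis is set up, each piece is a routine one-line induction, so I would present the case split first and then dispatch the single nontrivial branch with Claims A and B.
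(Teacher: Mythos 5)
The difficulty you keep circling around --- ``sign control on $x_1$'' --- comes from a misreading of the hypothesis. The clause ``non-negative real numbers'' is meant to apply to the $x_j$ as well as to the $y_j$ (so $0\leq x_j\leq 1$ and $y_j\geq 0$); the paper's own base case concludes with ``this is always true because $x_1,y_1$ are non-negative real numbers.'' Under your broader reading, where $x_j\leq 1$ may be negative, the lemma is simply false: take $n=1$, $x_1=-10$, $y_1=1$, so the left side is $11/2$ while the right side is $1+10-1=10$. Consequently no amount of case splitting can rescue your ``cleaner plan'': Claim A as you state it (for all reals $x_j\leq 1$) already fails, e.g.\ for $x_1=-10$, $x_2=1/2$, where $\prod_j(1-x_j)=11/2$ but $1-\sum_j x_j=21/2$; the classical Weierstrass product inequality needs $x_j\in[0,1]$. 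The final combination step $(1-a)(1-b)\geq 1-a-b$ likewise breaks when $a=\sum_j x_j<0$. So as written the proposal has a genuine gap: the one branch you defer (``organizing the case split'') is exactly the branch that cannot be dispatched.

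Once you grant $x_j\geq 0$, your very first plan --- the single induction on $n$ --- is the paper's proof and goes through with no case analysis at all. The base case reduces to $0\geq -y_1^2-x_1y_1$, true since $x_1,y_1\geq 0$. For the step, multiply the inductive inequality $P\geq 1-S$ (with $S=\sum_{j\leq N}x_j+\sum_{j\leq N}y_j\geq 0$; no upper bound on $S$ is needed) by the non-negative factor $\frac{1-x_{N+1}}{1+y_{N+1}}$ and verify $(1-S)(1-x_{N+1})\geq(1-S-x_{N+1}-y_{N+1})(1+y_{N+1})$, which rearranges to $S x_{N+1}+y_{N+1}^2+x_{N+1}y_{N+1}+S y_{N+1}\geq 0$ --- automatic from non-negativity. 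In particular your worry about the regime $\sum_j x_j+\sum_j y_j\geq 1$ is moot: the induction never uses $1-S\geq 0$.
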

\begin{lemma}\label{lem2}
  For $t\geq 1$ and non-negative integer $u \leq t$, we have
  $$\frac{1}{2t}\geq \frac{t(-t)_u(-1)^u}{(1+2t)(t+u)(t)_u}\geq \frac{1}{2t}\Biggl(1-\frac{u^2+\frac{1}{2}}{t}\Biggr).$$
\end{lemma}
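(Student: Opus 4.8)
The plan is to collapse the middle quantity into a single finite product and then estimate that product from both sides. Write $P(u):=\dfrac{t(-t)_u(-1)^u}{(1+2t)(t+u)(t)_u}$ for $0\le u\le t$. Since $u\le t$, one has $(-t)_u=(-1)^u\,t(t-1)\cdots(t-u+1)$, so $(-1)^u(-t)_u=\dfrac{t!}{(t-u)!}\ge 0$ and (using $t\ge 1$, which makes every factor positive) $P(u)$ is a well-defined nonnegative number. First I would record $P(0)=\dfrac{1}{1+2t}$ and, from the Pochhammer ratios $(-t)_{u+1}/(-t)_u=-(t-u)$ and $(t)_{u+1}/(t)_u=t+u$, the one-step quotient $\dfrac{P(u+1)}{P(u)}=\dfrac{t-u}{t+u+1}$. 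This telescopes to
$$P(u)=\frac{1}{1+2t}\prod_{j=1}^{u}\frac{t-j+1}{t+j}=\frac{(t!)^2}{(1+2t)\,(t-u)!\,(t+u)!}.$$

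The upper bound is then immediate: every factor satisfies $\dfrac{t-j+1}{t+j}\le 1$, hence $P(u)\le P(0)=\dfrac{1}{1+2t}<\dfrac{1}{2t}$, which is the left-hand inequality of the lemma.

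For the lower bound I would write $\dfrac{t-j+1}{t+j}=1-x_j$ with $x_j:=\dfrac{2j-1}{t+j}$, observing that $x_j\le 1$ because $j\le u\le t$. Lemma \ref{lem1} (with all $y_i=0$) gives $\prod_{j=1}^{u}(1-x_j)\ge 1-\sum_{j=1}^{u}x_j$, and since $\sum_{j=1}^{u}\dfrac{2j-1}{t+j}\le\dfrac{1}{t}\sum_{j=1}^{u}(2j-1)=\dfrac{u^2}{t}$, this yields $P(u)\ge\dfrac{1}{1+2t}\bigl(1-\tfrac{u^2}{t}\bigr)$. The final step trades $\dfrac{1}{1+2t}$ for $\dfrac{1}{2t}$: writing $\dfrac{1}{1+2t}=\dfrac{1}{2t}\bigl(1-\tfrac{1}{2t+1}\bigr)$, using $(1-a)(1-b)\ge 1-a-b$ for $a,b\ge 0$, and then $\tfrac{1}{2t+1}\le\tfrac{1}{2t}$, one obtains
$$P(u)\ge\frac{1}{2t}\Bigl(1-\frac{u^2}{t}-\frac{1}{2t+1}\Bigr)\ge\frac{1}{2t}\Bigl(1-\frac{u^2+\tfrac{1}{2}}{t}\Bigr),$$
which is the right-hand inequality. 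There is no serious obstacle here: the only point needing care is the sign bookkeeping that converts $P(u)$ into the factorial form, and once the product representation $P(u)=\dfrac{1}{1+2t}\prod_{j=1}^{u}\bigl(1-\tfrac{2j-1}{t+j}\bigr)$ is in place, both estimates follow from Lemma \ref{lem1} and the identity $\sum_{j=1}^{u}(2j-1)=u^2$.
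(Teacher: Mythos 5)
Your proof is correct and takes essentially the same route as the paper's: both rewrite the middle quantity as $\frac{1}{1+2t}$ times a telescoping product of factors in $(0,1]$ and then use Lemma \ref{lem1} to bound that product below by one minus the sum of the deviations, arriving at the same constant $u^2+\tfrac12$. The only difference is bookkeeping — you group the factors as $\frac{t-j+1}{t+j}$ and trade $\frac{1}{1+2t}$ for $\frac{1}{2t}$ in a separate last step, while the paper feeds the denominators $\bigl(1+\tfrac{1}{2t}\bigr)\bigl(1+\tfrac{u}{t}\bigr)\prod_j\bigl(1+\tfrac{j-1}{t}\bigr)$ directly into Lemma \ref{lem1} as the $y_i$'s.
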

\begin{lemma}\label{lem3}
  For $t\geq 1$ and non-negative integer $u \leq t$, we have
  $$\frac{2u+1}{2t}\geq \frac{1}{1+2t}+\frac{2t}{1+2t}\sum_{i=1}^u\frac{(-t)_i(-1)^i}{(t+i)(t)_i}\geq \frac{2u+1}{2t}-\frac{4u^3+6u^2+8u+3}{12t^2}.$$
\end{lemma}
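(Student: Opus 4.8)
The plan is to reduce the double-inequality to Lemma \ref{lem2} term by term. First I would rewrite the central quantity as a telescoping-style sum. Writing $c_i:=\dfrac{(-t)_i(-1)^i}{(t+i)(t)_i}$, the middle expression is $\dfrac{1}{1+2t}+\dfrac{2t}{1+2t}\sum_{i=1}^u c_i$. The key observation is that $\dfrac{1}{1+2t}=\dfrac{2t}{1+2t}c_0\cdot\dfrac{t+0}{t}\cdot\dfrac{1}{2t}$ up to bookkeeping — more precisely I would note that $c_0=\dfrac1t$, so $\dfrac{1}{1+2t}=\dfrac{2t}{1+2t}\cdot\dfrac{t\,c_0}{2t}$, and hence the whole middle expression equals $\dfrac{2t}{1+2t}\sum_{i=0}^u \dfrac{t\, c_i}{2t}\cdot(\text{something})$. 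Rather than force this, the cleaner route is: by Lemma \ref{lem2} (with $u$ replaced by each $i$ in turn), each summand $\dfrac{t(-t)_i(-1)^i}{(1+2t)(t+i)(t)_i}$ is squeezed between $\dfrac{1}{2t}\bigl(1-\tfrac{i^2+1/2}{t}\bigr)$ and $\dfrac{1}{2t}$. Summing the chain $i=0,1,\dots,u$ (the $i=0$ term contributing exactly $\dfrac{1}{1+2t}$, since $(-t)_0=(t)_0=1$ and $t+0=t$, giving $\dfrac{t}{(1+2t)t}=\dfrac{1}{1+2t}$) yields
\begin{equation*}
\frac{1}{1+2t}+\frac{2t}{1+2t}\sum_{i=1}^u c_i=\sum_{i=0}^u\frac{t(-t)_i(-1)^i}{(1+2t)(t+i)(t)_i},
\end{equation*}
so both bounds follow by adding up the per-term bounds from Lemma \ref{lem2}.

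For the upper bound this is immediate: summing $\dfrac{1}{2t}$ over $i=0,\dots,u$ gives $\dfrac{u+1}{2t}$ — but wait, the claimed upper bound is $\dfrac{2u+1}{2t}$, not $\dfrac{u+1}{2t}$, so the naive term-by-term bound from Lemma \ref{lem2} is too weak and I need the factor $2t/(1+2t)<1$ to help, or more likely I have mis-split the sum. The correct reading: the middle expression is genuinely $\dfrac{1}{1+2t}+\dfrac{2t}{1+2t}\sum_{i=1}^u c_i$ and I should \emph{not} absorb the leading $\dfrac{1}{1+2t}$ into the sum but instead bound $\dfrac{2t}{1+2t}\sum_{i=1}^u c_i$ directly. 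Each $c_i$ for $i\geq 1$ is itself of the form handled by Lemma \ref{lem2} after multiplying by $\dfrac{t}{1+2t}$: namely $\dfrac{2t}{1+2t}c_i=\dfrac{2}{1}\cdot\dfrac{t\,c_i}{1+2t}$, and Lemma \ref{lem2} says $\dfrac{1}{2t}\geq \dfrac{t\,c_i\,t}{(1+2t)(t+i)}\cdot(\dots)$. The honest bookkeeping is: $\dfrac{2t}{1+2t}\cdot\dfrac{(-t)_i(-1)^i}{(t+i)(t)_i}=\dfrac{2}{t}\cdot\Bigl(\dfrac{t^2(-t)_i(-1)^i}{(1+2t)(t+i)(t)_i}\Bigr)$; the bracket is exactly the quantity bounded in Lemma \ref{lem2} \emph{times $t$}, i.e. between $\dfrac{1}{2}\bigl(1-\tfrac{i^2+1/2}{t}\bigr)$ and $\dfrac12$. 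Hence $\dfrac{2t}{1+2t}c_i\in\bigl[\tfrac1t(1-\tfrac{i^2+1/2}{t}),\tfrac1t\bigr]$, and also $\dfrac{1}{1+2t}\in[\tfrac{1}{2t}(1-\tfrac{1}{2t}),\tfrac{1}{2t}]$ by the same lemma with $i=0$ (the $i=0$ instance gives $\dfrac{1}{2t}\geq \dfrac{1}{1+2t}\geq\dfrac{1}{2t}(1-\tfrac{1}{2t})$).

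Now summing: the upper bound becomes $\dfrac{1}{2t}+\sum_{i=1}^u\dfrac1t=\dfrac{1}{2t}+\dfrac{u}{t}=\dfrac{2u+1}{2t}$, exactly as claimed. For the lower bound, I sum $\dfrac{1}{2t}\bigl(1-\tfrac{1}{2t}\bigr)+\sum_{i=1}^u\dfrac1t\bigl(1-\tfrac{i^2+1/2}{t}\bigr)=\dfrac{2u+1}{2t}-\dfrac{1}{4t^2}-\dfrac{1}{t^2}\sum_{i=1}^u\bigl(i^2+\tfrac12\bigr)$, and using $\sum_{i=1}^u i^2=\dfrac{u(u+1)(2u+1)}{6}$ and $\sum_{i=1}^u\tfrac12=\dfrac u2$, the subtracted term is $\dfrac{1}{t^2}\bigl(\dfrac14+\dfrac{u(u+1)(2u+1)}{6}+\dfrac u2\bigr)=\dfrac{3+2u(u+1)(2u+1)+6u}{12t^2}=\dfrac{4u^3+6u^2+8u+3}{12t^2}$ after expanding $2u(u+1)(2u+1)=4u^3+6u^2+2u$. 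This matches the stated lower bound exactly.

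The main obstacle is purely the bookkeeping in the first step — correctly identifying that the prefactor $\dfrac{2t}{1+2t}$ combines with $c_i$ to land precisely in the range certified by Lemma \ref{lem2}, and that the stray $\dfrac{1}{1+2t}$ is the $i=0$ instance of the same estimate; once that alignment is made, the proof is just the two arithmetic summations above, with no further analytic input needed.
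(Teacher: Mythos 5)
Your final argument is correct and is essentially the paper's proof: the paper bounds $\frac{1}{1+2t}$ by $\frac{1}{2t}-\frac{1}{4t^2}\leq\frac{1}{1+2t}\leq\frac{1}{2t}$ and each term $\frac{2t}{1+2t}\cdot\frac{(-t)_i(-1)^i}{(t+i)(t)_i}$ by $\frac{1}{t}\bigl(1-\frac{i^2+1/2}{t}\bigr)$ from below and $\frac{1}{t}$ from above, then performs the same two summations you do; the only cosmetic difference is that the paper re-derives these per-term estimates from Lemma \ref{lem1} applied to the expanded product, whereas you obtain them by citing Lemma \ref{lem2} at index $0$ and at each $i\leq u\leq t$, which is the identical bound packaged as a lemma. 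The one blemish is your first displayed identity (absorbing $\frac{1}{1+2t}$ into $\sum_{i=0}^u\frac{t(-t)_i(-1)^i}{(1+2t)(t+i)(t)_i}$), which is off by a factor of $2$ on the $i\geq1$ terms, but since you explicitly retract it and the corrected bookkeeping is what feeds the final arithmetic, the proof stands.
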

Throughout the rest of this paper,  
$$\alpha:=\frac{\pi}{6}.$$
\begin{lemma}\label{helperC2}
  We have
  \begin{equation*}
    \begin{split}
      &\sum_{u=0}^{\infty}\frac{\alpha^{2u}}{(2u)!}=\cosh(\alpha),\ \sum_{u=0}^{\infty}\frac{u\alpha^{2u}}{(2u)!}=\frac{1}{2}\alpha\sinh(\alpha), \ \sum_{u=0}^{\infty}\frac{u^2\alpha^{2u}}{(2u)!}=\frac{\alpha^2}{4}\cosh(\alpha)+\frac{\alpha}{4}\sinh(\alpha),\\
\intertext{and}
      &\sum_{u=0}^{\infty}\frac{u^3\alpha^{2u}}{(2u)!}=\frac{3\alpha^2}{8}\cosh(\alpha)+\frac{\alpha(\alpha^2+1)}{8}\sinh(\alpha).
    \end{split}
  \end{equation*}
\end{lemma}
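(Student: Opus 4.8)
The plan is to evaluate each of the four series by relating them to the well-known exponential generating function identities for $\cosh$ and $\sinh$, and then differentiating with respect to a formal parameter to bring down the powers of $u$. Concretely, I would introduce the auxiliary function $F(x):=\sum_{u=0}^{\infty}\frac{x^{2u}}{(2u)!}=\cosh(x)$, which is standard, giving the first identity immediately upon setting $x=\alpha$. For the remaining three, the idea is that $\sum_{u\ge 0}\frac{u^m x^{2u}}{(2u)!}$ can be obtained from $F$ by applying the operator $\bigl(\tfrac{x}{2}\tfrac{d}{dx}\bigr)^m$, since $\tfrac{x}{2}\tfrac{d}{dx}$ acting on $x^{2u}$ returns $u\,x^{2u}$.

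First I would record $\tfrac{x}{2}\tfrac{d}{dx}\cosh(x)=\tfrac{x}{2}\sinh(x)$, which at $x=\alpha$ yields the second identity $\sum_{u\ge 0}\frac{u\alpha^{2u}}{(2u)!}=\tfrac12\alpha\sinh(\alpha)$. Next, applying the operator again, $\tfrac{x}{2}\tfrac{d}{dx}\bigl(\tfrac{x}{2}\sinh x\bigr)=\tfrac{x}{2}\bigl(\tfrac12\sinh x+\tfrac{x}{2}\cosh x\bigr)=\tfrac{x}{4}\sinh x+\tfrac{x^2}{4}\cosh x$, which gives the third identity. Finally, one more application: $\tfrac{x}{2}\tfrac{d}{dx}\bigl(\tfrac{x^2}{4}\cosh x+\tfrac{x}{4}\sinh x\bigr)=\tfrac{x}{2}\bigl(\tfrac{x}{2}\cosh x+\tfrac{x^2}{4}\sinh x+\tfrac14\sinh x+\tfrac{x}{4}\cosh x\bigr)=\tfrac{3x^2}{8}\cosh x+\tfrac{x(x^2+1)}{8}\sinh x$, which is the fourth identity after substituting $x=\alpha$. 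Alternatively, and perhaps cleaner to write up, one can simply verify each of the four identities by expanding the right-hand side via the Taylor series of $\cosh$ and $\sinh$ and comparing coefficients of $\alpha^{2u}$; this reduces each case to an elementary binomial-coefficient identity that any symbolic summation routine (or a one-line induction) settles.

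There is essentially no obstacle here: the only point requiring a word of justification is that term-by-term differentiation is legitimate, which follows because all four power series have infinite radius of convergence (they are dominated by $\cosh$), so the operator manipulations are valid on all of $\mathbb{R}$ and in particular at $x=\alpha=\pi/6$. I would therefore present the proof in two or three lines: cite the base identity $\sum u^0 x^{2u}/(2u)!=\cosh x$, note that $\sum u^m x^{2u}/(2u)!=(\tfrac{x}{2}D)^m\cosh x$ with $D=d/dx$, compute the three successive applications of $\tfrac{x}{2}D$, and evaluate at $\alpha$. If a fully self-contained and computation-free write-up is preferred, each identity can instead be checked by the coefficient-comparison route described above, invoking \texttt{Sigma} to close the resulting finite sums, consistent with the paper's overall methodology.
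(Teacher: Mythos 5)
Your proof is correct: the operator identity $\bigl(\tfrac{x}{2}\tfrac{d}{dx}\bigr)x^{2u}=u\,x^{2u}$ does reduce all four sums to successive differentiations of $\cosh(x)$, your three computed derivatives check out against the stated right-hand sides, and the term-by-term differentiation is justified by the infinite radius of convergence. Note that the paper itself supplies no proof of this lemma (Section \ref{sect2} explicitly excludes Lemma \ref{helperC2} from the proofs given in the appendix, treating these as standard closed forms), so there is no authorial argument to compare against; your write-up would serve as a complete and appropriately brief justification.
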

\begin{lemma}\label{mainlemma2}
  Let $u\in\mathbb{Z}_{\geq 0}$. Assume that $a_{n+1}-a_n\geq b_{n+1}-b_n$ for all $n\geq u$, and $\lim_{n\to \infty}a_n=\lim_{n\to\infty}b_n=0$. Then
  $$b_n\geq a_n \text{ for all $n\geq u$}.$$
\end{lemma}
\begin{lemma}\label{helperC}
  For $t\geq 1$ and $k\in\{0,1,2,3\}$ we have
  $$\sum_{u=t+1}^{\infty}\frac{u^k\alpha^{2u}}{(2u)!}\leq \frac{C_k}{t^2}\ \ \text{with}\ \ C_k=\frac{\alpha^4 2^k}{18}.$$
\end{lemma}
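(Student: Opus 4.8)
The plan is to prove the bound $\sum_{u=t+1}^{\infty}\frac{u^k\alpha^{2u}}{(2u)!}\leq \frac{C_k}{t^2}$ with $C_k=\frac{\alpha^4 2^k}{18}$ by a direct tail-estimate: compare the summand against a geometric-like series whose sum is easy to control, and extract a factor of $1/t^2$ from the first omitted term.

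\textbf{Step 1: Reduce to a clean ratio bound.} For $u\geq t+1$, I would look at the ratio of consecutive terms $\frac{u^k\alpha^{2u}/(2u)!}{(u-1)^k\alpha^{2(u-1)}/(2u-2)!}=\left(\frac{u}{u-1}\right)^k\frac{\alpha^2}{2u(2u-1)}$. Since $k\in\{0,1,2,3\}$ and $u\geq 2$, we have $\left(\frac{u}{u-1}\right)^k\leq 2^k\leq 8$, and $\frac{\alpha^2}{2u(2u-1)}\leq \frac{\alpha^2}{12}$ for $u\geq 2$ (as $\alpha=\pi/6<1$). Hence for $u\geq 2$ the ratio is at most $\frac{8\alpha^2}{12}=\frac{2\alpha^2}{3}<1$, so the tail is dominated by a geometric series with that ratio. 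More carefully, I would want the ratio for $u\geq t+2$ relative to the term at $u=t+1$, so that $\sum_{u=t+1}^{\infty}\frac{u^k\alpha^{2u}}{(2u)!}\leq \frac{(t+1)^k\alpha^{2(t+1)}}{(2t+2)!}\cdot\frac{1}{1-r}$ where $r$ is a uniform bound on the ratio valid for all $u\geq t+2\geq 3$; taking $r=\frac{2\alpha^2}{3}$ works and $\frac{1}{1-r}$ is an absolute constant close to $1$.

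\textbf{Step 2: Extract $1/t^2$ from the leading term.} It remains to show $\frac{(t+1)^k\alpha^{2(t+1)}}{(2t+2)!}\cdot\frac{1}{1-r}\leq \frac{\alpha^4 2^k}{18 t^2}$ for all $t\geq 1$. Here the point is that $\frac{\alpha^{2(t+1)}}{(2t+2)!}$ decays superexponentially while we only need decay like $t^{-2}$; the worst case is $t=1$. For $t=1$ the left side is $\frac{2^k\alpha^4}{24(1-r)}$ and the right side is $\frac{2^k\alpha^4}{18}$, so we need $\frac{1}{24(1-r)}\leq\frac{1}{18}$, i.e. $1-r\geq \frac{18}{24}=\frac34$, i.e. $r\leq\frac14$. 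Since $\alpha^2=\pi^2/36\approx 0.274$, the ratio bound from Step 1 at $u\geq 3$ is actually $\left(\frac32\right)^k\frac{\alpha^2}{30}$ for $k\leq 1$ and needs more care for $k=2,3$; I would instead use the crude bound $\left(\frac{u}{u-1}\right)^k\leq\left(\frac32\right)^3=\frac{27}{8}$ and $\frac{\alpha^2}{2u(2u-1)}\leq\frac{\alpha^2}{30}$ for $u\geq 3$, giving $r\leq\frac{27\alpha^2}{240}<\frac{27}{240\cdot 3.6}<\frac14$ — one checks this numerically. Then for $t\geq 2$ the factor $\frac{\alpha^{2(t+1)}}{(2t+2)!}$ is so much smaller than at $t=1$ that the inequality holds with room to spare; a short monotonicity argument (the ratio of the $t$-term to $\frac{2^k\alpha^4}{18t^2}$ is decreasing in $t$ for $t\geq 1$) finishes it. Alternatively, and perhaps more in the spirit of the paper, one could simply feed the closed forms of Lemma \ref{helperC2} — each of $\sum u^k\alpha^{2u}/(2u)!$ equals an explicit combination of $\cosh\alpha,\sinh\alpha$ — and bound the tail by (total sum) minus (partial sum up to $u=t$), then estimate that difference; but the ratio method above is self-contained.

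\textbf{Main obstacle.} The only delicate point is pinning down a ratio bound $r<\frac14$ that holds uniformly for all $u$ in the tail \emph{and} all $k\in\{0,1,2,3\}$ simultaneously, since the $k=3$ case inflates the ratio by $(u/(u-1))^3$ which is largest ($=27/8$) at $u=3$; everything else is routine once that constant is fixed, and the choice $C_k=\alpha^4 2^k/18$ is exactly calibrated so that the $t=1$ term is the binding constraint. I would present the argument by first establishing the uniform ratio bound as a sublemma, then the geometric tail bound, then the single numerical check at $t=1$ together with the monotonicity remark for $t\geq 2$.
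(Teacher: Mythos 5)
Your proposal is correct, but it takes a genuinely different route from the paper. You dominate the tail by a geometric series anchored at its first term, using the consecutive-term ratio $\bigl(\tfrac{u}{u-1}\bigr)^k\tfrac{\alpha^2}{2u(2u-1)}\leq\tfrac{27\alpha^2}{240}<\tfrac14$ for $u\geq 3$, and then verify the single binding case $t=1$ (where the required inequality $\tfrac{1}{24(1-r)}\leq\tfrac{1}{18}$ is exactly the condition $r\leq\tfrac14$), disposing of $t\geq 2$ by monotonicity. The paper instead invokes its Lemma \ref{mainlemma2}: it compares the increments $a_{n+1}-a_n$ of the tail with those of $b_n=C_k/n^2$, which reduces the claim to the single-variable inequality $f(n):=\tfrac{n^2(n+1)^{k+2}\alpha^{2n+2}}{(2n+1)(2n+2)!}\leq C_k$; it then shows $f$ is maximized at $n=1$ via a ratio computation certified by Cylindrical Algebraic Decomposition, and observes $f(1)=C_k$ exactly. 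Both arguments ultimately rest on the same two facts --- $\alpha^2=\pi^2/36$ is small and $t=1$ is the extremal case --- but the paper's telescoping comparison yields the constant $C_k$ with no slack (it is precisely $f(1)$), whereas your geometric bound carries slack absorbed by the generous margin $r\approx 0.03\ll\tfrac14$; in exchange, your argument is self-contained and avoids both Lemma \ref{mainlemma2} and the CAD step. The only part you leave as a sketch is the monotonicity claim for $t\geq 2$, but the relevant ratio $\tfrac{(t+2)^k(t+1)^2\alpha^2}{(t+1)^k t^2(2t+3)(2t+4)}$ is easily bounded below $1$ for $k\leq 3$ and $t\geq 1$, so this is routine to complete.
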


\begin{lemma}\cite[Equation 7.5, Lemma 7.3]{BPRZ}\label{BPRZ}
	For $n,k,s \in \mathbb{Z}_{\geq 1}$ and $n >2s$ let
	$$b_{k,n}(s):=\dfrac{4\sqrt{s}}{\sqrt{s+k-1}}\binom{s+k-1}{s-1}\dfrac{1}{n^k},$$
	then 
	\begin{equation}\label{BPRZeqn1}
	0< \sum_{t=k}^{\infty} \binom{-\frac{2s-1}{2}}{t}\dfrac{(-1)^k}{n^k}<b_{k,n}(s).
	\end{equation}
\end{lemma}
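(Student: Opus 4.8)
The plan is to identify the sum as a tail of the generalized binomial series for $(1-1/n)^{-\frac{2s-1}{2}}$ — reading the summand as $\binom{-\frac{2s-1}{2}}{t}\frac{(-1)^t}{n^t}$, as is needed for convergence — and to dominate that tail by a geometric series whose leading term already carries the shape of $b_{k,n}(s)$. Set $a:=\frac{2s-1}{2}>0$ and $c_t:=\binom{-a}{t}(-1)^t=\binom{a+t-1}{t}=\binom{t+s-\frac32}{t}$. Since $s\ge1$, each $c_t$ is a product of positive factors, so $c_t>0$ for all $t\ge0$; and since $1/n<1$ the series $\sum_{t\ge0}\binom{-a}{t}(-1/n)^t$ converges absolutely. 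Hence its tail $\sum_{t\ge k}c_t/n^t$ is a convergent sum of strictly positive terms, which settles the left inequality.

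For the right inequality I would estimate consecutive summands. From $c_{t+1}/c_t=\frac{t+s-\frac12}{t+1}$ we get $\frac{c_{t+1}/n^{t+1}}{c_t/n^t}=\frac{t+s-\frac12}{(t+1)n}$, and I claim this is $\le\frac12$ for every $t\ge k$ once $n>2s$: since $n>2s$ it is enough that $\frac{t+s-\frac12}{2s(t+1)}\le\frac12$, i.e. $t+s-\frac12\le s(t+1)$, i.e. $(s-1)t\ge-\frac12$, which holds for $s\ge1$, $t\ge0$. Summing the geometric majorant,
$$\sum_{t=k}^{\infty}\binom{-a}{t}\frac{(-1)^t}{n^t}\ \le\ \frac{c_k}{n^k}\sum_{j\ge0}\frac{1}{2^{j}}\ =\ \frac{2c_k}{n^k}.$$

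It then remains to check $2c_k<\frac{4\sqrt s}{\sqrt{s+k-1}}\binom{s+k-1}{k}$ (using $\binom{s+k-1}{s-1}=\binom{s+k-1}{k}$), i.e. that $b_{k,n}(s)n^k$ beats the leading term. Writing the quotient of binomials as a product and applying the elementary inequality $\frac{s-\frac12+j}{s+j}<\frac{s+j}{s+\frac12+j}$ (which is just $(s+j)^2>(s+j)^2-\frac14$) to one of two identical copies,
$$\left(\frac{c_k}{\binom{s+k-1}{k}}\right)^2=\left(\prod_{j=0}^{k-1}\frac{s-\frac12+j}{s+j}\right)^2<\prod_{j=0}^{k-1}\frac{s-\frac12+j}{s+\frac12+j}=\frac{s-\frac12}{s+k-\frac12}<\frac{s}{s+k-1},$$
where the middle product telescopes and $(s-\frac12)(s+k-1)<s(s+k-\frac12)$. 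Thus $c_k<\frac{\sqrt s}{\sqrt{s+k-1}}\binom{s+k-1}{k}$, so $2c_k<\frac{2\sqrt s}{\sqrt{s+k-1}}\binom{s+k-1}{k}<b_{k,n}(s)n^k$; combining with the previous display gives the right inequality.

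The one genuinely delicate step is the last comparison: converting the quotient of shifted binomial coefficients into the explicit closed form $b_{k,n}(s)$. The pairing trick $\frac{s-\frac12+j}{s+j}<\frac{s+j}{s+\frac12+j}$, which turns the squared product into a telescoping one, is what keeps this elementary; without it one is pushed into $\Gamma$-quotient estimates of Gautschi/Wendel type. By contrast the geometric step is very robust — any uniform bound $\le\frac12$ on the consecutive ratios for $t\ge k$ works, and the hypothesis $n>2s$ is exactly what supplies such a bound, with the factor $4$ (rather than $2$) in $b_{k,n}(s)$ leaving comfortable slack.
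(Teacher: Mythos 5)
This lemma is imported verbatim from \cite[Lemma 7.3]{BPRZ} and the present paper supplies no proof of it, so there is no in-paper argument to compare yours against; judged on its own terms, your proof is correct and complete. You are right to read the printed summand $\binom{-\frac{2s-1}{2}}{t}\frac{(-1)^k}{n^k}$ as $\binom{-\frac{2s-1}{2}}{t}\frac{(-1)^t}{n^t}$ — that is the only reading under which the sum is a genuine tail of $(1-1/n)^{-(2s-1)/2}$, and it matches how the lemma is actually invoked in Lemma \ref{errorlem3}. All three steps check out: positivity of $c_t=\binom{-a}{t}(-1)^t=\frac{1}{t!}\prod_{j=0}^{t-1}(a+j)$ with $a=s-\tfrac12>0$; the ratio bound $\frac{t+s-1/2}{(t+1)n}<\frac{1}{2s}\cdot\frac{t+s-1/2}{t+1}\le\tfrac12$, which reduces to $(s-1)t\ge-\tfrac12$ and is precisely where the hypothesis $n>2s$ is used; and the comparison $c_k<\sqrt{s/(s+k-1)}\,\binom{s+k-1}{k}$, whose squaring-and-telescoping device (pairing $\frac{s-1/2+j}{s+j}<\frac{s+j}{s+1/2+j}$ and then $\frac{s-1/2}{s+k-1/2}<\frac{s}{s+k-1}$) I verified by direct cross-multiplication. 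Your argument in fact establishes the bound with the constant $2$ in place of the $4$ appearing in $b_{k,n}(s)$, so the stated inequality follows with room to spare.
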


\begin{lemma}\cite[Equation 7.9, Lemma 7.5]{BPRZ}\label{BPRZ1}
	For $m,n,s \in \mathbb{Z}_{\geq 1}$ and $n >2s$ let
	$$c_{m,n}(s):=\dfrac{2}{m}\dfrac{s^m}{n^m},$$
	then 
	\begin{equation}\label{BPRZ1eqn1}
	-\dfrac{c_{m,n}(s)}{\sqrt{m}}< \sum_{k=m}^{\infty} \binom{1/2}{k}\dfrac{(-1)^ks^k}{n^k}<0.
	\end{equation}
\end{lemma}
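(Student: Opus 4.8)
The plan is to treat $\sum_{k=m}^{\infty}\binom{1/2}{k}\frac{(-1)^ks^k}{n^k}$ as the tail of the binomial series $(1-s/n)^{1/2}=\sum_{k=0}^{\infty}\binom{1/2}{k}(-1)^k(s/n)^k$, which is absolutely convergent because the hypothesis $n>2s$ forces $0<s/n<1/2<1$. First I would dispose of the sign. For $k\ge 1$ exactly $k-1$ of the factors $\tfrac12,\tfrac12-1,\dots,\tfrac12-(k-1)$ are negative, so $\binom{1/2}{k}=(-1)^{k-1}\big|\binom{1/2}{k}\big|$ and hence $\binom{1/2}{k}(-1)^k=-\big|\binom{1/2}{k}\big|<0$. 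Since the sum runs over $k\ge m\ge 1$, every term is negative, which already yields the right-hand inequality. It then remains to show $T:=\sum_{k=m}^{\infty}\big|\binom{1/2}{k}\big|(s/n)^k<c_{m,n}(s)/\sqrt m=2m^{-3/2}(s/n)^m$.

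For the tail estimate I would dominate $T$ by a geometric series. From $\binom{1/2}{k+1}=\binom{1/2}{k}\,\frac{1/2-k}{k+1}$ the quotient of the $(k+1)$-st and $k$-th summands of $T$ equals $\frac{2k-1}{2k+2}\cdot\frac sn$, which for every $k\ge m\ge 1$ is strictly smaller than $\frac sn<\frac12$. Hence $\big|\binom{1/2}{k}\big|(s/n)^k<\big|\binom{1/2}{m}\big|(s/n)^m\,(s/n)^{k-m}$ for $k>m$, so $T<\big|\binom{1/2}{m}\big|(s/n)^m\sum_{j\ge 0}(s/n)^j=\dfrac{\big|\binom{1/2}{m}\big|(s/n)^m}{1-s/n}<2\big|\binom{1/2}{m}\big|(s/n)^m$, where the last step uses $1-s/n>1/2$.

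It therefore suffices to establish the coefficient bound $\big|\binom{1/2}{m}\big|\le m^{-3/2}$. I would rewrite $\big|\binom{1/2}{m}\big|=\dfrac{1}{2m}\binom{2m-2}{m-1}4^{1-m}$ (an elementary factorial manipulation, checked directly for small $m$) and invoke $\binom{2j}{j}\le 4^j/\sqrt{3j+1}$, which follows by a straightforward induction on $j$ from $\binom{2j+2}{j+1}=\frac{2(2j+1)}{j+1}\binom{2j}{j}$, the inductive step reducing to $19j\le 20j$. Taking $j=m-1$ gives $\big|\binom{1/2}{m}\big|\le\dfrac{1}{2m\sqrt{3m-2}}$, and $2m\sqrt{3m-2}\ge m^{3/2}$ is equivalent to $11m\ge 8$, true for all $m\ge 1$. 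Combining, $T<2\big|\binom{1/2}{m}\big|(s/n)^m\le 2m^{-3/2}(s/n)^m=c_{m,n}(s)/\sqrt m$, so $-c_{m,n}(s)/\sqrt m<-T=\sum_{k=m}^{\infty}\binom{1/2}{k}\frac{(-1)^ks^k}{n^k}<0$, which is \eqref{BPRZ1eqn1}. The only genuinely delicate point is the coefficient estimate: the clean factor $\sqrt m$ in $c_{m,n}(s)$ forces the square-root-sharp central binomial bound (the crude $\binom{2j}{j}\le 4^j$ does not suffice), and the resulting constant $2$ is exactly matched by the factor $1/(1-s/n)<2$ used in the geometric-series step, so the argument has essentially no slack to spare.
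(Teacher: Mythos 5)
Your argument is correct. Note, however, that this paper does not actually prove Lemma \ref{BPRZ1}: it is imported verbatim by citation from \cite[Lemma 7.5]{BPRZ}, so there is no in-paper proof to compare against, and your proposal has to stand on its own. It does. The sign analysis $\binom{1/2}{k}(-1)^k=-\bigl|\binom{1/2}{k}\bigr|$ for $k\geq 1$ immediately gives the upper bound $0$; the term ratio $\frac{2k-1}{2k+2}\cdot\frac{s}{n}<\frac{s}{n}<\frac12$ justifies the geometric domination $T<\bigl|\binom{1/2}{m}\bigr|(s/n)^m/(1-s/n)<2\bigl|\binom{1/2}{m}\bigr|(s/n)^m$; the identity $\bigl|\binom{1/2}{m}\bigr|=\frac{1}{2m}\binom{2m-2}{m-1}4^{1-m}$ checks out (e.g.\ it gives $\frac12$ and $\frac18$ for $m=1,2$); the induction for $\binom{2j}{j}\leq 4^j/\sqrt{3j+1}$ reduces, as you say, to $(2j+1)^2(3j+4)\leq 4(j+1)^2(3j+1)$, whose two sides differ exactly by $j$; and $2m\sqrt{3m-2}\geq m^{3/2}\Leftrightarrow 11m\geq 8$ closes the estimate $\bigl|\binom{1/2}{m}\bigr|\leq m^{-3/2}$. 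Your closing remark is also apt: the $\sqrt{m}$-improvement over the cruder bound of Lemma \ref{BPRZ} genuinely requires the $O(4^j/\sqrt{j})$ central binomial estimate, and the factor $2$ from $1/(1-s/n)<2$ is exactly absorbed by the $2$ in $c_{m,n}(s)$, so the constant in the lemma is essentially tight for this method.
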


\begin{lemma}\cite[Equation 7.7, Lemma 7.4]{BPRZ}\label{BPRZ2}
	For $n,s \in \mathbb{Z}_{\geq 1}$, $m \in \mathbb{N}$ and $n >2s$ let
	$$\beta_{m,n}(s):=\dfrac{2}{n^m}\binom{s+m-1}{s-1},$$
	then 
	\begin{equation}\label{BPRZ2eqn1}
	0< \sum_{k=m}^{\infty} \binom{-s}{k}\dfrac{(-1)^k}{n^k}<\beta_{m,n}(s).
	\end{equation}
\end{lemma}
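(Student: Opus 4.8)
The plan is to reduce this to the generalized binomial series and then estimate the tail. First I would recall that for $|x|<1$ one has $(1-x)^{-s}=\sum_{k=0}^\infty\binom{-s}{k}(-x)^k=\sum_{k=0}^\infty\binom{s+k-1}{k}x^k$, which converges absolutely since $n>2s\geq 2$ forces $x:=1/n\in(0,1/2)$. Writing $\binom{-s}{k}(-1)^k=\binom{s+k-1}{k}=\binom{s+k-1}{s-1}\geq 0$, every term in the series $\sum_{k=m}^\infty\binom{-s}{k}(-1)^k/n^k$ is strictly positive, so the lower bound $0<\sum_{k=m}^\infty\binom{-s}{k}(-1)^k/n^k$ is immediate (the sum is nonempty and its first term $\binom{s+m-1}{s-1}/n^m>0$).

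For the upper bound I would estimate the tail termwise. For $k\geq m$ write $\binom{s+k-1}{s-1}=\binom{s+m-1}{s-1}\cdot\prod_{j=m}^{k-1}\frac{s+j}{j+1}$; since each factor $\frac{s+j}{j+1}\leq s$ for $j\geq 1$ (indeed $s+j\leq s(j+1)\iff 0\leq (s-1)j$, true as $s\geq 1$, $j\geq 1$), but a cleaner route is to compare directly against a geometric series: for $k\geq m\geq 1$ and $j\geq m$ we have $\frac{s+j}{j+1}\leq \frac{s+m}{m+1}\leq s$ when $m\geq 1$ — actually the slickest bound uses $\binom{s+k-1}{s-1}\leq \binom{s+k-1}{s-1}$ and the ratio of consecutive terms of the tail series, $\frac{\binom{s+k}{s-1}/n^{k+1}}{\binom{s+k-1}{s-1}/n^k}=\frac{s+k}{(k+1)n}\leq\frac{s+k}{(k+1)\cdot 2s}\leq\frac{1}{2}$ using $n>2s$ and $s+k\leq s(k+1)$. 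Hence the tail is dominated by a geometric series with ratio $\leq 1/2$:
\begin{equation*}
\sum_{k=m}^{\infty}\binom{s+k-1}{s-1}\frac{1}{n^k}\leq \binom{s+m-1}{s-1}\frac{1}{n^m}\sum_{i=0}^{\infty}\frac{1}{2^i}=\frac{2}{n^m}\binom{s+m-1}{s-1}=\beta_{m,n}(s),
\end{equation*}
and since the first inequality is strict as soon as the tail has more than one term (and when it has exactly one term the bound $1<2$ is still strict), we get $\sum_{k=m}^\infty\binom{-s}{k}(-1)^k/n^k<\beta_{m,n}(s)$, completing the proof.

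The main obstacle, such as it is, is purely bookkeeping: choosing the cleanest of the several equivalent ways to bound the ratio of consecutive tail terms by $1/2$ and confirming that the geometric comparison yields a strict inequality in the edge case where only the leading term is present. Since this is essentially identical in structure to Lemmas \ref{BPRZ} and \ref{BPRZ1}, I would expect the argument to mirror the proof of those results in \cite{BPRZ} almost verbatim, with $\binom{-s}{k}$ in place of $\binom{-(2s-1)/2}{t}$.
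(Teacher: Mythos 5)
Your proposal is correct. Note, however, that the paper contains no proof of this lemma to compare against: it is quoted verbatim from \cite{BPRZ} (Lemma~7.4 there), and the present paper only uses it as an imported tool. Judged on its own, your argument is complete: the identity $\binom{-s}{k}(-1)^k=\binom{s+k-1}{s-1}\geq 0$ gives positivity of every term and hence the lower bound, and the ratio computation $\frac{s+k}{(k+1)n}<\frac{s+k}{2s(k+1)}\leq\frac12$ (using $n>2s$ strictly and $s+k\leq s(k+1)$) dominates the tail by a geometric series summing to $2\binom{s+m-1}{s-1}n^{-m}=\beta_{m,n}(s)$, with strictness coming from the strict ratio bound on the terms beyond the first. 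This is exactly the style of argument used for the companion Lemmas~\ref{BPRZ} and~\ref{BPRZ1} in \cite{BPRZ}, so your expectation that the source proof mirrors yours is well founded. Two cosmetic points: the aside ``$\binom{s+k-1}{s-1}\leq\binom{s+k-1}{s-1}$'' is a vacuous tautology and should be deleted, and the remark about the tail having ``exactly one term'' is moot since the tail is an infinite series of strictly positive terms; the strictness is already guaranteed by the strict inequality $a_{k+1}<\tfrac12 a_k$.
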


\section{Estimation of $\bigl(S_i(t)\bigr)$}\label{sect3}
For the sake of a compact representation the organization of this section is as follows. We first present the statements of the lemmas needed; then, in a separate subsection we present the proofs.

\subsection{The Lemmas \ref{S1lemma} to \ref{S4lemma}}

\begin{lemma}\label{S1lemma}
Let $S_1(t)$ be as in Definition \ref{newdef6}. Then for all $t\geq1$,
\begin{equation}\label{S1lemmaeqn}
-\frac{1}{8t^2}< \frac{S_1(t)}{(-1)^t\binom{-\frac{3}{2}}{t}}-\frac{(-1)^t}{\binom{-\frac{3}{2}}{t}}\bigl(\cosh(\alpha)-1\bigr)+\frac{1}{2t}\alpha\sinh(\alpha)< \frac{13}{25t^2}.
\end{equation}
\end{lemma}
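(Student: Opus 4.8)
The plan is to write $S_1(t)$, from Definition \ref{newdef6}, as a double sum and interchange the order of summation so that the inner variable becomes $s$ and the outer variable becomes $u$. Explicitly, since
$$S_1(t)=\sum_{s=1}^t\frac{(-1)^s(1/2-s)_{s+1}}{s}\sum_{u=1}^s\frac{(-1)^u(-s)_u}{(s+u)!(2u-1)!}\Bigl(\frac{\pi^2}{36}\Bigr)^u,$$
swapping gives $S_1(t)=\sum_{u=1}^t\frac{(-1)^u\alpha^{2u}}{(2u-1)!}\sum_{s=u}^t\frac{(-1)^s(1/2-s)_{s+1}(-s)_u}{s\,(s+u)!}$. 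The first task is to simplify the inner $s$-sum. I would use the identity $(1/2-s)_{s+1}=(-1)^{s}\,\text{(something)}\cdot\binom{-3/2}{s}\cdot s!$ type relations — more precisely, rewrite $(1/2-s)_{s+1}$, $(-s)_u$, and $(s+u)!$ in terms of binomial coefficients and factorials so that the summand telescopes or collapses. Here is where I expect to invoke \texttt{Sigma}: feed the inner sum $\sum_{s=u}^t(\cdots)$ to the creative-telescoping/summation machinery to obtain a closed form (or a short hypergeometric-type expression) in $t$ and $u$. The target closed form, judging from the shape of \eqref{S1lemmaeqn}, should be essentially $\frac{(-1)^t}{\binom{-3/2}{t}}$ times something like $\frac{2u+1}{2t}$ plus a lower-order correction — this is exactly the combination that Lemmas \ref{lem2} and \ref{lem3} are built to control.

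Once the inner sum is in closed form, the outer sum over $u$ from $1$ to $t$ splits into: (i) a main piece that, after summing the $u$-dependence, produces $\cosh(\alpha)-1$ and $\alpha\sinh(\alpha)$ via the evaluations in Lemma \ref{helperC2} (namely $\sum_u\frac{\alpha^{2u}}{(2u)!}=\cosh\alpha$ and $\sum_u\frac{u\alpha^{2u}}{(2u)!}=\frac12\alpha\sinh\alpha$, after absorbing the $(2u-1)!$ vs $(2u)!$ discrepancy by a factor of $2u$); and (ii) error pieces coming both from the lower-order correction in the inner-sum closed form and from truncating the infinite series for $\cosh$ and $\sinh$ at $u=t$. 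For the truncation tails I would apply Lemma \ref{helperC}, which bounds $\sum_{u=t+1}^\infty\frac{u^k\alpha^{2u}}{(2u)!}$ by $C_k/t^2$ for $k\in\{0,1,2,3\}$. For the correction terms I would use the two-sided bounds of Lemmas \ref{lem2} and \ref{lem3} to sandwich the relevant partial sums between the main term and the main term minus an explicit $O(u^3/t^2)$ quantity, then sum against $\frac{\alpha^{2u}}{(2u)!}$ and again use Lemma \ref{helperC2} to get explicit constants.

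Assembling all pieces, I would bound the total deviation
$$\frac{S_1(t)}{(-1)^t\binom{-3/2}{t}}-\frac{(-1)^t}{\binom{-3/2}{t}}\bigl(\cosh\alpha-1\bigr)+\frac{1}{2t}\alpha\sinh\alpha$$
from above and below by explicit expressions of the form $c/t^2$ with concrete rational constants multiplied by powers of $\alpha$, $\cosh\alpha$, $\sinh\alpha$; numerically evaluating these (with rigorous bounds $\alpha=\pi/6<0.524$, $\cosh\alpha<1.14$, $\sinh\alpha<0.551$) and checking they fall inside $(-\frac{1}{8t^2},\frac{13}{25t^2})$. A small point requiring care: the factor $\frac{1}{\binom{-3/2}{t}}$ is $O(t^{-1/2})$ (since $\binom{-3/2}{t}\sim \text{const}\cdot t^{1/2}$), so the middle term $\frac{(-1)^t}{\binom{-3/2}{t}}(\cosh\alpha-1)$ is itself $O(t^{-1/2})$, not $O(t^{-2})$; hence it is \emph{not} absorbed into the error — rather, the claim is that $S_1(t)/\bigl((-1)^t\binom{-3/2}{t}\bigr)$ equals that $O(t^{-1/2})$ main term minus $\frac{1}{2t}\alpha\sinh\alpha$ up to $O(t^{-2})$, and one must track the $\binom{-3/2}{t}^{-1}$ prefactor through the estimates rather than bounding it crudely. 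The main obstacle I anticipate is precisely getting \texttt{Sigma} to return a usable closed form for the inner $s$-sum whose error structure aligns cleanly with Lemmas \ref{lem2}–\ref{lem3}; once that closed form is in hand, the rest is a (lengthy but routine) bookkeeping of explicit constants, for which the small numerical slack between $-\frac{1}{8}$ and $\frac{13}{25}$ should be comfortably sufficient.
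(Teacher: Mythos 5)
Your proposal follows essentially the same route as the paper: interchange the order of summation, use \texttt{Sigma} to reduce the inner $s$-sum to a closed form of precisely the shape controlled by Lemmas \ref{lem2} and \ref{lem3}, evaluate the resulting $u$-sums via Lemma \ref{helperC2}, bound the tails with Lemma \ref{helperC}, and finish with explicit numerical estimates of the constants. Your remark about correctly tracking the $O(t^{-1/2})$ prefactor $\binom{-3/2}{t}^{-1}$ is also consistent with how the paper organizes the estimate, so the plan is sound and matches the published argument.
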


\begin{lemma}\label{S2lemma}
	Let $S_2(t)$ be as in Definition \ref{newdef7}. Then for all $t \geq 1$,
	\begin{equation}\label{S2lemmaeqn}
	-\frac{11}{10t}< \frac{S_2(t)}{\binom{-\frac{3}{2}}{t}}-\frac{(-1)^{t}}{\binom{-\frac{3}{2}}{t}}\cosh(\alpha)+\frac{\sinh(\alpha)}{\alpha}<\frac{1}{t}.
	\end{equation}
\end{lemma}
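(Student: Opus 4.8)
\textbf{Proof proposal for Lemma \ref{S2lemma}.}

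The plan is to imitate the strategy that (presumably) works for Lemma \ref{S1lemma}: isolate the inner $u$-sum in $S_2(t)$, bound it sharply via the explicit geometric/hyperbolic sums of Lemma \ref{helperC2} together with the tail estimate of Lemma \ref{helperC}, and then sum the $s$-index against the binomial weight $\binom{-3/2}{t-s-1}$. Recall
$$S_2(t)=\sum_{s=0}^{t-1}(1/2-s)_{s+1}\binom{-3/2}{t-s-1}\sum_{u=0}^{s}\dfrac{(-1)^u(-s)_u}{(s+u+1)!(2u)!}\Bigl(\dfrac{\pi^2}{36}\Bigr)^{u}.$$
First I would rewrite the inner sum $T(s):=\sum_{u=0}^{s}\frac{(-1)^u(-s)_u}{(s+u+1)!(2u)!}\alpha^{2u}$ by pulling out $\frac{1}{(s+1)!}$ and noting $\frac{(-1)^u(-s)_u}{(s+u+1)!/(s+1)!}=\frac{(-s)_u(-1)^u(s+1)!}{(s+u+1)!}$, so that the ratio is a rational function of $s$ that, by Lemma \ref{lem2} or an analogue tailored to the shift $s+u+1$, lies between $\frac{1}{2s+?}$-type bounds and a correction of size $O(u^2/s)$. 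Extending the $u$-sum to $\infty$ (controlled by Lemma \ref{helperC}, which costs $O(1/s^2)$) and invoking Lemma \ref{helperC2} for $\sum u^k\alpha^{2u}/(2u)!$, I expect to obtain a two-sided estimate
$$T(s)=\frac{1}{(s+1)!}\Bigl(\cosh\alpha\cdot\tfrac{1}{2s}\cdot(\text{something})+\dots\Bigr),$$
more precisely an identity of the shape $(s+1)!\,(1/2-s)_{s+1}\,T(s)=\pm\bigl(\cosh\alpha+\text{lower order in }1/s\bigr)$ once the Pochhammer prefactor $(1/2-s)_{s+1}$ is absorbed; here I would use that $(1/2-s)_{s+1}=(1/2-s)(3/2-s)\cdots(1/2)$ has a clean closed form in terms of $\binom{-1/2}{\cdot}$ or $\binom{2s}{s}$, matching the structure already exploited for $S_1$.

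The second stage is the convolution in $s$. After the first stage I would have, schematically,
$$S_2(t)=\sum_{s=0}^{t-1}\binom{-3/2}{t-s-1}\Bigl((-1)^s\cosh\alpha+(-1)^s\,r(s)\Bigr),$$
where $r(s)=O(1/s)$ carries the $\sinh\alpha$-type secondary term and a genuinely $O(1/s^2)$ remainder. The leading piece $\cosh\alpha\sum_{s=0}^{t-1}(-1)^s\binom{-3/2}{t-s-1}$ is a finite hypergeometric convolution that telescopes to something proportional to $\binom{-3/2}{t}$ — indeed $\sum_{s=0}^{t-1}(-1)^s\binom{-3/2}{t-1-s}=\sum_{j=0}^{t-1}\binom{-3/2}{j}(-1)^{t-1-j}$, a partial sum of the binomial series for $(1-1)^{-3/2}$ truncated, which one evaluates in closed form (it equals $(-1)^{t-1}\binom{-1/2}{t-1}$ by the standard partial-sum identity $\sum_{j=0}^{m}\binom{a}{j}(-1)^j=(-1)^m\binom{a-1}{m}$). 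Dividing through by $\binom{-3/2}{t}$ then produces exactly the terms $\frac{(-1)^t}{\binom{-3/2}{t}}\cosh\alpha$ and $-\frac{\sinh\alpha}{\alpha}$ claimed in \eqref{S2lemmaeqn}, with everything else collected into the error. The secondary term $-\sinh(\alpha)/\alpha$ (note it is $t$-independent) must come from the next order: the $O(1/s)$ correction $r(s)$, when convolved against $\binom{-3/2}{t-s-1}$ and normalized by $\binom{-3/2}{t}$, has a nonvanishing limit, and matching that limit to $-\sinh(\alpha)/\alpha$ is the computation that pins down the constants.

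For the error bounds $-\frac{11}{10t}<\cdots<\frac1t$ I would split $r(s)=r_1(s)+r_2(s)$ with $r_1(s)$ the explicit $\frac{1}{2s}\alpha\sinh\alpha$-type term and $r_2(s)=O(1/s^2)$ the Lemma \ref{helperC}/\ref{lem2} remainder, bound $\bigl|\binom{-3/2}{t-s-1}/\binom{-3/2}{t}\bigr|$ by a ratio of Pochhammers that is monotone in $s$ (using $\binom{-3/2}{m}=\binom{2m+1}{m}/(-4)^m\cdot$const, so the ratio is essentially $\prod$ of $(2j\pm1)/(2j\mp 1)$ factors, bounded by a small explicit constant), and then sum the geometric-type series in $1/s$ and $1/s^2$ over $s\le t-1$, comparing with $\int_1^t dx/x$ to extract the clean $1/t$ scaling. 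The explicit constants $11/10$ and $1$ would be verified by a finite numerical check for small $t$ (say $t\le t_0$ for an explicit $t_0$) combined with the asymptotic bound for $t>t_0$, exactly as one expects Lemma \ref{S1lemma} to be handled. The main obstacle will be the bookkeeping in the first stage: controlling the inner sum $T(s)$ with a two-sided bound that is simultaneously (i) tight enough that the convolution reproduces the sharp constant $\sinh(\alpha)/\alpha$ and not merely its order of magnitude, and (ii) simple enough (rational in $s$ with a Pochhammer prefactor) that the subsequent $s$-convolution against $\binom{-3/2}{t-s-1}$ stays in closed form; threading that needle is precisely where \texttt{Sigma}'s conversion of the inner sum into a definite-sum/hypergeometric closed form is indispensable, as the authors emphasize in Remark \ref{Remarkp(n)asymp}.
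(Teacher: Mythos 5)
Your plan keeps the original order of summation (inner sum over $u$ for fixed $s$, then a convolution over $s$), whereas the paper's proof begins by \emph{interchanging} the two sums, writing $S_2(t)=\sum_{u=0}^{t-1}\frac{(-1)^u\alpha^{2u}}{(2u)!}S_2(t,u)$ with $S_2(t,u)$ a pure Pochhammer/binomial sum over $s$; it is this $s$-sum that \texttt{Sigma} evaluates in closed form as $\binom{-3/2}{t}(-1)^{u+1}(A_{2,1}(t,u)+A_{2,2}(t,u))$, after which Lemmas \ref{lem2}, \ref{lem3}, \ref{helperC}, \ref{helperC2} finish the job. This difference is not cosmetic, and your version has two concrete problems.

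First, your schematic decomposition $S_2(t)=\sum_{s=0}^{t-1}\binom{-3/2}{t-s-1}\bigl((-1)^s\cosh\alpha+(-1)^s r(s)\bigr)$ with $r(s)=O(1/s)$ is quantitatively wrong. Since $(1/2-s)_{s+1}=\frac{(-1)^s(2s)!}{2^{2s+1}s!}$ and $T(s)\approx\frac{\cosh\alpha}{(s+1)!}$, the summand $(1/2-s)_{s+1}T(s)$ carries an extra factor of order $\binom{2s}{s}4^{-s}(s+1)^{-1}\sim s^{-3/2}$ beyond the sign $(-1)^s$; dropping it, as your "leading piece" does, produces $\cosh\alpha\sum_{j=0}^{t-1}(-1)^j\binom{-3/2}{j}=(-1)^{t-1}\cosh\alpha\binom{-5/2}{t-1}$ (not $\binom{-1/2}{t-1}$ as you wrote), which grows like $t^{3/2}$ and is therefore of order $t$ relative to $\binom{-3/2}{t}\sim\sqrt{t}$ — a divergent, not bounded, contribution. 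Second, and more fundamentally, even with the correct magnitudes, collapsing the inner $u$-sum to $\cosh\alpha$ plus an $O(1/s)$ error \emph{before} convolving over $s$ destroys exactly the information needed to produce the constant $-\sinh(\alpha)/\alpha$. In the paper that constant arises because the exact $s$-sum contributes a factor $A_{2,1}(t,u)\to\frac{1}{1+2u}$ \emph{per $u$-term}, converting $\sum_u\frac{\alpha^{2u}}{(2u)!}$ into $\sum_u\frac{\alpha^{2u}}{(2u+1)!}=\frac{\sinh\alpha}{\alpha}$; no single multiplicative constant applied to $\cosh\alpha$ can yield this, so the $u$-dependence must be carried intact through the $s$-summation. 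That is precisely what the interchange of summation order accomplishes, and it is why \texttt{Sigma} is applied to the $s$-sum $S_2(t,u)$ rather than to your $T(s)$. As written, your two-stage plan cannot recover the sharp secondary constant in \eqref{S2lemmaeqn}, only its order of magnitude.
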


\begin{lemma}\label{S3lemma}
Let $S_3(t)$ be as in Definition \ref{newdef8}. Then for all $t\geq 2$,
\begin{equation}\label{S3lemmaeqn}
-\frac{71}{100t}< \frac{S_3(t)}{\binom{-\frac{3}{2}}{t}}+\frac{(-1)^{t}}{\binom{-\frac{3}{2}}{t}}\alpha\sinh(\alpha)+1-\cosh(\alpha)< \frac{12}{25t}.
\end{equation}	
\end{lemma}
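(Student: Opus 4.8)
\textbf{Proof proposal for Lemma \ref{S3lemma}.}

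The plan is to mimic the strategy that was (presumably) used for Lemmas \ref{S1lemma} and \ref{S2lemma}: reduce the double sum $S_3(t)$ to a closed form plus an explicit remainder by first handling the inner $u$-sum, then the outer $s$-sum, using the preliminary lemmas of Section \ref{sect2}. Recall
\[
S_3(t)=\sum_{s=1}^{t}\frac{(1/2-s)_{s+1}\binom{-3/2}{t-s}}{s}\sum_{u=1}^{s}\frac{(-1)^u(-s)_u}{(s+u)!\,(2u-1)!}\Bigl(\frac{\pi^2}{36}\Bigr)^u .
\]
First I would rewrite the inner summand: since $(2u-1)!=(2u)!/(2u)$ and $(\pi^2/36)^u=\alpha^{2u}$, the inner sum equals $\sum_{u=1}^s \frac{2u(-1)^u(-s)_u}{(s+u)!\,(2u)!}\alpha^{2u}$. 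Pulling out $1/s!$ and writing $(-1)^u(-s)_u=(s)_u\cdot(-1)^u\cdot\frac{(-s)_u}{(s)_u}$ — or more directly using the elementary identity $\frac{(-s)_u}{(s+u)!}=\frac{(-1)^u}{s!}\binom{s}{u}\frac{s!}{(s+u)!}\cdot(\text{stuff})$ — one brings the inner sum into the shape controlled by Lemmas \ref{lem2} and \ref{lem3}. Concretely, the combination $\frac{t(-t)_u(-1)^u}{(1+2t)(t+u)(t)_u}$ appearing in Lemma \ref{lem2} and the partial-sum version in Lemma \ref{lem3} are exactly designed so that, after normalizing by $(1/2-s)_{s+1}$ (which carries the $1/(1+2s)$ and the Pochhammer weight), the inner sum becomes $\frac{1}{2s}\sum_{u\ge 1}\frac{2u\,\alpha^{2u}}{(2u)!}$ plus a controlled error; by Lemma \ref{helperC2} the leading series is $\frac12\alpha\sinh\alpha$, so the inner sum $\approx \frac{\alpha\sinh\alpha}{2s}\cdot(\text{normalization})$, with the error of size $O(1/s^2)$ coming from the $\frac{u^2+1/2}{t}$-type corrections in Lemma \ref{lem2} and the tail estimate in Lemma \ref{helperC} (with $k=0,1$).

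Next, I would substitute this expansion into the outer sum. The dominant contribution is
\[
-\alpha\sinh(\alpha)\sum_{s=1}^{t}\frac{(\text{normalized }(1/2-s)_{s+1})}{2s^2}\binom{-3/2}{t-s},
\]
and I expect the normalized Pochhammer factor to simplify against $\binom{-3/2}{t-s}$ so that, up to lower-order terms, $\sum_{s=1}^t (\cdots)\binom{-3/2}{t-s}$ telescopes or resums into $\binom{-3/2}{t}$ times a constant — this is the mechanism producing the $-\frac{(-1)^t}{\binom{-3/2}{t}}\alpha\sinh\alpha$ main term in \eqref{S3lemmaeqn} together with the $\cosh\alpha-1$ from the $s=t$ boundary / the $\frac1s$ vs $\frac1{s^2}$ split. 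At this stage \texttt{Sigma} is used to turn the remaining $u$- and $s$-sums (after the Lemma \ref{lem2}/\ref{lem3} bounds are applied) into genuine definite sums with closed evaluations, and the residual tails are bounded using Lemmas \ref{BPRZ}--\ref{BPRZ2} to show the total remainder lies strictly between $-\frac{71}{100t}$ and $\frac{12}{25t}$ for all $t\ge 2$. The rational constants $\tfrac{71}{100}$ and $\tfrac{12}{25}$ are then obtained by adding up the explicit constants $C_k=\alpha^4 2^k/18$ from Lemma \ref{helperC}, the $b_{k,n}$, $\beta_{m,n}$ bounds, and the numerical values of $\cosh\alpha$, $\sinh\alpha$, $\alpha=\pi/6$, and checking the small cases $t=2,3,\dots$ up to whatever threshold the asymptotic estimate takes over.

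The main obstacle is the bookkeeping in the second paragraph: identifying the precise cancellation that collapses $\sum_{s=1}^t(\text{weight})\binom{-3/2}{t-s}$ down to a multiple of $\binom{-3/2}{t}$ plus an $O(1/t)$ error, and then propagating \emph{explicit} (not merely $O$-) bounds through both layers of summation so that the final constants come out below $\tfrac{71}{100}$ and $\tfrac{12}{25}$. Unlike Lemma \ref{S1lemma}, where the error was $O(1/t^2)$, here the $\binom{-3/2}{t-s}$ with $s$ near $t$ contributes an honest $1/t$-term, so one must be careful not to lose a constant factor when splitting the outer sum into its bulk ($s$ small) and boundary ($s$ close to $t$) ranges; the boundary range is where Lemma \ref{lem3}'s cubic-in-$u$ correction $\frac{4u^3+6u^2+8u+3}{12t^2}$ and Lemma \ref{helperC} with $k=2,3$ become essential. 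Once the reduction is set up, the verification of the constants is routine (and machine-assisted), so I would present the reduction in detail and delegate the constant-chasing to \texttt{Sigma} together with a short interval-arithmetic check of the finitely many initial $t$.
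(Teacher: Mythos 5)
There is a genuine gap, and it sits exactly where you flag "the main obstacle": the collapse of the $s$-sum against $\binom{-3/2}{t-s}$ is never established, only "expected", and the order in which you propose to do the two summations makes the surrounding error analysis break down. The paper's proof does the opposite of what you propose: it first interchanges the order of summation (see \eqref{S3lemprfeqn1}), so that the \emph{outer} sum is over $u$ with the rapidly decaying weights $\alpha^{2u}/(2u-1)!$, and the \emph{inner} sum over $s$ — the convolution with $\binom{-3/2}{t-s-u}$ — is evaluated \emph{exactly} by \texttt{Sigma} in the closed form \eqref{S3lemprfeqn2}, namely $S_3(t,u)=\binom{-3/2}{t}(-1)^u\bigl(A_{3,1}(t,u)+A_{3,2}(t,u)\bigr)$. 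Only after this exact collapse are Lemmas \ref{lem2} and \ref{lem3} applied, and they are applied with the \emph{large} parameter $t$ (not $s$), so their $O(u^2/t)$ and $O(u^3/t^2)$ corrections really are lower order once weighted by $\alpha^{2u}/(2u)!$ and summed over $u$ via Lemmas \ref{helperC2} and \ref{helperC}.

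Your route — approximate the inner $u$-sum by $\tfrac{\alpha\sinh\alpha}{2s}$ with relative error $O(1/s)$ using Lemma \ref{lem2} with $s$ in the role of $t$, then sum over $s$ — fails structurally, not just in bookkeeping. Since $(1/2-s)_{s+1}/(s\cdot s!)\sim (-1)^s/(2\sqrt{\pi}\,s^{3/2})$ and $\binom{-3/2}{t-s}\sim(-1)^{t-s}2\sqrt{t-s}/\sqrt{\pi}$, the outer $s$-sum is dominated by $s=O(1)$ (it behaves like $\sqrt{t}\sum_s s^{-3/2}$), precisely the range where an asymptotic-in-$s$ approximation of the inner sum carries $O(1)$ relative error; the propagated error is therefore of the same order as the main term $\binom{-3/2}{t}(\cosh\alpha-1)$, and no constant-chasing can recover an inequality of the stated precision. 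Relatedly, your identification of the leading terms is reversed: the term $\binom{-3/2}{t}(\cosh\alpha-1)$ (which grows like $\sqrt{t}$) is dominant and the $-(-1)^t\alpha\sinh\alpha$ piece is the bounded correction, not the other way around; and Lemmas \ref{BPRZ}--\ref{BPRZ2} play no role here (they bound tails in powers of $1/(24n)$ and enter only in Section \ref{sec7}). To repair the proof you must first perform the summation interchange and obtain the exact \texttt{Sigma} evaluation of the $s$-sum; everything downstream then follows the pattern of Lemmas \ref{S1lemma} and \ref{S2lemma} as you anticipated.
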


\begin{lemma}\label{S4lemma}
Let $S_4(t)$ be as in Definition \ref{newdef9}. Then for $t \geq 1$,
\begin{equation}\label{S4lemmaeqn}
-\frac{1}{3t^2}< \frac{S_4(t)}{(-1)^t\binom{-\frac{3}{2}}{t}}-\frac{(-1)^t}{\binom{-\frac{3}{2}}{t}}\frac{\sinh(\alpha)}{\alpha}+\frac{1}{2t}\cosh(\alpha)< \frac{13}{20t^2}.
\end{equation}
\end{lemma}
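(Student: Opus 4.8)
The plan is to follow the template of the proofs of Lemmas~\ref{S1lemma}--\ref{S3lemma}; moreover, since $S_4(t)$ has the same inner sum $\sum_{u=0}^{s}\frac{(-1)^u(-s)_u}{(s+u+1)!(2u)!}\alpha^{2u}$ as $S_2(t)$, the analysis of that inner sum carried out for Lemma~\ref{S2lemma} can be reused. First I would clear the Pochhammer symbols: for $0\le u\le s$ one has $(-1)^u(-s)_u=s!/(s-u)!$ and $(-1)^s(1/2-s)_{s+1}=(2s)!/(2^{2s+1}s!)$, which turns the double sum into
\begin{equation*}
S_4(t)=\sum_{u=0}^{t}\frac{\alpha^{2u}}{(2u)!}\,\Phi(u,t),\qquad \Phi(u,t):=\sum_{s=u}^{t}\frac{(2s)!}{2^{2s+1}(s-u)!(s+u+1)!}.
\end{equation*}
In particular the outer weight is precisely of the shape $\alpha^{2u}/(2u)!$ to which Lemmas~\ref{helperC2} and~\ref{helperC} apply.

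Next I would determine the limiting value. Letting $t\to\infty$ and using the Gauss evaluation ${}_2F_1(u+\tfrac12,u+1;2u+2;1)=2\cdot 4^{u}$ (equivalently, a creative-telescoping computation with \texttt{Sigma}) gives $\Phi(u,\infty)=\frac1{2u+1}$, hence $S_4(\infty)=\sum_{u\ge 0}\frac{\alpha^{2u}}{(2u+1)!}=\frac{\sinh\alpha}{\alpha}$. Since all summands of $S_4(t)$ are positive, $S_4(t)$ increases monotonically to $S_4(\infty)$, so $S_4(t)=\frac{\sinh\alpha}{\alpha}-R(t)$ with $R(t)=\sum_{u\ge 0}\frac{\alpha^{2u}}{(2u)!}\bigl(\Phi(u,\infty)-\Phi(u,t)\bigr)>0$. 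Using $(-1)^t\binom{-\frac{3}{2}}{t}=\frac{(2t+1)!}{4^t(t!)^2}$, the expression to be bounded equals $\frac{\cosh\alpha}{2t}-\frac{4^t(t!)^2}{(2t+1)!}R(t)$, so it suffices to prove
\begin{equation*}
\frac{4^t(t!)^2}{(2t+1)!}\,R(t)=\frac{\cosh\alpha}{2t}+\delta(t),\qquad -\frac{13}{20t^2}<\delta(t)<\frac1{3t^2}.
\end{equation*}

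The main step is the estimation of $R(t)$. The indices $u>t$ (for which $\Phi(u,t)=0$) contribute $\sum_{u>t}\frac{\alpha^{2u}}{(2u+1)!}$, which after multiplication by $\frac{4^t(t!)^2}{(2t+1)!}$ is super-exponentially small and hence negligible against $t^{-2}$ (controlled by a Lemma~\ref{helperC}-type bound). For $u\le t$ one has $\Phi(u,\infty)-\Phi(u,t)=\sum_{s\ge t+1}\frac{(2s)!}{2^{2s+1}(s-u)!(s+u+1)!}$; the slice $u=0$ sums in closed form to $\frac{\binom{2t+1}{t}}{2\cdot 4^{t}}$, so after multiplication by $\frac{4^t(t!)^2}{(2t+1)!}$ it contributes exactly $\frac1{2(t+1)}=\frac1{2t}-\frac1{2t(t+1)}$. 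For $u\ge 1$, \texttt{Sigma} supplies a hypergeometric closed form for this tail (in the worst case, Lemmas~\ref{lem1}--\ref{lem3} supply sharp two-sided bounds), which under the same normalization becomes $\frac1{2t}$ plus an $O(\mathrm{poly}(u)/t^2)$ term of explicit sign and constant. Summing $\sum_{u}\frac{\alpha^{2u}}{(2u)!}\bigl(\frac1{2t}+O(\mathrm{poly}(u)/t^2)\bigr)$ in closed form with Lemma~\ref{helperC2}, controlling the overhang at $u>t$ with Lemma~\ref{helperC} and the residual binomial tails with Lemmas~\ref{BPRZ}--\ref{BPRZ2}, and upgrading the resulting per-$t$ estimate to one valid for all $t\ge 1$ by the monotonicity argument of Lemma~\ref{mainlemma2}, one obtains the displayed control on $\delta(t)$. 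The numerical values of $\cosh\alpha$ and $\sinh\alpha$ at $\alpha=\pi/6$ enter only in the final constant bookkeeping, and the finitely many small $t$ not covered by the asymptotic estimate are verified directly.

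The hard part will be the precise estimation, uniformly in $u$, of the hypergeometric tail $\sum_{s\ge t+1}\frac{(2s)!}{2^{2s+1}(s-u)!(s+u+1)!}$, together with the ensuing constant bookkeeping: a qualitative expansion $\frac{\cosh\alpha}{2t}+O(1/t^2)$ is routine, but extracting the sharp rational constants $-\tfrac13$ and $\tfrac{13}{20}$ forces one to carry the exact closed forms produced by \texttt{Sigma} through every intermediate inequality and to track carefully the interplay between the regimes $u\le t$ and $u>t$.
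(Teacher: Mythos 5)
Your setup is essentially the paper's: you interchange the two summations so that the weight $\alpha^{2u}/(2u)!$ sits outside, obtain a closed/hypergeometric form for the inner sum over $s$ via \texttt{Sigma}, and then feed the result into Lemmas~\ref{helperC2} and~\ref{helperC}. The only structural difference is cosmetic: you work with the tail $\Phi(u,\infty)-\Phi(u,t)$ (using $\Phi(u,\infty)=\tfrac{1}{2u+1}$, whence the limit $\sinh(\alpha)/\alpha$ and the main term $\cosh(\alpha)/(2t)$), whereas the paper works directly with the partial sum $S_4(t,u)=(-1)^u\Phi(u,t)$, for which \texttt{Sigma} returns the representation \eqref{S4lemprfeqn2} in terms of $A_{4,1}(t,u)$, $A_{4,2}(t,u)$ and the indefinite sum $\sum_{i=1}^u\frac{(-1)^i(-t)_i}{(t+i)(t)_i}$. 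Since the two parametrizations carry the same information, your route would go through. Your reductions that I could check are correct: $(-1)^t\binom{-3/2}{t}=\frac{(2t+1)!}{4^t(t!)^2}$, the Gauss evaluation giving $\Phi(u,\infty)=\frac{1}{2u+1}$, and the exact $u=0$ contribution $\frac{1}{2(t+1)}$.

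The substantive shortfall is that the lemma \emph{is} its constants, and your proposal asserts rather than derives them. The statement ``under the same normalization \texttt{Sigma}'s closed form becomes $\frac{1}{2t}$ plus an $O(\mathrm{poly}(u)/t^2)$ term of explicit sign and constant'' is exactly the step where all the work lives: in the paper this is the pair of two-sided bounds \eqref{a41eq} and \eqref{a42ineq2} obtained from Lemmas~\ref{lem1}--\ref{lem3} (note that $A_{4,1}(t,u)$ is itself already $O(1/t^2)$, which is why the $1/t$ coefficient is exactly $-\tfrac12\cosh(\alpha)$ with no contribution from $A_{4,1}$), followed by the closed-form evaluations of $\sum_u u^k\alpha^{2u}/(2u)!$ from Lemma~\ref{helperC2}, the overhang bounds \eqref{S4lemprfeqn6}--\eqref{S4lemprfeqn7} from Lemma~\ref{helperC}, and the final numerical checks that compress the resulting expressions into $-\tfrac13$ and $\tfrac{13}{20}$. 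Until you write down the explicit polynomial corrections (with signs) and push them through this bookkeeping, you have a correct plan but not a proof of the displayed inequality. Two minor misattributions: Lemmas~\ref{BPRZ}--\ref{BPRZ2} are not needed here (they enter only in the error-bound section), and Lemma~\ref{mainlemma2} is used only inside the proof of Lemma~\ref{helperC}, not as a separate ``upgrade to all $t\ge1$'' step; the bounds from Lemmas~\ref{lem1}--\ref{lem3} are already uniform in $t\ge 1$ and $u\le t$, so no finite verification of small $t$ is required.
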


\subsection{The Proofs of Lemma \ref{S1lemma} to \ref{S4lemma}}

\emph{Proof of Lemma \ref{S1lemma}:}
 We rewrite $S_1(t)$ as follows:
 \begin{eqnarray}\label{S1lemprfeqn1}
S_1(t)&=&\sum_{u=1}^t\frac{(-1)^u\alpha^{2u}}{(2u-1)!} \sum_{s=u}^t\frac{(-1)^s}{s} \Bigl(\frac{1}{2} - s\Bigr)_{s + 1}\frac{(-s)_u}{(s + u)!}\nonumber\\
&=& \sum_{u=1}^t\frac{(-1)^u\alpha^{2u}}{(2u-1)!}\underbrace{\sum_{s=0}^{t-u}\frac{(-1)^{s+u}}{s+u} \Bigl(\frac{1}{2} - s-u\Bigr)_{s+u + 1}\frac{(-s-u)_u}{(s + 2u)!}\nonumber}_{=:S_1(t,u)}.\\
 \end{eqnarray}
We use the summation package \texttt{Sigma} (and its mechanization by \texttt{EvaluateMultiSums})\footnote[2]{For further explanations of this rigorous computer derviation we refer to Appendix \ref{reductionbySigma} and Remark \ref{remarkappendix}.}, to derive and prove that
\begin{equation}\label{S1lemprfeqn2}
 S_1(t,u)=(-1)^t\binom{-\frac{3}{2}}{t}\frac{(-1)^u}{2u}A_1(t,u),
\end{equation}
where
\begin{equation*}
A_1(t,u)=\frac{t(-t)_u(-1)^u}{(1+2t)(t+u)(t)_u}-\Biggl(\frac{(-1)^{t+1}}{\binom{-\frac{3}{2}}{t}}+\frac{1}{(1+2t)}+\frac{2t}{1+2t}\sum_{i=1}^u\frac{(-t)_i(-1)^i}{(t+i)(t)_i}\Biggr).
\end{equation*}
Now by Lemmas \ref{lem2} and \ref{lem3}, 
\begin{equation*}
\frac{1}{2t}+\frac{(-1)^t}{\binom{-\frac{3}{2}}{t}}-\frac{2u+1}{2t}-\frac{u^2+\frac{1}{2}}{2t^2} \leq A_1(t,u) \leq \frac{1}{2t}+\frac{(-1)^t}{\binom{-\frac{3}{2}}{t}}-\frac{2u+1}{2t}+\frac{4u^3+6u^2+8u+3}{12t^2}.
\end{equation*}
It is convenient to reorder the terms in this inequality with respect to the powers of $u$:
\begin{equation}\label{ss11d}
 \frac{(-1)^t}{\binom{-\frac{3}{2}}{t}}-\frac{1}{4t^2}-\frac{u}{t}-\frac{u^2}{2t^2} \leq A_1(t,u)\leq \frac{(-1)^t}{\binom{-\frac{3}{2}}{t}}+\frac{1}{4t^2}+u\Bigl(\frac{2}{3t^2}-\frac{1}{t}\Bigr)+\frac{u^2}{2t^2}+\frac{u^3}{3t^2}.
  \end{equation}
Combining \eqref{S1lemprfeqn1} and \eqref{S1lemprfeqn2}, if follows that
\begin{equation}\label{S1lemprfeqn3}
S_1(t)=(-1)^t\binom{-\frac{3}{2}}{t}\sum_{u=1}^t\frac{\alpha^{2u}A_1(t,u)}{(2u)!}.
\end{equation}
To derive a lower bound, combine \eqref{ss11d} with \eqref{S1lemprfeqn3} to get
\begingroup
\allowdisplaybreaks
\begin{eqnarray}\label{S1lemprfeqn4}
\dfrac{S_1(t)}{(-1)^t\binom{-\frac{3}{2}}{t}}\hspace{-0.25 cm}&\geq&\hspace{-0.25 cm} \Biggl(\frac{(-1)^t}{\binom{-\frac{3}{2}}{t}}-\frac{1}{4t^2}\Biggr)\sum_{u=1}^t\frac{\alpha^{2u}}{(2u)!}-\frac{1}{t}\sum_{u=1}^t\frac{u \alpha^{2u}}{(2u)!}-\frac{1}{2t^2}\sum_{u=1}^t\frac{u^2 \alpha^{2u}}{(2u)!}\nonumber\\
\hspace{-0.25 cm}&\geq&\hspace{-0.25 cm} \Biggl(\frac{(-1)^t}{\binom{-\frac{3}{2}}{t}}-\frac{1}{4t^2}\Biggr)\Biggl(\sum_{u=0}^{\infty}\frac{\alpha^{2u}}{(2u)!}-1-\sum_{u=t+1}^{\infty}\frac{\alpha^{2u}}{(2u)!}\Biggr)-\frac{1}{t}\sum_{u=0}^{\infty}\frac{u \alpha^{2u}}{(2u)!}-\frac{1}{2t^2}\sum_{u=0}^{\infty}\frac{u^2 \alpha^{2u}}{(2u)!}.\nonumber\\
\hspace{-0.25 cm}&>&\hspace{-0.25 cm} \Biggl(\frac{(-1)^t}{\binom{-\frac{3}{2}}{t}}-\frac{1}{4t^2}\Biggr)\Biggl(\sum_{u=0}^{\infty}\frac{\alpha^{2u}}{(2u)!}-1-\frac{\alpha^4}{18t^2}\Biggr)-\frac{1}{t}\sum_{u=0}^{\infty}\frac{u \alpha^{2u}}{(2u)!}-\frac{1}{2t^2}\sum_{u=0}^{\infty}\frac{u^2 \alpha^{2u}}{(2u)!}\nonumber\\
\hspace{-0.25 cm}&&\hspace{-0.25 cm} \hspace{4.5 cm} \Biggl(\text{by Lemma\ \ref{helperC}\ \  and}\ \ \frac{(-1)^t}{\binom{-\frac{3}{2}}{t}}>\frac{1}{4t^2}\ \ \text{for all}\ \ t\geq 1\Biggr)\nonumber\\
\hspace{-0.25 cm}&=&\hspace{-0.25 cm}  \Biggl(\frac{(-1)^t}{\binom{-\frac{3}{2}}{t}}-\frac{1}{4t^2}\Biggr)\Bigl(\cosh(\alpha)-1-\frac{\alpha^4}{18t^2}\Bigr)-\frac{1}{2t}\alpha\sinh(\alpha)-\frac{1}{2t^2}\Bigl(\frac{\alpha^2}{4}\cosh(\alpha)+\frac{\alpha}{4}\sinh(\alpha)\Bigr)\nonumber\\
\hspace{-0.25 cm}&&\hspace{-0.25 cm}  \hspace{10 cm}\ \ \Bigl(\text{by Lemma\ \ref{helperC2}}\Bigr)\nonumber\\
\hspace{-0.25 cm}&>&\hspace{-0.25 cm} \Biggl(\frac{(-1)^t}{\binom{-\frac{3}{2}}{t}}-\frac{1}{4t^2}\Biggr)\Bigl(\cosh(\alpha)-1\Bigr)-\frac{\alpha^4}{18t^2}-\frac{1}{2t}\alpha\sinh(\alpha)-\frac{1}{2t^2}\Bigl(\frac{\alpha^2}{4}\cosh(\alpha)+\frac{\alpha}{4}\sinh(\alpha)\Bigr)\nonumber\\
\hspace{-0.25 cm}&&\hspace{-0.25 cm} \hspace{7 cm} \Biggl(\text{as}\ \ \frac{(-1)^t}{\binom{-\frac{3}{2}}{t}}-\frac{1}{4t^2}<1\ \ \text{for all}\ \ t\geq 1\Biggr)\nonumber\\
\hspace{-0.25 cm}&=&\hspace{-0.25 cm} \frac{(-1)^t}{\binom{-\frac{3}{2}}{t}}(\cosh(\alpha)-1)-\frac{1}{2t}\alpha\sinh(\alpha)-\frac{1}{2t^2}\Biggl(\frac{\cosh(\alpha)-1}{2}+\frac{\alpha^4}{9}+\frac{\alpha^2}{4}\cosh(\alpha)+\frac{\alpha}{4}\sinh(\alpha)\Biggr).\nonumber\\
\hspace{-0.25 cm}&>&\hspace{-0.25 cm} \frac{(-1)^t}{\binom{-\frac{3}{2}}{t}}(\cosh(\alpha)-1)-\frac{1}{2t}\alpha\sinh(\alpha)-\frac{1}{8t^2}\nonumber\\
\hspace{-0.25 cm}&&\hspace{-0.25 cm} \hspace{4.5 cm} \Biggl(\text{as}\ \frac{\cosh(\alpha)-1}{2}+\frac{\alpha^4}{9}+\frac{\alpha^2}{4}\cosh(\alpha)+\frac{\alpha}{4}\sinh(\alpha)<\frac{1}{4} \Biggr).\nonumber\\
\end{eqnarray}
\endgroup 
Similarly, for the upper bound,  we have for all $t\geq 1$,
\begingroup
\allowdisplaybreaks
\begin{eqnarray}\label{S1lemprfeqn5}
\hspace{-0.25 cm}&&\hspace{-0.25 cm}\frac{S_1(t)}{(-1)^t\binom{-\frac{3}{2}}{t}}\nonumber\\
\hspace{-0.25 cm}&\leq &\hspace{-0.25 cm} \frac{(-1)^t}{\binom{-\frac{3}{2}}{t}}\sum_{u=1}^t\frac{\alpha^{2u}}{(2u)!}-\frac{1}{t}\sum_{u=1}^t\frac{u\alpha^{2u}}{(2u)!}+\frac{1}{4t^2}\sum_{u=1}^t\frac{\alpha^{2u}}{(2u)!}+\frac{2}{3t^2}\sum_{u=1}^t\frac{u\alpha^{2u}}{(2u)!}+\frac{1}{2t^2}\sum_{u=1}^t\frac{u^2\alpha^{2u}}{(2u)!}+\frac{1}{3t^2}\sum_{u=1}^t\frac{u^3\alpha^{2u}}{(2u)!}\nonumber\\
\hspace{-0.25 cm}&=&\hspace{-0.25 cm}\frac{(-1)^t}{\binom{-\frac{3}{2}}{t}}\sum_{u=1}^{\infty}\frac{\alpha^{2u}}{(2u)!}-\frac{1}{t}\sum_{u=0}^{\infty}\frac{u\alpha^{2u}}{(2u)!}+\frac{1}{t}\sum_{u=t+1}^{\infty}\frac{u\alpha^{2u}}{(2u)!}+\frac{1}{4t^2}\sum_{u=0}^{\infty}\frac{\alpha^{2u}}{(2u)!}\nonumber\\
\hspace{-0.25 cm}&&\hspace{-0.25 cm} \hspace{5 cm} +\frac{2}{3t^2}\sum_{u=0}^{\infty}\frac{u\alpha^{2u}}{(2u)!}+\frac{1}{2t^2}\sum_{u=0}^{\infty}\frac{u^2\alpha^{2u}}{(2u)!}+\frac{1}{3t^2}\sum_{u=0}^{\infty}\frac{u^3\alpha^{2u}}{(2u)!}\nonumber\\
\hspace{-0.25 cm}&\leq &\hspace{-0.25 cm} \frac{(-1)^t}{\binom{-\frac{3}{2}}{t}}(\cosh(\alpha)-1)-\frac{1}{2t}\alpha\sinh(\alpha)+\frac{\alpha^4}{9t^3}+\frac{1}{4t^2}\cosh(\alpha)+\frac{1}{3t^2}\alpha\sinh(\alpha)\nonumber\\
\hspace{-0.25 cm}&&\hspace{-0.25 cm} \hspace{2 cm} +\frac{1}{2t^2}\Biggl(\frac{\alpha^2}{4}\cosh(\alpha)+\frac{\alpha}{4}\sinh(\alpha)\Biggr)+\frac{1}{3t^2}\Biggl(\frac{3\alpha^2}{8}\cosh(\alpha)+\frac{\alpha(\alpha^2+1)}{8}\sinh(\alpha)\Biggr)\nonumber\\
\hspace{-0.25 cm}&&\hspace{-0.25 cm} \hspace{9.5 cm} \Bigl(\text{by Lemmas\ \ref{helperC2}\ and\ \ \ \ref{helperC}}\Bigr)\nonumber\\
\hspace{-0.25 cm}&=&\hspace{-0.25 cm} \frac{(-1)^t}{\binom{-\frac{3}{2}}{t}}(\cosh(\alpha)-1)-\frac{1}{2t}\alpha\sinh(\alpha)+\frac{1}{t^2}\Biggl(\frac{\alpha^4}{9t}+\frac{\alpha^2+1}{4}\cosh(\alpha)+\frac{\alpha(\alpha^2+12)}{24}\sinh(\alpha)\Biggr)\nonumber\\
\hspace{-0.25 cm}&\leq&\hspace{-0.25 cm} \frac{(-1)^t}{\binom{-\frac{3}{2}}{t}}(\cosh(\alpha)-1)-\frac{1}{2t}\alpha\sinh(\alpha)+\frac{1}{t^2}\Biggl(\frac{\alpha^4}{9}+\frac{\alpha^2+1}{4}\cosh(\alpha)+\frac{\alpha(\alpha^2+12)}{24}\sinh(\alpha)\Biggr)\nonumber\\
\hspace{-0.25 cm}&<&\hspace{-0.25 cm} \frac{(-1)^t}{\binom{-\frac{3}{2}}{t}}(\cosh(\alpha)-1)-\frac{1}{2t}\alpha\sinh(\alpha)+\frac{13}{25 t^2}\nonumber\\
\hspace{-0.25 cm}&&\hspace{-0.25 cm} \hspace{5 cm} \Biggl(\text{as}\ \frac{\alpha^4}{9}+\frac{\alpha^2+1}{4}\cosh(\alpha)+\frac{\alpha(\alpha^2+12)}{24}\sinh(\alpha)<\frac{13}{25} \Biggr).
\end{eqnarray}
\endgroup 
By \eqref{S1lemprfeqn4} and \eqref{S1lemprfeqn5}, for all $t\geq 1$, it follows that
\begin{equation}\label{sum1final}
-\frac{1}{8t^2}< \frac{S_1(t)}{(-1)^t\binom{-\frac{3}{2}}{t}}-\frac{(-1)^t}{\binom{-\frac{3}{2}}{t}}(\cosh(\alpha)-1)+\frac{1}{2t}\alpha\sinh(\alpha)< \frac{13}{25t^2},
\end{equation}
which concludes the proof.
\qed

\emph{Proof of Lemma \ref{S2lemma}:}
	Rewrite $S_2(t)$ as follows:
	\begin{eqnarray}\label{S2lemprfeqn1}
	S_2(t)&=&\sum_{u=0}^{t-1}\frac{(-1)^u\alpha^{2u}}{(2u)!}\sum_{s=u}^{t-1}\Bigl(\frac{1}{2} - s\Bigr)_{s + 1} \binom{-\frac{3}{2}}{t - s - 1}\frac{(-s)_u}{(s + u + 1)!}\nonumber\\
	&=&\sum_{u=0}^{t-1}\frac{(-1)^u\alpha^{2u}}{(2u)!}\underbrace{\sum_{s=0}^{t-u-1}\Bigl(\frac{1}{2} - s-u\Bigr)_{s+u+1} \binom{-\frac{3}{2}}{t - s - u - 1}\frac{(-s-u)_u}{(s + 2u + 1)!}}_{=:S_2(t,u)}\nonumber\\
	\end{eqnarray}
	Using the summation package \texttt{Sigma} (and its mechanization by \texttt{EvaluateMultiSums})\footnote[3]{We refer again to Appendix \ref{reductionbySigma} and Remark \ref{remarkappendix} to see the underlying machinery in action.} we derive and prove that
	\begin{equation}\label{S2lemprfeqn2}
	S_2(t,u)=\binom{-\frac{3}{2}}{t}(-1)^{u+1} \Bigl(A_{2,1}(t,u)+A_{2,2}(t,u)\Bigr),
	\end{equation}
	where
	\begin{equation*}
	\hspace{-2.2 cm}
	A_{2,1}(t,u)=\frac{2t(t-u)(-t)_u(-1)^{u}}{(1+2t)(1+2u)(t+u)(t)_u}
	\end{equation*}
	and
	\begin{equation*}
	\hspace{-0.5 cm}A_{2,2}(t,u)=\frac{(-1)^{t+1}}{\binom{-\frac{3}{2}}{t}}+\frac{1}{1+2t}+\frac{2t}{1+2t}\sum_{i=1}^u\frac{(-1)^i(-t)_i}{(t+i)(t)_i}.
	\end{equation*}
	From \eqref{S2lemprfeqn1} and \eqref{S2lemprfeqn2} it follows that
	\begin{equation}\label{S2lemprfeqn3}
	S_2(t)=-\binom{-\frac{3}{2}}{t}\Bigl(s_{2,1}(t)+s_{2,2}(t)\Bigr),
	\end{equation}
	where
	\begin{equation}\label{S2lemprfeqn4}
	s_{2,1}(t)=\sum_{u=0}^{t-1}\frac{\alpha^{2u}}{(2u)!}A_{2,1}(t,u)\ \ \text{and}\ \ s_{2,2}(t)=\sum_{u=0}^{t-1}\frac{\alpha^{2u}}{(2u)!}A_{2,2}(t,u).
	\end{equation}
	By Lemma \ref{lem2}, we have
	\begin{equation}\label{sum2part1}
	\frac{1}{1+2u}-\frac{u^2+u+\frac{1}{2}}{t(1+2u)}\leq \frac{t-u}{t(1+2u)}\Biggl(1-\frac{u^2+\frac{1}{2}}{t}\Biggr)\leq A_{2,1}(t,u)\leq \frac{t-u}{t(1+2u)} .
	\end{equation}
	Plugging \eqref{sum2part1} into \eqref{S2lemprfeqn4} we obtain
	\begin{equation*}
	\sum_{u=0}^{t-1}\frac{\alpha^{2u}}{(2u+1)!}-\frac{1}{t}\sum_{u=0}^{t-1}\frac{u^2+u+\frac{1}{2}}{(2u+1)!}\alpha^{2u} \leq s_{2,1}(t) \leq \sum_{u=0}^{t-1}\frac{\alpha^{2u}}{(2u+1)!}-\frac{1}{t}\sum_{u=0}^{t-1}\frac{u\alpha^{2u}}{(2u+1)!},
	\end{equation*}
	and consequently, 
	\begin{equation}\label{s21rewr}
	\begin{split}
	&\sum_{u=0}^{\infty}\frac{\alpha^{2u}}{(2u+1)!}-\sum_{u=t}^{\infty}\frac{\alpha^{2u}}{(2u+1)!}-\frac{1}{t}\sum_{u=0}^{\infty}\frac{u^2+u+\frac{1}{2}}{(2u+1)!}\alpha^{2u} \leq s_{2,1}(t)\leq \\
	& \hspace{5 cm}  \sum_{u=0}^{\infty}\frac{\alpha^{2u}}{(2u+1)!}-\frac{1}{t}\Biggl(\sum_{u=0}^{\infty} \frac{u\alpha^{2u}}{(2u+1)!}-\sum_{u=t}^{\infty}\frac{u\alpha^{2u}}{(2u+1)!}\Biggr).
	\end{split}
	\end{equation}
	By Lemma \ref{helperC}, 
	\begin{equation}\label{s21h}
	\sum_{u=t}^{\infty}\frac{\alpha^{2u}}{(2u+1)!}=\frac{1}{\alpha^2}\sum_{u=t+1}^{\infty}\frac{\alpha^{2u}}{(2u-1)!}=\frac{2}{\alpha^2}\sum_{u=t+1}^{\infty}\frac{u\alpha^{2u}}{(2u)!}\leq \frac{2C_1}{t^2}=\dfrac{2\alpha^2}{9 t^2},
	\end{equation}
	and
	\begin{equation}\label{s21h2}
	\sum_{u=t}^{\infty}\frac{u\alpha^{2u}}{(2u+1)!}=\frac{2}{\alpha^2}\sum_{u=t+1}^{\infty}\frac{u(u-1)\alpha^{2u}}{(2u)!}\leq\frac{2}{\alpha^2}\sum_{u=t+1}^{\infty}\frac{u^2\alpha^{2u}}{(2u)!}\leq \frac{2C_2}{\alpha^2 t^2}=\frac{4\alpha^2}{9t^2}.
	\end{equation}
	Plugging \eqref{s21h} and \eqref{s21h2} into \eqref{s21rewr} gives
	\begin{equation}\label{S2lemprfeqn5}
	\sum_{u=0}^{\infty}\frac{\alpha^{2u}}{(2u+1)!}-\frac{2\alpha^2}{9 t^2}-\frac{1}{t}\sum_{u=0}^{\infty}\frac{u^2+u+\frac{1}{2}}{(2u+1)!}\alpha^{2u} \leq s_{2,1}(t) \leq \sum_{u=0}^{\infty}\frac{\alpha^{2u}}{(2u+1)!}-\frac{1}{t}\sum_{u=0}^{\infty}\frac{u\alpha^{2u}}{(2u+1)!}+\frac{4\alpha^2}{9 t^3}.
	\end{equation}
	Using Lemma \ref{helperC2}, \eqref{S2lemprfeqn5} further reduces to 
	\begin{equation}\label{S2lemprfeqn6}
	\begin{split}
	&\frac{\sinh(\alpha)}{\alpha}-\frac{1}{t}\Biggl(\frac{\cosh(\alpha)}{4}+\frac{\sinh(\alpha)}{4\alpha}+\frac{\alpha\sinh(\alpha)}{4}+\frac{2\alpha^2}{9}\Biggr)\leq s_{2,1}(t)\leq\\
	& \hspace{6 cm} \frac{\sinh(\alpha)}{\alpha}-\frac{1}{t}\Biggl(\frac{\cosh(\alpha)}{2}-\frac{\sinh(\alpha)}{2\alpha}-\frac{4\alpha^2}{9}\Biggr).
	\end{split}
	\end{equation}
	A numerical check shows that 
	$$\frac{\cosh(\alpha)}{4}+\frac{\sinh(\alpha)}{4\alpha}+\frac{\alpha\sinh(\alpha)}{4}+\frac{2\alpha^2}{9}<\frac{7}{10}\ \ \text{and}\ \ \frac{\cosh(\alpha)}{2}-\frac{\sinh(\alpha)}{2\alpha}-\frac{4\alpha^2}{9}>-\frac{3}{40}.$$
	This, along with \eqref{S2lemprfeqn6}, gives
	\begin{equation}\label{finals21}
	\frac{\sinh(\alpha)}{\alpha}-\frac{7}{10t}  < s_{2,1}(t)<\frac{\sinh(\alpha)}{\alpha}+\frac{3}{40t}.
	\end{equation}
Next we employ Lemma \ref{lem3} and get
	\begin{equation}\label{a22final}
	\frac{2u+1}{2t}-\frac{4u^3+6u^2+8u+3}{12t^2}+\frac{(-1)^{t+1}}{\binom{-\frac{3}{2}}{t}} \leq A_{2,2}(t,u) \leq \frac{2u+1}{2t}+\frac{(-1)^{t+1}}{\binom{-\frac{3}{2}}{t}}.
	\end{equation}
	Plugging \eqref{a22final} into \eqref{S2lemprfeqn4}, we obtain
	\begin{equation*}
	\sum_{u=0}^{t-1}\frac{\alpha^{2u}}{(2u)!}\Biggl(\frac{2u+1}{2t}+\frac{(-1)^{t+1}}{\binom{-\frac{3}{2}}{t}}-\frac{4u^3+6u^2+8u+3}{12t^2}\Biggr) \leq s_{2,2}(t) \leq \sum_{u=0}^{t-1}\frac{\alpha^{2u}}{(2u)!}\Biggl(\frac{2u+1}{2t}+\frac{(-1)^{t+1}}{\binom{-\frac{3}{2}}{t}}\Biggr),
	\end{equation*}
	which, using $p_3(u):=4u^3+6u^2+8u+3$, can be rewritten as
	\begin{equation}\label{s22interm}
	\begin{split}
	& \frac{1}{2t}\sum_{u=0}^{\infty}\frac{(2u+1)\alpha^{2u}}{(2u)!}-\frac{1}{2t}\sum_{u=t}^{\infty}\frac{(2u+1)\alpha^{2u}}{(2u)!}+\frac{(-1)^{t+1}}{\binom{-\frac{3}{2}}{t}}\sum_{u=0}^{\infty}\frac{\alpha^{2u}}{(2u)!}-\frac{1}{12t^2}\sum_{u=0}^{\infty}\frac{p_3(u)\alpha^{2u}}{(2u)!}\\
	& \hspace{4 cm} \leq s_{2,2}(t)\leq \frac{1}{2t}\sum_{u=0}^{\infty}\frac{(2u+1)\alpha^{2u}}{(2u)!}+\frac{(-1)^{t+1}}{\binom{-\frac{3}{2}}{t}}\sum_{u=0}^{\infty}\frac{\alpha^{2u}}{(2u)!}-\frac{(-1)^{t+1}}{\binom{-\frac{3}{2}}{t}}\sum_{u=t}^{\infty}\frac{\alpha^{2u}}{(2u)!}.
	\end{split}
	\end{equation}
	By Lemma \ref{helperC} we obtain
	\begin{equation}\label{s22trun1}
	\sum_{u=t}^{\infty}\frac{\alpha^{2u}}{(2u)!}=\frac{1}{\alpha^2}\sum_{u=t+1}^{\infty}\frac{(2u-1)2u\alpha^{2u}}{(2u)!}\leq \frac{4}{\alpha^2}\sum_{u=t+1}^{\infty}\frac{u^2\alpha^{2u}}{(2u)!}\leq \frac{4C_2}{\alpha^2 t^2}=\frac{8\alpha^2}{9 t^2}
	\end{equation}
	and
	\begin{equation}\label{s22trun2}
	\sum_{u=t}^{\infty}\frac{(2u+1)\alpha^{2u}}{(2u)!}=\frac{1}{\alpha^2}\sum_{u=t+1}^{\infty}
	\frac{2u(2u-1)^2\alpha^{2u}}{(2u)!}\leq \frac{8}{\alpha^2}\sum_{u=t+1}^{\infty}\frac{u^3\alpha^{2u}}{(2u)!}\leq \frac{8C_3}{\alpha^2 t^2}=\frac{32\alpha^2}{9 t^2}.
	\end{equation}
	Combining \eqref{s22trun1} and \eqref{s22trun2} with \eqref{s22interm} gives
	\begin{equation}\label{S2lemprfeqn7}
	\begin{split}
	&\frac{1}{2t}\sum_{u=0}^{\infty}\frac{(2u+1)\alpha^{2u}}{(2u)!}-\frac{1}{2t}\frac{32\alpha^2}{9 t^2}+\frac{(-1)^{t+1}}{\binom{-\frac{3}{2}}{t}}\sum_{u=0}^{\infty}\frac{\alpha^{2u}}{(2u)!}-\frac{1}{12t^2}\sum_{u=0}^{\infty}\frac{p_3(u)\alpha^{2u}}{(2u)!}\\
	& \hspace{3.4 cm} \leq s_{2,2}(t)\leq \frac{1}{2t}\sum_{u=0}^{\infty}\frac{(2u+1)\alpha^{2u}}{(2u)!}+\frac{(-1)^{t+1}}{\binom{-\frac{3}{2}}{t}}\sum_{u=0}^{\infty}\frac{\alpha^{2u}}{(2u)!}+\frac{(-1)^{t}}{\binom{-\frac{3}{2}}{t}}\frac{8\alpha^2}{9t^2}.
	\end{split}
	\end{equation}
	Furthermore, for all $t\geq 1$ we have $\binom{2t}{t}\geq \frac{4^t}{2\sqrt{t}}$ which implies
	\begin{equation}\label{S2lemprfeqn8}
	\frac{(-1)^t}{\binom{-\frac{3}{2}}{t}}=\frac{2^{2t+1}}{t+1}\frac{1}{\binom{2t+2}{t+1}}<1,\ \ \text{$t\geq 1$.}
	\end{equation}
	Applying \eqref{S2lemprfeqn8} and Lemma \ref{helperC} to \eqref{S2lemprfeqn7}, we obtain
	\begin{equation}\label{S2lemprfeqn9}
	\begin{split} 
	&\frac{(-1)^{t+1}}{\binom{-\frac{3}{2}}{t}}\cosh(\alpha)+\frac{1}{2t}\Bigl(\underbrace{\cosh(\alpha)+\alpha\sinh(\alpha)}_{=:\mathrm{csh}(\alpha)}\Bigr)-\frac{C_{2,2}(\alpha)}{t^2}\leq s_{2,2}(t)\leq\\
	& \hspace{4 cm} \frac{(-1)^{t+1}}{\binom{-\frac{3}{2}}{t}}\cosh(\alpha)+\frac{1}{2t}\mathrm{csh}(\alpha)+\frac{8\alpha^2}{9},
	\end{split}
	\end{equation}
	where 
	$$C_{2,2}(\alpha)=\frac{16\alpha^2}{9}+\frac{\alpha^2\cosh(\alpha)}{4}+\frac{\alpha^{3}\sinh(\alpha)}{24}+\frac{\cosh(\alpha)}{4}+\frac{\alpha\sinh(\alpha)}{2}<1\ \ \text{and}\ \ \frac{8\alpha^2}{9}<\frac{1}{4}.$$
	Therefore
	\begin{equation}\label{s22final}
	\begin{split}
	& \frac{(-1)^{t+1}}{\binom{-\frac{3}{2}}{t}}\cosh(\alpha)+\frac{1}{2t}\mathrm{csh}(\alpha)-\frac{1}{t^2} \leq s_{2,2}(t)\leq \\
	& \hspace{3 cm}  \frac{(-1)^{t+1}}{\binom{-\frac{3}{2}}{t}}\cosh(\alpha)+\frac{1}{2t}\mathrm{csh}(\alpha)+\frac{1}{4t^2}.
	\end{split}
	\end{equation}
	Applying \eqref{finals21} and \eqref{s22final} to \eqref{S2lemprfeqn3} we obtain
	\begin{equation*}
	\begin{split}
	&\frac{\sinh(\alpha)}{\alpha}-\frac{7}{10t}+\frac{(-1)^{t+1}}{\binom{-\frac{3}{2}}{t}}\cosh(\alpha)+\frac{1}{2t}\mathrm{csh}(\alpha)-\frac{1}{t^2} \leq -\frac{S_2(t)}{\binom{\frac{-3}{2}}{t}}\leq\\
	& \hspace{2 cm} \frac{\sinh(\alpha)}{\alpha}+\frac{3}{40t}+\frac{(-1)^{t+1}}{\binom{-\frac{3}{2}}{t}}\cosh(\alpha)+\frac{1}{2t}\mathrm{csh}(\alpha)+\frac{1}{4t^2},
	\end{split}
	\end{equation*}
	which implies that for $t \geq 1$,
	\begin{equation}\label{S2lemprfeqn10}
	\begin{split}
	&\frac{\sinh(\alpha)}{\alpha}+\frac{(-1)^{t+1}}{\binom{-\frac{3}{2}}{t}}\cosh(\alpha)+\frac{1}{t}\Biggl(-\frac{7}{10}+\frac{\mathrm{csh}(\alpha)}{2}-1\Biggr)\leq -\frac{S_2(t)}{\binom{\frac{-3}{2}}{t}}\leq\\
	& \frac{\sinh(\alpha)}{\alpha}+\frac{(-1)^{t+1}}{\binom{-\frac{3}{2}}{t}}\cosh(\alpha)+\frac{1}{t}\Biggl(\frac{3}{40}+\frac{\mathrm{csh}(\alpha)}{2}+\frac{1}{4}\Biggr).
	\end{split}
	\end{equation}
	Since
	$$-\frac{7}{10}+\frac{\mathrm{csh}(\alpha)}{2}-1>-1 \ \ \text{and}\ \ \frac{3}{40}+\frac{\mathrm{csh}(\alpha)}{2}+\frac{1}{4}<\frac{11}{10},$$
	from \eqref{S2lemprfeqn10}, it follows that for all $t\geq 1$,
	\begin{equation}\label{sum2final}
	-\frac{1}{t}< -\frac{\sinh(\alpha)}{\alpha}-\frac{(-1)^{t+1}}{\binom{-\frac{3}{2}}{t}}\cosh(\alpha)-\frac{S_2(t)}{\binom{-\frac{3}{2}}{t}}< \frac{11}{10 t}.
	\end{equation}
	Multiplying by $-1$ on both sides of \eqref{sum2final}, we get \eqref{S2lemmaeqn}.
\qed

\emph{Proof of Lemma \ref{S3lemma}:}
Rewrite $S_3(t)$ as follows:
\begin{eqnarray}\label{S3lemprfeqn1}
S_3(t)&=& \sum_{u=1}^t\frac{(-1)^u\alpha^{2u}}{(2u-1)!}\sum_{s=u}^t\frac{1}{s}\Bigl(\frac{1}{2}-s\Bigr)_{s+1}\binom{-\frac{3}{2}}{t-s}\frac{(-s)_u}{(s+u)!}\nonumber\\
&=& \sum_{u=1}^t\frac{(-1)^u\alpha^{2u}}{(2u-1)!}\underbrace{\sum_{s=0}^{t-u}\frac{1}{s+u}\Bigl(\frac{1}{2}-s-u\Bigr)_{s+u+1}\binom{-\frac{3}{2}}{t-s-u}\frac{(-s-u)_u}{(s+2u)!}}_{=:S_3(t,u)}\nonumber\\
\end{eqnarray}
Using the summation package \texttt{Sigma} (and its mechanization by \texttt{EvaluateMultiSums}), the sum $S_3(t,u)$ can be rewritten\footnote[4]{We refer again to Appendix \ref{reductionbySigma} and Remark \ref{remarkappendix} to see the underlying machinery in action.} as an \textit{indefinite} sum
\begin{equation}\label{S3lemprfeqn2}
S_3(t,u)=\binom{-\frac{3}{2}}{t}(-1)^u \Bigl(A_{3,1}(t,u)+A_{3,2}(t,u)\Bigr),
\end{equation}
where 
\begin{equation*}
A_{3,1}(t,u)=\frac{t(1+2t-2u)(-t)_u(-1)^u}{2(1+2t)u(t+u)(t)_u}
\end{equation*}
and 
\begin{equation*}
\hspace{2.5 cm} A_{3,2}(t,u)=\frac{(-1)^{t+1}}{\binom{-\frac{3}{2}}{t}}+\frac{1}{1+2t}+\frac{2t}{1+2t}\sum_{i=1}^u\frac{(-t)_i(-1)^i}{(t+i)(t)_i}.
\end{equation*}
From \eqref{S3lemprfeqn1} and \eqref{S3lemprfeqn2}, it follows that
\begin{equation}\label{S3lemprfeqn3}
S_3(t)=\binom{-\frac{3}{2}}{t}\Bigl(s_{3,1}(t)+s_{3,2}(t)\Bigr),
\end{equation}
where
\begin{equation}\label{S3lemprfeqn4}
s_{3,1}(t)=\sum_{u=1}^t\frac{\alpha^{2u}}{(2u-1)!}A_{3,1}(t,u)\ \ \text{and}\ \ s_{3,2}(t)=\sum_{u=1}^t\frac{\alpha^{2u}}{(2u-1)!}A_{3,2}(t,u).
\end{equation}
By Lemma \ref{lem2}, we have
\begin{equation}\label{S3lemprfeqn5}
-\frac{1+2t-2u}{2u}\frac{1}{2t}\frac{u^2+\frac{1}{2}}{t}\leq A_{3,1}(t,u)-\frac{1+2t-2u}{2u}\frac{1}{2t}=A_{3,1}(t,u)-\frac{1}{2u}+\frac{2u-1}{4ut} \leq 0.
\end{equation}
Equation \eqref{S3lemprfeqn5} implies that
\begin{equation}\label{a31final}
-\frac{3u^2+2u+\frac{1}{2}}{4ut}=-\frac{u^2+\frac{1}{2}}{2ut}-\frac{\frac{u^2}{2}+\frac{1}{4}}{2ut}-\frac{2u-1}{4ut}\leq A_{3,1}(t,u)-\frac{1}{2u} \leq -\frac{2u-1}{4ut} \leq 0.
\end{equation}
Plugging \eqref{a31final} into \eqref{S3lemprfeqn4}, we obtain
\begin{equation*}
-\frac{1}{2t}\sum_{u=1}^{\infty}\frac{(3u^2+2u+\frac{1}{2})\alpha^{2u}}{(2u)!}\leq -\frac{1}{2t}\sum_{u=1}^t\frac{(3u^2+2u+\frac{1}{2})\alpha^{2u}}{(2u)!}\leq s_{3,1}(t)-\sum_{u=1}^t\frac{\alpha^{2u}}{(2u)!} \leq 0,
\end{equation*}
and consequently,
\begin{equation}\label{S3lemprfeqn6}
-\frac{1}{2t}\sum_{u=1}^{\infty}\frac{(3u^2+2u+\frac{1}{2})\alpha^{2u}}{(2u)!}-\sum_{u=t+1}^{\infty}\frac{\alpha^{2u}}{(2u)!}\leq s_{3,1}(t)-\sum_{u=1}^{\infty}\frac{\alpha^{2u}}{(2u)!}\leq -\sum_{u=t+1}^{\infty}\frac{\alpha^{2u}}{(2u)!} \leq 0.
\end{equation}
Applying Lemmas \ref{helperC} and \ref{helperC2} to \eqref{S3lemprfeqn6} gives
\begin{equation}\label{s31final}
-\frac{1}{2t}<-\frac{1}{t}\Biggl(\frac{3\alpha^2\cosh(\alpha)+7\alpha\sinh(\alpha)+2\cosh(\alpha)-2}{8}+\frac{\alpha^4}{9}\Biggr)\leq s_{3,1}(t)+1-\cosh(\alpha) \leq 0.
\end{equation}
Next, by Lemma \ref{lem3}, we obtain
\begin{equation}\label{a32final}
-\frac{4u^3+6u^2+8u+3}{12t^2}\leq A_{3,2}(t,u)+\frac{(-1)^{t}}{\binom{-\frac{3}{2}}{t}}-\frac{2u+1}{2t} \leq 0.
\end{equation}
Applying \eqref{a32final} to \eqref{S3lemprfeqn4}, it follows that
\begin{equation}\label{S3lemprfeqn7}
 s_{3,2}(t)+\frac{(-1)^{t}}{\binom{-\frac{3}{2}}{t}}\sum_{u=1}^t\frac{\alpha^{2u}}{(2u-1)!}-\frac{1}{2t}\sum_{u=1}^t\frac{(2u+1)\alpha^{2u}}{(2u-1)!}\leq 0
\end{equation}
and
\begin{equation}\label{S3lemprfeqn8}
\begin{split}
s_{3,2}(t)+\frac{(-1)^{t}}{\binom{-\frac{3}{2}}{t}}\sum_{u=1}^t\frac{\alpha^{2u}}{(2u-1)!}-\frac{1}{2t}\sum_{u=1}^t\frac{(2u+1)\alpha^{2u}}{(2u-1)!}&\geq -\frac{1}{12t^2}\sum_{u=1}^t\frac{p_3(u)\alpha^{2u}}{(2u-1)!}\\
&\geq -\frac{1}{12t^2}\sum_{u=1}^{\infty}\frac{(p_3(u)\alpha^{2u}}{(2u-1)!},
\end{split}
\end{equation}
where $p_3(u)=4u^3+6u^2+8u+3$ is as in \eqref{s22interm}.
Equations \eqref{S3lemprfeqn7} and \eqref{S3lemprfeqn8} imply that
\begin{equation}\label{s32interm1}
s_{3,2}(t)+\frac{(-1)^{t}}{\binom{-\frac{3}{2}}{t}}\sum_{u=1}^{\infty}\frac{\alpha^{2u}}{(2u-1)!}-\frac{1}{2t}\sum_{u=1}^{\infty}\frac{(2u+1)\alpha^{2u}}{(2u-1)!} \leq \frac{(-1)^{t}}{\binom{-\frac{3}{2}}{t}}\sum_{u=t+1}^{\infty}\frac{\alpha^{2u}}{(2u-1)!},
\end{equation}
and
\begin{equation}\label{s32interm2}
\begin{split}
s_{3,2}(t)+\frac{(-1)^{t}}{\binom{-\frac{3}{2}}{t}}\sum_{u=1}^{\infty}\frac{\alpha^{2u}}{(2u-1)!}&-\frac{1}{2t}\sum_{u=1}^{\infty}\frac{(2u+1)\alpha^{2u}}{(2u-1)!} \geq \\
& -\frac{1}{12t^2}\sum_{u=1}^{\infty}\frac{p_3(u)\alpha^{2u}}{(2u-1)!}-\frac{1}{2t}\sum_{u=t+1}^{\infty}\frac{(2u+1)\alpha^{2u}}{(2u-1)!}.
\end{split}
\end{equation}
By Lemma \ref{helperC} we obtain
\begin{equation}\label{S3lemprfeqn9}
\sum_{u=t+1}^{\infty}\frac{\alpha^{2u}}{(2u-1)!}=2\sum_{u=t+1}^{\infty}\frac{u\alpha^{2u}}{(2u)!}\leq \frac{4\alpha^4}{3\cdot 3! t^2}=\frac{2\alpha^4}{9 t^2}
\end{equation}
and
\begin{equation}\label{S3lemprfeqn10}
\sum_{u=t+1}^{\infty}\frac{(2u+1)\alpha^{2u}}{(2u-1)!}=2u\sum_{u=t+1}^{\infty}\frac{(2u+1)\alpha^{2u}}{(2u)!}\leq \frac{20\alpha^4}{3\cdot 3!t^2}=\frac{10\alpha^4}{9 t^2}.
\end{equation}
Substituting \eqref{S3lemprfeqn9}-\eqref{S3lemprfeqn10} into \eqref{s32interm1} and \eqref{s32interm2}, it follows that
\begin{equation}\label{S3lemprfeqn11}
s_{3,2}(t)+\frac{(-1)^{t}}{\binom{-\frac{3}{2}}{t}}\sum_{u=1}^{\infty}\frac{\alpha^{2u}}{(2u-1)!}-\frac{1}{2t}\sum_{u=1}^{\infty}\frac{(2u+1)\alpha^{2u}}{(2u-1)!} \leq \frac{3}{2}\cdot\frac{2\alpha^4}{9 t^2}=\frac{\alpha^4}{3 t^2}
\end{equation}
and
\begin{equation}\label{S3lemprfeqn12}
\begin{split}
-\frac{1}{12t^2}\sum_{u=1}^{\infty}\frac{p_3(u)\alpha^{2u}}{(2u-1)!}-\frac{\alpha^4}{3t^2}& \leq -\frac{1}{12t^2}\sum_{u=1}^{\infty}\frac{p_3(u)\alpha^{2u}}{(2u-1)!}-\frac{1}{2t}\frac{10\alpha^4}{9 t^2}\leq\\
& s_{3,2}(t)+\frac{(-1)^{t}}{\binom{-\frac{3}{2}}{t}}\sum_{u=1}^{\infty}\frac{\alpha^{2u}}{(2u-1)!}-\frac{1}{2t}\sum_{u=1}^{\infty}\frac{(2u+1)\alpha^{2u}}{(2u-1)!}.
\end{split}
\end{equation} 
Using Lemma \ref{helperC2} into \eqref{S3lemprfeqn11} and \eqref{S3lemprfeqn12}, we obtain
\begin{equation}\label{s32final}
\begin{split}
-\frac{61}{100t^2}&< -\frac{1}{t^2}\Biggl(\frac{3\alpha^{3}\sinh(\alpha)}{8}+\frac{(\alpha^4+24\alpha^2)\cosh(\alpha)}{24}+\frac{3\alpha\sinh(\alpha)}{4}+\frac{5\alpha^4}{9}\Biggr)\leq \\
&s_{3,2}(t)+\frac{(-1)^{t}}{\binom{-\frac{3}{2}}{t}}\alpha\sinh(\alpha)-\frac{1}{2t}\mathrm{sch}(\alpha) \leq \frac{\alpha^4}{3 t^2}< \frac{3}{100 t^2},
\end{split}
\end{equation}
where $\mathrm{sch}(\alpha):=\alpha^2 \cosh(\alpha)+2\alpha\sinh(\alpha)$.
Combining \eqref{s31final} and \eqref{s32final}, and then plugging into \eqref{S3lemprfeqn3} it follows that
\begin{equation*}
-\frac{1}{2t}-\frac{61}{100t^2}<\frac{S_3(t)}{\binom{-\frac{3}{2}}{t}}+\frac{(-1)^{t}}{\binom{-\frac{3}{2}}{t}}\alpha\sinh(\alpha)-\frac{1}{2t}\mathrm{sch}(\alpha)+1-\cosh(\alpha)<\frac{3}{100 t^2}.
\end{equation*}
Since for $t\geq 2$,
\begin{equation*}
-\frac{1}{2t}-\frac{61}{100t^2}+\frac{1}{2t}\mathrm{sch}(\alpha)>-\frac{71}{100t},
\end{equation*}
and
\begin{equation*}
\frac{3}{100 t^2}+\frac{1}{2t}\mathrm{sch}(\alpha)<\frac{12}{25t},
\end{equation*}
we finally get
\begin{equation}\label{sum3final}
\frac{12}{25t}> \frac{S_3(t)}{\binom{-\frac{3}{2}}{t}}+\frac{(-1)^{t}}{\binom{-\frac{3}{2}}{t}}\alpha\sinh(\alpha)+1-\cosh(\alpha)> -\frac{71}{100t}.
\end{equation}
\qed

\emph{Proof of Lemma \ref{S4lemma}:}
Rewrite $S_4(t)$ as follows:
\begin{eqnarray}\label{S4lemprfeqn1}
S_4(t)&=&\sum_{u=0}^t\frac{(-1)^u\alpha^{2u}}{(2u)!}\sum_{s=u}^t(-1)^s \Bigl(\frac{1}{2} - s\Bigr)_{s + 1} \frac{(-s)_u}{(s + u + 1)!}\nonumber\\
&=& \sum_{u=0}^t\frac{(-1)^u\alpha^{2u}}{(2u)!}\underbrace{\sum_{s=0}^{t-u}(-1)^{s+u} \Bigl(\frac{1}{2} - s-u\Bigr)_{s+u + 1} \frac{(-s-u)_u}{(s + 2u + 1)!}}_{=:S_4(t,u)}\nonumber\\
\end{eqnarray}
Using again the summation package \texttt{Sigma} (and its mechanization by \texttt{EvaluateMultiSums})\footnote[5]{We refer again to Appendix \ref{reductionbySigma} and Remark \ref{remarkappendix} to see the underlying machinery in action.}, we rewrite $S_4(t,u)$ as an indefinite sum
\begin{equation}\label{S4lemprfeqn2}
S_4(t,u)=\binom{-\frac{3}{2}}{t}(-1)^{u+t}\Bigl(A_{4,1}(t,u)+A_{4,2}(t,u)\Bigr),
\end{equation}
where
\begin{equation*}
A_{4,1}(t,u)=\frac{t(-t)_u(-1)^u}{2(1+2t)(t+u)(t+u+1)(t)_u}
\end{equation*}
and
\begin{equation*}
\hspace{3 cm} A_{4,2}(t,u)=\frac{1}{1+2u}\Biggl(\frac{(-1)^{t}}{\binom{-\frac{3}{2}}{t}}-\frac{1}{1+2t}-\frac{2t}{1+2t}\sum_{i=1}^u\frac{(-1)^i(-t)_i}{(t+i)(t)_i}\Biggr).
\end{equation*}
From \eqref{S4lemprfeqn1} and \eqref{S4lemprfeqn2} it follows that
\begin{equation}\label{S4lemprfeqn3}
S_4(t)=(-1)^t\binom{-\frac{3}{2}}{t}\Bigl(s_{4,1}(t)+s_{4,2}(t)\Bigr),
\end{equation}
where
\begin{equation}\label{S4lemprfeqn4}
s_{4,1}(t)=\sum_{u=0}^t\frac{\alpha^{2u}}{(2u)!}A_{4,1}(t,u)\ \ \text{and}\ \ s_{4,2}(t):=\sum_{u=0}^t\frac{\alpha^{2u}}{(2u)!}A_{4,2}(t).
\end{equation}
From Lemmas \ref{lem1}-\ref{lem2} we have
\begin{equation}\label{a41eq}
\frac{1}{4t^2}\Biggl(1-\frac{u^2+u+\frac{3}{2}}{t}\Biggr)\leq\frac{1}{2(t+u+1)}\frac{1}{2t}\Biggl(1-\frac{u^2+\frac{1}{2}}{t}\Biggr)\leq A_{4,1}(t,u) \leq \frac{1}{2(t+u+1)}\frac{1}{2t} \leq \frac{1}{4t^2}.
\end{equation}
Combining \eqref{a41eq} with \eqref{S4lemprfeqn4}, we obtain
\begin{equation*}
\frac{1}{4t^2}\sum_{u=0}^t\frac{\alpha^{2u}}{(2u)!}-\frac{1}{4t^3}\sum_{u=0}^t\frac{(u^2+u+\frac{3}{2})\alpha^{2u}}{(2u)!}\leq s_{4,1}(t) \leq \frac{1}{4t^2}\sum_{u=0}^t\frac{\alpha^{2u}}{(2u)!},
\end{equation*}
and consequently, we get
\begin{equation}\label{S4lemprfeqn5}
\frac{1}{4t^2}\sum_{u=0}^{\infty}\frac{\alpha^{2u}}{(2u)!}-\frac{1}{4t^2}\sum_{u=t+1}^{\infty}\frac{\alpha^{2u}}{(2u)!}-\frac{1}{4t^3}\sum_{u=0}^{\infty}\frac{(u^2+u+\frac{3}{2})\alpha^{2u}}{(2u)!}\leq s_{4,1}(t) \leq \frac{1}{4t^2}\sum_{u=0}^{\infty}\frac{\alpha^{2u}}{(2u)!}.
\end{equation}
Equation \eqref{S4lemprfeqn5} together with Lemmas \ref{helperC}-\ref{helperC2} imply
\begin{equation}\label{s41final}
\begin{split}
\frac{1}{4t^2}\cosh(\alpha)-\frac{3}{5t^3}\leq \frac{1}{4t^2}\cosh(\alpha)-\frac{1}{t^3}\Biggl(\frac{\alpha^4}{72}+\frac{(\alpha^2+6)\cosh(\alpha)}{16}+\frac{3\alpha\sinh(\alpha)}{16}\Biggr)&\leq s_{4,1}(t) \leq\\ &\frac{1}{4t^2}\cosh(\alpha).
\end{split}
\end{equation}
Next, by Lemma \ref{lem3}, we obtain
\begin{equation}\label{a42ineq2}
0\leq A_{4,2}(t,u)-\frac{1}{1+2u}\Biggl(\frac{(-1)^t}{\binom{-\frac{3}{2}}{t}}-\frac{2u+1}{2t}\Biggr)\leq \frac{1}{1+2u}\frac{p_3(u)}{12t^2},
\end{equation}
where $p_3(u)$ is as in \eqref{s22interm}.
Plugging \eqref{a42ineq2} into \eqref{S4lemprfeqn4}, it follows that 
\begin{equation*}
\begin{split}
0 \leq s_{4,2}(t)-\sum_{u=0}^{\infty}\frac{\alpha^{2u}}{(2u+1)!}\Biggl(\frac{(-1)^t}{\binom{-\frac{3}{2}}{t}}-\frac{2u+1}{2t}\Biggr)+&\sum_{u=t+1}^{\infty}\frac{\alpha^{2u}}{(2u+1)!}\Biggl(\frac{(-1)^t}{\binom{-\frac{3}{2}}{t}}-\frac{2u+1}{2t}\Biggr)\\
&\frac{1}{12t^2}\sum_{u=0}^{\infty}\frac{p_3(u)\alpha^{2u}}{(2u+1)!},
\end{split}
\end{equation*}
which implies that
\begin{equation}\label{last42}
\begin{split}
-\frac{(-1)^t}{\binom{-\frac{3}{2}}{t}}\sum_{u=t+1}^{\infty}\frac{\alpha^{2u}}{(2u+1)!}\leq& s_{4,2}(t)-\sum_{u=0}^{\infty}\frac{\alpha^{2u}}{(2u+1)!}\Biggl(\frac{(-1)^t}{\binom{-\frac{3}{2}}{t}}-\frac{2u+1}{2t}\Biggr) \leq\\
& \frac{1}{12t^2}\sum_{u=0}^{\infty}\frac{p_3(u)\alpha^{2u}}{(2u+1)!}+\frac{1}{2t}\sum_{u=t+1}^{\infty}\frac{(2u+1)\alpha^{2u}}{(2u+1)!}.
\end{split}
\end{equation}
By Lemma \ref{helperC}, 
\begin{equation}\label{S4lemprfeqn6}
\sum_{u=t+1}^{\infty}\frac{\alpha^{2u}}{(2u+1)!}\leq \sum_{u=t+1}^{\infty}\frac{\alpha^{2u}}{(2u)!}\leq \frac{\alpha^4}{3\cdot 3!t^2}=\frac{\alpha^4}{18 t^2},
\end{equation}
and
\begin{equation}\label{S4lemprfeqn7}
\sum_{u=t+1}^{\infty}\frac{(2u+1)\alpha^{2u}}{(2u+1)!}\leq \sum_{u=t+1}^{\infty}\frac{(2u+1)\alpha^{2u}}{(2u)!}\leq \frac{C_0+2C_1}{t^2}=\frac{\alpha^4(1+4)}{3\cdot 3!t^2}=\frac{5\alpha^4}{18t^2}.
\end{equation}
Applying \eqref{S4lemprfeqn6} and \eqref{S4lemprfeqn7} to \eqref{last42} and using Lemma \ref{helperC2}, we finally obtain
\begin{equation}\label{s42finalineq}
\begin{split}
-\frac{3}{1000t^2}< -\frac{2}{3}\cdot\frac{\alpha^4}{18}\leq -\frac{(-1)^t}{\binom{-\frac{3}{2}}{t}}\frac{\alpha^4}{18t^2}&\leq s_{4,2}(t)-\frac{(-1)^t}{\binom{-\frac{3}{2}}{t}}\frac{\sinh(\alpha)}{\alpha}+\frac{1}{2t}\cosh(\alpha)\leq \\
& \frac{1}{t^2}\Biggl(\frac{(\alpha^2+6)\cosh(\alpha)}{24}+\frac{\alpha\sinh(\alpha)}{8}+\frac{5\alpha^4}{36}\Biggr)< \frac{7}{20t^2}.
\end{split}
\end{equation}
From \eqref{s41final}, \eqref{s42finalineq}, and \eqref{S4lemprfeqn3}, it follows that
\begin{equation*}
\begin{split}
\frac{1}{t^2}\Biggl(-\frac{3}{1000}+\frac{1}{4}\cosh(\alpha)-\frac{3}{5}\Biggr)&\leq -\frac{3}{1000t^2}-\frac{3}{5t^3}+\frac{1}{4t^2}\cosh(\alpha)\leq\\
&\frac{S_4(t)}{(-1)^t\binom{-\frac{3}{2}}{t}}-\frac{(-1)^t}{\binom{-\frac{3}{2}}{t}}\frac{\sinh(\alpha)}{\alpha}+\frac{1}{2t}\cosh(\alpha)<\frac{1}{t^2}\Bigl(\frac{7}{20}+\frac{1}{4}\cosh(\alpha)\Bigr).
\end{split}
\end{equation*}
This implies for $t \geq 1$,
\begin{equation}\label{sum4final}
\frac{13}{20t^2}> \frac{S_4(t)}{(-1)^t\binom{-\frac{3}{2}}{t}}-\frac{(-1)^t}{\binom{-\frac{3}{2}}{t}}\frac{\sinh(\alpha)}{\alpha}+\frac{1}{2t}\cosh(\alpha)> -\frac{1}{3t^2}.
\end{equation}
\qed

 \section{Error bounds}\label{sec7}
 \begin{lemma}\label{errorlem1}
 For all $n,k \in \mathbb{Z}_{\geq1}$,
 \begin{equation}\label{errorlem1eqn1}
 \dfrac{1}{(24n)^k} < \sum_{t=k}^{\infty}\dfrac{1}{(24n)^t}\leq \dfrac{24}{23} \dfrac{1}{(24n)^k}.
 \end{equation}
 \end{lemma}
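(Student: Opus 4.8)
The plan is to recognize the sum as a tail of a convergent geometric series and evaluate it in closed form. Since $n,k\geq 1$ we have $0<\frac{1}{24n}\leq \frac{1}{24}<1$, so the series $\sum_{t=k}^{\infty}(24n)^{-t}$ converges, and
\[
\sum_{t=k}^{\infty}\frac{1}{(24n)^t}=\frac{1}{(24n)^k}\cdot\frac{1}{1-\frac{1}{24n}}=\frac{1}{(24n)^k}\cdot\frac{24n}{24n-1}.
\]
This reduces the claimed two-sided bound to the purely elementary statement
\[
1<\frac{24n}{24n-1}\leq\frac{24}{23}\qquad\text{for all }n\geq 1.
\]

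For the left inequality, note $\frac{24n}{24n-1}=1+\frac{1}{24n-1}>1$ since $24n-1\geq 23>0$. For the right inequality, write $\frac{24n}{24n-1}=1+\frac{1}{24n-1}$ and observe that $24n-1$ is increasing in $n$, hence $\frac{1}{24n-1}$ is decreasing in $n$ and attains its maximum over $n\geq 1$ at $n=1$, where it equals $\frac{1}{23}$; thus $\frac{24n}{24n-1}\leq 1+\frac{1}{23}=\frac{24}{23}$. Multiplying the chain $1<\frac{24n}{24n-1}\leq\frac{24}{23}$ by the positive quantity $(24n)^{-k}$ yields \eqref{errorlem1eqn1}.

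There is essentially no obstacle here: the only content is the closed-form summation of the geometric series and the observation that $n\mapsto (24n-1)^{-1}$ is decreasing on $\mathbb{Z}_{\geq 1}$, so the worst case is $n=1$. If one prefers to avoid the closed form entirely, an alternative is to bound the tail directly: $\sum_{t=k}^{\infty}(24n)^{-t}>(24n)^{-k}$ is immediate from positivity of the remaining terms, and $\sum_{t=k}^{\infty}(24n)^{-t}=(24n)^{-k}\sum_{j=0}^{\infty}(24n)^{-j}\leq (24n)^{-k}\sum_{j=0}^{\infty}24^{-j}=(24n)^{-k}\cdot\frac{24}{23}$, using $24n\geq 24$ termwise; this gives the same result without explicitly naming the ratio.
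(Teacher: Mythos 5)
Your proof is correct and takes essentially the same route as the paper: both sum the geometric tail in closed form as $(24n)^{-k}\cdot\frac{24n}{24n-1}$ and then bound $1<\frac{24n}{24n-1}\leq\frac{24}{23}$ for $n\geq 1$. Your extra justification of the monotonicity and the alternative termwise bound are fine but add nothing beyond what the paper's one-line argument already contains.
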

\begin{proof}
  The statement follows from
  $$\sum_{t=k}^{\infty}\dfrac{1}{(24n)^t}=\dfrac{1}{(24n)^k}\dfrac{24n}{24n-1}\ \ \text{and}\ \ 1< \dfrac{24n}{24n-1} \leq  \dfrac{24}{23}\ \ \text{for all}\ n \geq 1.$$
\end{proof}

\begin{lemma}\label{errorlem2}
For all $n,k,s \in \mathbb{Z}_{\geq1}$,
\begin{equation}\label{errorlem2eqn1}
\dfrac{1}{(k+1)^{s-\frac{1}{2}}}\dfrac{1}{(24n)^k}<\sum_{t=k}^{\infty} \dfrac{(-1)^t \binom{-\frac{3}{2}}{t}}{t^s}\dfrac{1}{(24n)^t}<\dfrac{12}{5(k+1)^{s-\frac{1}{2}}}\dfrac{1}{(24n)^k}.
\end{equation}	
\end{lemma}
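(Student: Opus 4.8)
\emph{Proof proposal.} The plan is to exploit that every summand is positive. Writing out the Pochhammer product, $(-1)^t\binom{-\frac32}{t}=\frac{(2t+1)!}{4^t(t!)^2}=\frac{2t+1}{4^t}\binom{2t}{t}>0$ for all $t\ge 0$, so $\sum_{t\ge k}\frac{(-1)^t\binom{-3/2}{t}}{t^s}\frac{1}{(24n)^t}$ is a series of positive terms. This splits the claim into two one-sided estimates and already dictates the strategy: get the lower bound from the single leading term $t=k$, and the upper bound from a geometric majorant.

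For the lower bound I would keep only the term $t=k$ and discard the rest. Using the central binomial estimate $\binom{2t}{t}\ge\frac{4^t}{2\sqrt t}$ (the same one used in the proof of Lemma~\ref{S2lemma} at \eqref{S2lemprfeqn8}) gives $(-1)^k\binom{-\frac32}{k}\ge\frac{2k+1}{2\sqrt k}>\sqrt k$, hence
$$\sum_{t=k}^{\infty}\frac{(-1)^t\binom{-\frac32}{t}}{t^s}\frac{1}{(24n)^t}\;>\;\frac{(-1)^k\binom{-\frac32}{k}}{k^s}\frac{1}{(24n)^k}\;>\;\frac{1}{k^{s-\frac12}}\frac{1}{(24n)^k}\;\ge\;\frac{1}{(k+1)^{s-\frac12}}\frac{1}{(24n)^k},$$
where the last step is $k\le k+1$ together with $s-\tfrac12>0$. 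This half is routine.

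For the upper bound I would first factor out $(24n)^{-k}$ and, since $24n\ge 24$, replace $(24n)^{-(t-k)}$ by $24^{-(t-k)}$, reducing matters to bounding $\sum_{t\ge k}\frac{(-1)^t\binom{-3/2}{t}}{t^s}\,24^{-(t-k)}$. Next I would prove the matching upper estimate $(-1)^t\binom{-\frac32}{t}\le\frac32\sqrt t$ for all $t\ge1$, either from $\binom{2t}{t}\le\frac{4^t}{\sqrt{\pi t}}$ or, self-containedly, by checking that $r_t:=(-1)^t\binom{-3/2}{t}/\sqrt t$ satisfies $r_{t+1}/r_t=\frac{t+3/2}{t+1}\sqrt{\frac{t}{t+1}}<1$ with $r_1=\frac32$. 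Then $\frac{(-1)^t\binom{-3/2}{t}}{t^s}\le\frac32\,t^{-(s-1/2)}$; isolating the term $t=k$, using $t^{-(s-1/2)}\le(k+1)^{-(s-1/2)}$ for every $t\ge k+1$, and summing the geometric tail $\sum_{t\ge k+1}24^{-(t-k)}=\frac1{23}$, one is left with a bound of the shape $\frac32\bigl(k^{-(s-1/2)}+\frac1{23}(k+1)^{-(s-1/2)}\bigr)\frac1{(24n)^k}$, and the proof concludes by checking that this is $<\frac{12}{5(k+1)^{s-1/2}}\frac1{(24n)^k}$. (A cruder route replacing $t^{-s}$ by $k^{-s}$ and invoking the BPRZ tail estimate Lemma~\ref{BPRZ} with the parameter matching $-\tfrac32$ also gives convergence, but it loses a factor of order $k$ and is too weak here.)

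The main obstacle is exactly this final numerical comparison. The leading term of the majorant carries $k^{-(s-1/2)}$ rather than $(k+1)^{-(s-1/2)}$, and the ratio $(1+1/k)^{s-1/2}$ is worst for small $k$; so the delicate point is to keep the $t=k$ contribution controlled \emph{uniformly} in $s$, which forces one to use the exact value $(-1)^k\binom{-\frac32}{k}=\frac{2k+1}{4^k}\binom{2k}{k}$ for that term instead of the crude $\frac32\sqrt t$ bound, and possibly to dispatch the smallest values of $k$ by a direct check. Pinning down the constant $\tfrac{12}{5}$ is where the real care is needed; the remainder is bookkeeping with Lemma~\ref{errorlem1}, the central-binomial bounds, and the geometric series.
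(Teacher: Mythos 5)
Your lower bound is correct and is essentially the paper's own argument: both you and the paper keep a single term of the positive series and use $\binom{2k}{k}\geq 4^k/(2\sqrt{k})$ (the paper phrases this as the termwise bound $\sqrt{t+1}/t^s>(t+1)^{-(s-1/2)}$ in \eqref{errorlem2eqn4} before truncating to $t=k$). The genuine gap is in the upper bound, and it sits exactly where you said "the real care is needed": the final numerical comparison cannot be closed. Your majorant requires $\tfrac{3}{2}(1+1/k)^{s-1/2}+\tfrac{3}{46}<\tfrac{12}{5}$, which fails for $k=1$ as soon as $s\geq 2$; and switching to the exact value of the $t=k$ term, as you suggest, does not help, because for $(k,s)=(1,2)$ that single term equals $\tfrac{3}{2}\cdot\tfrac{1}{24n}$ while the asserted upper bound is $\tfrac{12}{5\cdot 2^{3/2}}\cdot\tfrac{1}{24n}\approx \tfrac{0.849}{24n}$. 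Hence the inequality \eqref{errorlem2eqn1} is actually \emph{false} for $(k,s)=(1,2)$ (and also for $(k,s)=(2,2)$, where the first term $\tfrac{15}{32}\cdot\tfrac{1}{(24n)^2}$ already exceeds $\tfrac{12}{5\cdot 3^{3/2}}\cdot\tfrac{1}{(24n)^2}$), for every $n$. A "direct check of small $k$" would therefore refute these cases rather than dispatch them; no bookkeeping can rescue the stated constant.

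You did not miss a trick that the paper found: the paper's proof breaks at the same spot. The inequality in \eqref{errorlem2eqn5} that replaces $\frac{\sqrt{t+1}}{2t^s}$ by $\frac{1}{(t+1)^{s-1/2}}$ is equivalent to $(1+1/t)^s<2$, which fails for $t=1,2$ when $s=2$ (for $s=1$ it holds for all $t\geq 2$, with equality at $t=1$, so both your argument and the paper's are sound in the case $s=1$). The correct conclusions from your analysis are either to restrict the lemma to $s=1$, or to weaken the upper bound (e.g.\ replace $(k+1)^{s-1/2}$ by $k^{s-1/2}$, or enlarge the constant for small $k$), and in either case to re-examine the downstream applications that invoke the lemma with $s=2$, namely Lemmas \ref{errorsum1} and \ref{errorsum4}.
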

\begin{proof}
Rewrite the infinite sum as
\begin{equation}\label{errorlem2eqn2}
\sum_{t=k}^{\infty} \dfrac{(-1)^t \binom{-\frac{3}{2}}{t}}{t^s}\dfrac{1}{(24n)^t} = \sum_{t=k}^{\infty} \dfrac{\binom{2t+2}{t+1}}{4^t}\dfrac{t+1}{2t^s}\dfrac{1}{(24n)^t}.
\end{equation}
For all $t \geq 1$, 
$$\dfrac{4^t}{2\sqrt{t}}\leq \binom{2t}{t} \leq \dfrac{4^t}{\sqrt{\pi t}}.$$
From \eqref{errorlem2eqn2} we get
\begin{equation}\label{errorlem2eqn3}
\sum_{t=k}^{\infty}\dfrac{\sqrt{t+1}}{t^s} \dfrac{1}{(24n)^t}\leq \sum_{t=k}^{\infty} \dfrac{(-1)^t \binom{-\frac{3}{2}}{t}}{t^s}\dfrac{1}{(24n)^t} \leq \dfrac{4}{\sqrt{\pi}}\sum_{t=k}^{\infty}\dfrac{\sqrt{t+1}}{2t^s} \dfrac{1}{(24n)^t}.
\end{equation}
For all $k \geq 1$,
\begin{equation}\label{errorlem2eqn4}
\sum_{t=k}^{\infty} \dfrac{(-1)^t \binom{-\frac{3}{2}}{t}}{t^s}\dfrac{1}{(24n)^t} \geq \sum_{t=k}^{\infty}\dfrac{\sqrt{t+1}}{t^s} \dfrac{1}{(24n)^t} > \sum_{t=k}^{\infty}\dfrac{1}{(t+1)^{s-\frac{1}{2}}} \dfrac{1}{(24n)^t}> \dfrac{1}{(k+1)^{s-\frac{1}{2}}} \dfrac{1}{(24n)^k}
\end{equation}
and
\begin{eqnarray}\label{errorlem2eqn5}
\sum_{t=k}^{\infty} \dfrac{(-1)^t \binom{-\frac{3}{2}}{t}}{t^s}\dfrac{1}{(24n)^t} \leq \dfrac{4}{\sqrt{\pi}}\sum_{t=k}^{\infty}\dfrac{\sqrt{t+1}}{2t^s} \dfrac{1}{(24n)^t} &<& \dfrac{4}{\sqrt{\pi}}\sum_{t=k}^{\infty}\dfrac{1}{(t+1)^{s-\frac{1}{2}}} \dfrac{1}{(24n)^t}\nonumber\\
 &\leq& \dfrac{4}{\sqrt{\pi}(k+1)^{s-\frac{1}{2}}}\sum_{t=k}^{\infty} \dfrac{1}{(24n)^t}\nonumber\\
 &<& \dfrac{4\cdot 24}{23\cdot\sqrt{\pi}}\dfrac{1}{(k+1)^{s-\frac{1}{2}}} \dfrac{1}{(24n)^k} \ (\text{by}\ \eqref{errorlem1eqn1}).\nonumber\\
 &<& \frac{12}{5}\dfrac{1}{(k+1)^{s-\frac{1}{2}}} \dfrac{1}{(24n)^k}.
\end{eqnarray}
Equations \eqref{errorlem2eqn4} and \eqref{errorlem2eqn5} imply \eqref{errorlem2eqn1}.
\end{proof}

\begin{lemma}\label{errorlem3}
	For $n \in \mathbb{Z}_{\geq 1}$and $k \in \mathbb{Z}_{\geq0}$,
	\begin{equation}\label{errorlem3eqn1}
	0<\sum_{t=k}^{\infty} \binom{-\frac{3}{2}}{t}\dfrac{(-1)^t}{(24n)^t}<4\sqrt{2}\dfrac{\sqrt{k+1}}{(24n)^k}.
	\end{equation}	
\end{lemma}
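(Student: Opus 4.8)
The plan is to obtain \eqref{errorlem3eqn1} as an essentially immediate consequence of Lemma \ref{BPRZ}, after first handling separately the case $k=0$, which that lemma does not cover. For $k=0$ the sum in \eqref{errorlem3eqn1} is $\sum_{t=0}^{\infty}\binom{-\frac{3}{2}}{t}\frac{(-1)^t}{(24n)^t}$, which by the binomial series equals $\bigl(1-\frac{1}{24n}\bigr)^{-3/2}$, the expansion being valid since $0<\frac{1}{24n}\le\frac{1}{24}<1$. This value is clearly $>1>0$, and it is at most $\bigl(1-\frac{1}{24}\bigr)^{-3/2}=(24/23)^{3/2}<2<4\sqrt{2}$, which is exactly the right-hand side of \eqref{errorlem3eqn1} when $k=0$ (then $\sqrt{k+1}=1$ and $(24n)^k=1$).

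For $k\ge 1$ I would invoke Lemma \ref{BPRZ} with parameter $s=2$ and with $n$ there replaced by $24n$; the hypothesis $n>2s$ becomes $24n>4$, which holds for all $n\ge 1$. Since $-\frac{2s-1}{2}=-\frac{3}{2}$ for $s=2$, Lemma \ref{BPRZ} gives
$$0<\sum_{t=k}^{\infty}\binom{-\frac{3}{2}}{t}\frac{(-1)^t}{(24n)^t}<b_{k,24n}(2),$$
and it only remains to simplify the right-hand side:
$$b_{k,24n}(2)=\frac{4\sqrt{2}}{\sqrt{2+k-1}}\binom{2+k-1}{2-1}\frac{1}{(24n)^k}=\frac{4\sqrt{2}}{\sqrt{k+1}}\binom{k+1}{1}\frac{1}{(24n)^k}=4\sqrt{2}\,\frac{\sqrt{k+1}}{(24n)^k},$$
which is precisely the upper bound claimed in \eqref{errorlem3eqn1}; positivity is already supplied by Lemma \ref{BPRZ}.

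I do not expect any genuine obstacle: once the reduction to Lemma \ref{BPRZ} (with $s=2$ and $n\mapsto 24n$) is set up, the whole content is the routine check that its hypotheses are met, the elementary simplification $\binom{k+1}{1}/\sqrt{k+1}=\sqrt{k+1}$, and the trivial numerical inequality $(24/23)^{3/2}<4\sqrt{2}$ in the base case. If one wished to avoid citing \cite{BPRZ}, an alternative would be to estimate the tail directly, using the central binomial bound $\binom{2t}{t}\le 4^t/\sqrt{\pi t}$ (as in the proof of Lemma \ref{errorlem2}) to write $(-1)^t\binom{-\frac{3}{2}}{t}=\tfrac{t+1}{2}\cdot 4^{-(t+1)}\binom{2t+2}{t+1}\le\tfrac{\sqrt{t+1}}{2\sqrt{\pi}}$ and then dominate the series by a convergent geometric-type sum in $\frac{1}{24n}$; but routing through the already-established Lemma \ref{BPRZ} is the most economical path.
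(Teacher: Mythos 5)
Your proof is correct and follows essentially the same route as the paper, which likewise obtains \eqref{errorlem3eqn1} by specializing Lemma \ref{BPRZ} with $(n,s)\mapsto(24n,2)$. Your separate treatment of $k=0$ (via the closed form $\bigl(1-\tfrac{1}{24n}\bigr)^{-3/2}$) is a welcome extra care, since Lemma \ref{BPRZ} is stated only for $k\geq 1$ while the present lemma claims $k\in\mathbb{Z}_{\geq 0}$ --- a point the paper's one-line proof glosses over.
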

\begin{proof}
Setting $(n,s) \mapsto (24n,2)$ in \eqref{BPRZeqn1}, it follows that for all $n \geq 1$,
\begin{equation*}
0< \sum_{t=k}^{\infty} \binom{-\frac{3}{2}}{t}\dfrac{(-1)^t}{(24n)^t} < 4\sqrt{2}\dfrac{\sqrt{k+1}}{(24n)^k}.
\end{equation*}
\end{proof}

\begin{definition}\label{newdef11}
For all $k \geq 1$ define
\begin{equation*}
\begin{split}
L_1(k)&:= \Biggl(\cosh(\alpha)-\dfrac{6\alpha\sinh(\alpha)}{5\sqrt{k+1}}-\dfrac{3}{10 (k+1)^{3/2}}\Biggr)\Bigl(\dfrac{1}{\sqrt{24}}\Bigr)^{2k}\\
\and  \hspace{2 cm}&\\
U_1(k)&:= \Biggl(\dfrac{24\cosh(\alpha)}{23}-\dfrac{\alpha\sinh(\alpha)}{2\sqrt{k+1}}+\dfrac{5}{4 (k+1)^{3/2}}\Biggr)\Bigl(\dfrac{1}{\sqrt{24}}\Bigr)^{2k}.
\end{split}
\end{equation*}
\end{definition}

\begin{lemma}\label{errorsum1}
Let $L_1(k)$ and $U_1(k)$ be as in Definition \ref{newdef11}. Let $g_{e,1}(t)$ be as in Definition \ref{newdef6}. Then for all $n,k \in \mathbb{Z}_{\geq 1}$, 
\begin{equation}\label{errorsum1eqn1}
L_1(k)\Bigl(\dfrac{1}{\sqrt{n}}\Bigr)^{2k}<\sum_{t=k}^{\infty}g_{e,1}(t) \Bigl(\dfrac{1}{\sqrt{n}}\Bigr)^{2t} <U_1(k)\Bigl(\dfrac{1}{\sqrt{n}}\Bigr)^{2k}.
\end{equation}
\end{lemma}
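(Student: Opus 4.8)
The plan is to feed the closed form for $S_1(t)$ supplied by Lemma \ref{S1lemma} into $g_{e,1}(t)=\frac{1}{24^t}\bigl(1+S_1(t)\bigr)$, split the tail $\sum_{t\ge k}g_{e,1}(t)(1/\sqrt n)^{2t}$ into three elementary pieces, and bound each piece using Lemmas \ref{errorlem1} and \ref{errorlem2}.

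First I would rewrite \eqref{S1lemmaeqn} in additive form. The quantity $(-1)^t\binom{-\frac{3}{2}}{t}$ is positive for every $t\ge 0$, so multiplying \eqref{S1lemmaeqn} through by it yields
\[
1+S_1(t)=\cosh(\alpha)-(-1)^t\binom{-\frac{3}{2}}{t}\frac{\alpha\sinh(\alpha)}{2t}+r_t,
\]
where $-\frac{1}{8}\,\frac{(-1)^t\binom{-\frac{3}{2}}{t}}{t^2}<r_t<\frac{13}{25}\,\frac{(-1)^t\binom{-\frac{3}{2}}{t}}{t^2}$. Dividing by $24^t$, multiplying by $(1/\sqrt n)^{2t}=n^{-t}$, and summing over $t\ge k\ge 1$ (all three resulting series converge absolutely since $24n>1$) produces
\[
\sum_{t=k}^{\infty}g_{e,1}(t)\Bigl(\frac{1}{\sqrt n}\Bigr)^{2t}=\cosh(\alpha)\sum_{t=k}^{\infty}\frac{1}{(24n)^t}-\frac{\alpha\sinh(\alpha)}{2}\sum_{t=k}^{\infty}\frac{(-1)^t\binom{-\frac{3}{2}}{t}}{t\,(24n)^t}+\sum_{t=k}^{\infty}\frac{r_t}{(24n)^t}.
\]
I would then bound the first sum by Lemma \ref{errorlem1}, the second by Lemma \ref{errorlem2} with $s=1$, and the third termwise: from the recorded bounds on $r_t$ together with Lemma \ref{errorlem2} for $s=2$ it lies strictly between $-\frac{1}{8}\cdot\frac{12}{5}\cdot\frac{1}{(k+1)^{3/2}(24n)^k}$ and $\frac{13}{25}\cdot\frac{12}{5}\cdot\frac{1}{(k+1)^{3/2}(24n)^k}$.

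To obtain the upper bound for the whole tail I use the larger estimate $\frac{24}{23}$ for the first sum, the \emph{smaller} estimate $\frac{1}{\sqrt{k+1}}$ for the second (its coefficient $-\frac{\alpha\sinh(\alpha)}{2}$ is negative), and $\frac{13}{25}\cdot\frac{12}{5}=\frac{156}{125}<\frac{5}{4}$ for the third; for the lower bound I use the estimate $1$ for the first sum, the larger estimate $\frac{12}{5\sqrt{k+1}}$ for the second, and $-\frac{1}{8}\cdot\frac{12}{5}=-\frac{3}{10}$ for the third. Since $(1/\sqrt{24})^{2k}=24^{-k}$, collecting terms reproduces exactly the constants $U_1(k)$ and $L_1(k)$ of Definition \ref{newdef11}, which is \eqref{errorsum1eqn1}. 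The computation is routine once Lemma \ref{S1lemma} is in hand; the only points needing care are the sign bookkeeping — for each of the two target inequalities and each of the three sums one must pick the one-sided estimate compatible with that sum's sign — and the small rational check $\frac{156}{125}<\frac{5}{4}$, which is precisely what lets the remainder fit inside the constant $\frac{5}{4}$ appearing in $U_1(k)$.
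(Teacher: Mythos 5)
Your proposal is correct and follows essentially the same route as the paper: multiply Lemma \ref{S1lemma} by the positive quantity $(-1)^t\binom{-3/2}{t}$ to get a two-sided bound on $24^t g_{e,1}(t)=1+S_1(t)$, split the tail into the three series, and apply Lemma \ref{errorlem1} to the $\cosh(\alpha)$ piece and Lemma \ref{errorlem2} with $s=1$ and $s=2$ to the other two, with exactly the sign choices and the check $\tfrac{13}{25}\cdot\tfrac{12}{5}<\tfrac{5}{4}$ that the paper uses. No gaps.
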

\begin{proof}
From \eqref{eqn28} and \eqref{S1lemmaeqn}, it follows that for $t \geq 1$,
\begin{equation}\label{errorsum1eqn2}
\begin{split}
\cosh(\alpha) -\dfrac{(-1)^t \binom{-\frac{3}{2}}{t}}{2t}\alpha\sinh(\alpha)-\dfrac{1}{8}\dfrac{(-1)^t \binom{-\frac{3}{2}}{t}}{t^2}&<(24)^t g_{e,1}(t)=1+S_1(t)\\
 < \cosh(\alpha) &-\dfrac{(-1)^t \binom{-\frac{3}{2}}{t}}{2t}\alpha\sinh(\alpha)+\dfrac{13}{25}\dfrac{(-1)^t \binom{-\frac{3}{2}}{t}}{t^2}.
\end{split}
\end{equation}
Now, applying \eqref{errorlem1eqn1} and \eqref{errorlem2eqn1} with $s=1\ \text{and}\ 2$, respectively, to \eqref{errorsum1eqn2}, it follows that for all $k \geq 1$,
$$\sum_{t=k}^{\infty}g_{e,1}(t)\Bigl(\dfrac{1}{\sqrt{n}}\Bigr)^{2t} > \Biggl(\cosh(\alpha)-\dfrac{6\alpha\sinh(\alpha)}{5\sqrt{k+1}}-\dfrac{3}{10 (k+1)^{3/2}}\Biggr) \Bigl(\dfrac{1}{\sqrt{24n}}\Bigr)^{2k} $$
and 
\begin{equation*}
\begin{split}
\sum_{t=k}^{\infty}g_{e,1}(t)\Bigl(\dfrac{1}{\sqrt{n}}\Bigr)^{2t}&<\Biggl(\dfrac{24\cosh(\alpha)}{23}-\dfrac{\alpha\sinh(\alpha)}{2\sqrt{k+1}}+\dfrac{13\cdot 12}{25 \cdot 5}\dfrac{1}{ (k+1)^{3/2}}\Biggr) \Bigl(\dfrac{1}{\sqrt{24n}}\Bigr)^{2k}\\
&<\Biggl(\dfrac{24\cosh(\alpha)}{23}-\dfrac{\alpha\sinh(\alpha)}{2\sqrt{k+1}}+\frac{5}{4}\dfrac{1}{ (k+1)^{3/2}}\Biggr) \Bigl(\dfrac{1}{\sqrt{24n}}\Bigr)^{2k}.
\end{split}
\end{equation*}
\end{proof}

\begin{definition}\label{newdef12}
	For all $k \geq 1$, define
	\begin{equation*}
\begin{split}
L_2(k)&:= \Biggl(-\dfrac{24\cosh(\alpha)}{23}-\dfrac{12}{5\sqrt{k+1}}\Biggr)\Bigl(\dfrac{1}{\sqrt{24}}\Bigr)^{2k}\\
\and  \hspace{2 cm}&\\
U_2(k)&:= \Biggl(-\cosh(\alpha)+\dfrac{4\sqrt{2}\sinh(\alpha)}{\alpha}\sqrt{k+1}+\dfrac{66}{25 \sqrt{k+1}}\Biggr)\Bigl(\dfrac{1}{\sqrt{24}}\Bigr)^{2k}.
\end{split}
\end{equation*}
\end{definition}

\begin{lemma}\label{errorsum2}
Let $L_2(k)$ and $U_2(k)$ be as in Definition \ref{newdef12}. Let $g_{e,2}(t)$ be as in Definition \ref{newdef7}. Then for all $n,k \in \mathbb{Z}_{\geq 1}$,
	\begin{equation}\label{errorsum2eqn1}
	L_2(k)\Bigl(\dfrac{1}{\sqrt{n}}\Bigr)^{2k}<\sum_{t=k}^{\infty}g_{e,2}(t) \Bigl(\dfrac{1}{\sqrt{n}}\Bigr)^{2t} <U_2(k)\Bigl(\dfrac{1}{\sqrt{n}}\Bigr)^{2k}.
	\end{equation}
\end{lemma}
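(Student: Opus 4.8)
\emph{Proof proposal.}
The plan is to turn the two-sided bound on $S_2(t)$ from Lemma \ref{S2lemma} into a pointwise bound on $g_{e,2}(t)$, and then to sum the three resulting elementary series term by term using the preliminary estimates of Section \ref{sec7}. Recall from Definition \ref{newdef7} that $g_{e,2}(t)=\frac{(-1)^{t-1}}{(24)^t}S_2(t)=-\frac{(-1)^t}{(24)^t}S_2(t)$. Let $\theta_t$ denote the quantity estimated in \eqref{S2lemmaeqn}, that is $\theta_t:=\frac{S_2(t)}{\binom{-3/2}{t}}-\frac{(-1)^t}{\binom{-3/2}{t}}\cosh(\alpha)+\frac{\sinh(\alpha)}{\alpha}$, so that $-\frac{11}{10t}<\theta_t<\frac{1}{t}$ for all $t\geq1$. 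Solving for $S_2(t)$ gives $S_2(t)=(-1)^t\cosh(\alpha)-\binom{-3/2}{t}\frac{\sinh(\alpha)}{\alpha}+\binom{-3/2}{t}\theta_t$, hence
$$g_{e,2}(t)=-\frac{\cosh(\alpha)}{(24)^t}+\frac{(-1)^t\binom{-3/2}{t}}{(24)^t}\cdot\frac{\sinh(\alpha)}{\alpha}-\frac{(-1)^t\binom{-3/2}{t}}{(24)^t}\theta_t.$$
Multiplying by $\left(\frac{1}{\sqrt{n}}\right)^{2t}=n^{-t}$ and summing over $t\geq k$ (legitimate since $k\geq1$ keeps every index in the range $t\geq1$ where Lemma \ref{S2lemma} applies) splits the target sum $\sum_{t=k}^{\infty}g_{e,2}(t)\left(\frac{1}{\sqrt{n}}\right)^{2t}$ into $-\cosh(\alpha)\sum_{t=k}^{\infty}(24n)^{-t}$, plus $\frac{\sinh(\alpha)}{\alpha}\sum_{t=k}^{\infty}(-1)^t\binom{-3/2}{t}(24n)^{-t}$, minus $\sum_{t=k}^{\infty}(-1)^t\binom{-3/2}{t}\theta_t(24n)^{-t}$.

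The elementary fact that makes the sign bookkeeping transparent is $(-1)^t\binom{-3/2}{t}=\frac{(3/2)(5/2)\cdots((2t+1)/2)}{t!}>0$ for all $t\geq0$. For the first piece I would apply Lemma \ref{errorlem1}: since $(24n)^{-k}<\sum_{t=k}^{\infty}(24n)^{-t}\leq\frac{24}{23}(24n)^{-k}$, this term lies between $-\frac{24\cosh(\alpha)}{23}(24n)^{-k}$ and $-\cosh(\alpha)(24n)^{-k}$. For the second piece I would apply Lemma \ref{errorlem3}: the sum is positive, so this term is $>0$, and it is $<\frac{4\sqrt{2}\sinh(\alpha)}{\alpha}\sqrt{k+1}\,(24n)^{-k}$. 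For the third piece I would use the one-sided inequalities on $\theta_t$ together with Lemma \ref{errorlem2} at $s=1$ (so that $(k+1)^{s-1/2}=\sqrt{k+1}$): from $\theta_t<\frac{1}{t}$ one gets $-\sum_{t=k}^{\infty}(-1)^t\binom{-3/2}{t}\theta_t(24n)^{-t}>-\sum_{t=k}^{\infty}\frac{(-1)^t\binom{-3/2}{t}}{t}(24n)^{-t}>-\frac{12}{5\sqrt{k+1}}(24n)^{-k}$, while from $\theta_t>-\frac{11}{10t}$ one gets the matching upper bound $<\frac{11}{10}\cdot\frac{12}{5\sqrt{k+1}}(24n)^{-k}=\frac{66}{25\sqrt{k+1}}(24n)^{-k}$. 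Adding the three lower estimates, and separately the three upper estimates, and rewriting $(24n)^{-k}=\left(\frac{1}{\sqrt{24}}\right)^{2k}\left(\frac{1}{\sqrt{n}}\right)^{2k}$, produces exactly $L_2(k)\left(\frac{1}{\sqrt{n}}\right)^{2k}$ and $U_2(k)\left(\frac{1}{\sqrt{n}}\right)^{2k}$, which is \eqref{errorsum2eqn1}.

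The genuinely delicate point is not the structure but reproducing the numerology of $L_2(k)$ and $U_2(k)$ exactly. In particular, one must use the asymmetric bounds $-\frac{11}{10t}<\theta_t<\frac{1}{t}$ in the correct direction for the $\theta_t$-series — the lower estimate via $\theta_t<\frac{1}{t}$ yields the constant $\frac{12}{5}$, the upper estimate via $\theta_t>-\frac{11}{10t}$ yields $\frac{66}{25}$ — rather than the cruder symmetric bound $|\theta_t|<\frac{11}{10t}$, which, although sufficient for the limiting statement in the roadmap, would weaken the explicit lower constant. I also note that Lemmas \ref{errorlem1}, \ref{errorlem2} and \ref{errorlem3} hold verbatim for all $n,k\geq1$, so no separate treatment of small $k$ is needed, and that the positivity of the second series is precisely why the $\frac{\sinh(\alpha)}{\alpha}$-contribution is simply dropped from the lower bound.
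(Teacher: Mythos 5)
Your proposal is correct and follows essentially the same route as the paper: it converts Lemma \ref{S2lemma} into the two-sided pointwise bound on $(24)^t g_{e,2}(t)$ (the paper's \eqref{errorsum2eqn2}), then sums term by term using Lemma \ref{errorlem1}, Lemma \ref{errorlem2} with $s=1$, and Lemma \ref{errorlem3}, with the constants $\frac{12}{5}$ and $\frac{66}{25}$ arising exactly as in the paper.
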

\begin{proof}
	From \eqref{eqn32} and \eqref{S2lemmaeqn}, it follows that for $t \geq 1$,
	\begin{equation}\label{errorsum2eqn2}
	\begin{split}
	-\cosh(\alpha) +(-1)^t \binom{-\frac{3}{2}}{t}\dfrac{\sinh(\alpha)}{\alpha}-\dfrac{(-1)^t \binom{-\frac{3}{2}}{t}}{t}&<(24)^t g_{e,2}(t)=(-1)^{t-1}S_2(t)\\
	< -\cosh(\alpha) &+(-1)^t \binom{-\frac{3}{2}}{t}\dfrac{\sinh(\alpha)}{\alpha}+\dfrac{11}{10}\dfrac{(-1)^t \binom{-\frac{3}{2}}{t}}{t}.
	\end{split}
	\end{equation}
	Now, applying \eqref{errorlem1eqn1}, \eqref{errorlem2eqn1} with $s=1$ and \eqref{errorlem3eqn1} to \eqref{errorsum2eqn2}, it follows that for all $k \geq 1$,
	$$\sum_{t=k}^{\infty}g_{e,2}(t)\Bigl(\dfrac{1}{\sqrt{n}}\Bigr)^{2t} > \Biggl(-\dfrac{24\cosh(\alpha)}{23}-\dfrac{12}{5\sqrt{k+1}}\Biggr) \Bigl(\dfrac{1}{\sqrt{24n}}\Bigr)^{2k} $$
	and 
	$$ \sum_{t=k}^{\infty}g_{e,2}(t)\Bigl(\dfrac{1}{\sqrt{n}}\Bigr)^{2t} < \Biggl(-\cosh(\alpha)+\dfrac{4\sqrt{2}\sinh(\alpha)}{\alpha}\sqrt{k+1}+\dfrac{66}{25}\dfrac{1}{ \sqrt{k+1}}\Biggr) \Bigl(\dfrac{1}{\sqrt{24n}}\Bigr)^{2k}.$$
\end{proof}

\begin{definition}\label{newdef13}
For all $k \geq 1$, define
	\begin{equation*}
\begin{split}
L_3(k)&:= \Biggl(\dfrac{19}{10}\alpha\sinh(\alpha)-\dfrac{109}{10}\cosh(\alpha)\sqrt{k+1}-\dfrac{23}{10}\dfrac{1}{\sqrt{k+1}}\Biggr)\Bigl(\dfrac{1}{\sqrt{24}}\Bigr)^{2k+1}\\
\and  \hspace{2 cm}&\\
U_3(k)&:= \Biggl(2\alpha\sinh(\alpha)+\dfrac{33}{10}\dfrac{1}{\sqrt{k+1}}\Biggr)\Bigl(\dfrac{1}{\sqrt{24}}\Bigr)^{2k+1}.
\end{split}
\end{equation*}
\end{definition}

\begin{lemma}\label{errorsum3}
Let $L_3(k)$ and $U_3(k)$ be as in Definition \ref{newdef13}. Let $g_{o,1}(t)$ be as in Definition \ref{newdef8}. Then for all $n,k \in \mathbb{Z}_{\geq 1}$,
	\begin{equation}\label{errorsum3eqn1}
	L_3(k)\Bigl(\dfrac{1}{\sqrt{n}}\Bigr)^{2k+1}<\sum_{t=k}^{\infty}g_{o,1}(t) \Bigl(\dfrac{1}{\sqrt{n}}\Bigr)^{2t+1} <U_3(k)\Bigl(\dfrac{1}{\sqrt{n}}\Bigr)^{2k+1}.
	\end{equation}
\end{lemma}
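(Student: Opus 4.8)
The plan is to mimic the proofs of Lemmas \ref{errorsum1} and \ref{errorsum2}: turn the pointwise estimate of Lemma \ref{S3lemma} into a coefficient-wise estimate for $g_{o,1}(t)$, multiply by $(1/\sqrt{n})^{2t+1}$, and sum the three resulting tails using Lemmas \ref{errorlem1}, \ref{errorlem2} (with $s=1$) and \ref{errorlem3} (with $(n,s)\mapsto(24n,2)$). Two features demand care. The defining prefactor of $g_{o,1}(t)$ is $-\tfrac{6}{\pi\sqrt{24}}=-\tfrac{1}{\alpha\sqrt{24}}<0$, so upper and lower bounds swap roles when passing from the bracketed sum to $\sum g_{o,1}(t)(1/\sqrt{n})^{2t+1}$; and Lemma \ref{S3lemma} is only available for $t\geq 2$, so the term $t=1$ must be dealt with by hand when $k=1$.

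First I would set $k_{0}:=\max\{k,2\}$ and analyze $\sum_{t=k_{0}}^{\infty}g_{o,1}(t)(1/\sqrt{n})^{2t+1}$. With $D_{t}:=(-1)^{t}\binom{-\frac{3}{2}}{t}=\tfrac{t+1}{2^{2t+1}}\binom{2t+2}{t+1}>0$ (cf.\ \eqref{S2lemprfeqn8}) and $\varepsilon(t)$ denoting the quantity caught between $-\tfrac{71}{100t}$ and $\tfrac{12}{25t}$ in Lemma \ref{S3lemma}, that lemma yields $\binom{-\frac{3}{2}}{t}+S_{3}(t)=\binom{-\frac{3}{2}}{t}\cosh(\alpha)-(-1)^{t}\alpha\sinh(\alpha)+\binom{-\frac{3}{2}}{t}\varepsilon(t)$, whence by \eqref{eqn36}
$$(24)^{t}g_{o,1}(t)=-\frac{6}{\pi\sqrt{24}}\Bigl(D_{t}\cosh(\alpha)-\alpha\sinh(\alpha)+D_{t}\varepsilon(t)\Bigr),\qquad t\geq 2.$$
Multiplying by $(1/\sqrt{n})^{2t+1}$ and summing over $t\geq k_{0}$ produces $-\tfrac{1}{\alpha\sqrt{24}}\,n^{-1/2}\bigl(\cosh(\alpha)T_{1}-\alpha\sinh(\alpha)T_{0}+T_{\varepsilon}\bigr)$, where $T_{1}:=\sum_{t\geq k_{0}}D_{t}(24n)^{-t}$, $T_{0}:=\sum_{t\geq k_{0}}(24n)^{-t}$, and $T_{\varepsilon}:=\sum_{t\geq k_{0}}D_{t}\varepsilon(t)(24n)^{-t}$. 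Lemma \ref{errorlem3} gives $0<T_{1}<4\sqrt{2}\,\sqrt{k_{0}+1}\,(24n)^{-k_{0}}$; Lemma \ref{errorlem1} gives $(24n)^{-k_{0}}<T_{0}\leq\tfrac{24}{23}(24n)^{-k_{0}}$; and since $D_{t}/t=(-1)^{t}\binom{-\frac{3}{2}}{t}/t$, Lemma \ref{errorlem2} with $s=1$ combined with the bounds on $\varepsilon(t)$ gives $-\tfrac{852}{500\sqrt{k_{0}+1}}(24n)^{-k_{0}}<T_{\varepsilon}<\tfrac{144}{125\sqrt{k_{0}+1}}(24n)^{-k_{0}}$.

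Because the prefactor is negative, a lower bound for the sum follows from an upper bound for the bracket, $\cosh(\alpha)T_{1}-\alpha\sinh(\alpha)T_{0}+T_{\varepsilon}<(24n)^{-k_{0}}\bigl(4\sqrt{2}\cosh(\alpha)\sqrt{k_{0}+1}-\alpha\sinh(\alpha)+\tfrac{144}{125\sqrt{k_{0}+1}}\bigr)$, while an upper bound for the sum follows from the lower bound $\cosh(\alpha)T_{1}-\alpha\sinh(\alpha)T_{0}+T_{\varepsilon}>-(24n)^{-k_{0}}\bigl(\tfrac{24}{23}\alpha\sinh(\alpha)+\tfrac{852}{500\sqrt{k_{0}+1}}\bigr)$. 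Using $\tfrac{6}{\pi\sqrt{24}}\,n^{-1/2}(24n)^{-k_{0}}=\tfrac{1}{\alpha}(1/\sqrt{24})^{2k_{0}+1}(1/\sqrt{n})^{2k_{0}+1}$, this collapses, for $k=k_{0}\geq 2$, to exactly $L_{3}(k)(1/\sqrt{n})^{2k+1}$ and $U_{3}(k)(1/\sqrt{n})^{2k+1}$ as lower and upper bounds, once one checks the elementary numerical inequalities $\tfrac{19}{10}\alpha<1$, $\tfrac{4\sqrt{2}}{\alpha}<\tfrac{109}{10}$, $\tfrac{144}{125\alpha}<\tfrac{23}{10}$ (for $L_{3}$) and $\tfrac{24}{23}<2\alpha$, $\tfrac{852}{500\alpha}<\tfrac{33}{10}$ (for $U_{3}$), all valid for $\alpha=\pi/6$. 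For the remaining case $k=1$ (so $k_{0}=2$) I would split $\sum_{t=1}^{\infty}g_{o,1}(t)(1/\sqrt{n})^{2t+1}=g_{o,1}(1)(1/\sqrt{n})^{3}+\sum_{t=2}^{\infty}g_{o,1}(t)(1/\sqrt{n})^{2t+1}$, bound the last sum by the $k_{0}=2$ estimate just obtained, use $(1/\sqrt{n})^{5}=n^{-1}(1/\sqrt{n})^{3}\leq(1/\sqrt{n})^{3}$ (tracking signs) to absorb it into a constant multiple of $(1/\sqrt{n})^{3}$, and add the explicit value $g_{o,1}(1)=-\tfrac{432+\pi^{2}}{2304\sqrt{6}\,\pi}$ from \eqref{eqn36}; a short numerical comparison then places the resulting constants comfortably inside the coefficients of $L_{3}(1)$ and $U_{3}(1)$. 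I expect the only genuine obstacle to be the bookkeeping around the sign reversal together with confirming that the rather tight rational constants inherited from Lemma \ref{S3lemma} still leave room after being scaled by $1/\alpha$ and the geometric-tail factors; apart from that the argument is the same routine as in Lemmas \ref{errorsum1}--\ref{errorsum2}.
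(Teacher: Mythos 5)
Your proposal is correct and follows essentially the same route as the paper: the coefficient-wise bound from Lemma \ref{S3lemma} (with the sign flip caused by the negative prefactor $-\tfrac{6}{\pi\sqrt{24}}$), summed via Lemmas \ref{errorlem1}, \ref{errorlem2} with $s=1$, and \ref{errorlem3}, with the same numerical margins $\tfrac{19}{10}\alpha<1$, $\tfrac{24\sqrt{2}}{\pi}<\tfrac{109}{10}$, $\tfrac{24}{23}<2\alpha$, etc. The only cosmetic difference is at $t=1$: the paper checks numerically that the coefficient inequality \eqref{errorsum3eqn2} already holds for $t=1$ (so no case split on $k$ is needed), whereas you split off the $t=1$ term of the sum and absorb it afterwards — both work.
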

\begin{proof}
Define $c_{1}(t):=-\dfrac{6}{\pi}(-1)^t \binom{-\frac{3}{2}}{t}$.
	From \eqref{eqn36} and \eqref{S3lemmaeqn}, it follows that for $t \geq 2$,
	\begin{equation}\label{errorsum3eqn2}
	\begin{split}
	\dfrac{6}{\pi}\alpha\sinh(\alpha)-\dfrac{6}{\pi}\cosh(\alpha)(-1)^t \binom{-\frac{3}{2}}{t}-\dfrac{12\cdot 6}{25\cdot \pi}\dfrac{(-1)^t \binom{-\frac{3}{2}}{t}}{t}&<(\sqrt{24})^{2t+1} g_{o,1}(t)=c_{1}(t)\Biggl(1+\dfrac{S_3(t)}{\binom{-\frac{3}{2}}{t}}\Biggr)\\
	< 	\dfrac{6}{\pi}\alpha\sinh(\alpha) -\dfrac{6}{\pi}\cosh(\alpha)(-1)^t \binom{-\frac{3}{2}}{t}&+\dfrac{71\cdot 6}{100\cdot \pi}\dfrac{(-1)^t \binom{-\frac{3}{2}}{t}}{t}.
	\end{split}
	\end{equation}
A numerical check confirms that \eqref{errorsum3eqn2} also holds for $t=1$; see \eqref{eqn36}.  Now, applying \eqref{errorlem1eqn1}, \eqref{errorlem2eqn1} with $s=1$, and \eqref{errorlem3eqn1} to \eqref{errorsum3eqn2}, it follows that for all $k \geq 1$,
	\begin{equation*}
	\begin{split}
	\sum_{t=k}^{\infty}g_{o,1}(t)\Bigl(\dfrac{1}{\sqrt{n}}\Bigr)^{2t+1} &> \Biggl(\dfrac{6}{\pi}\alpha\sinh(\alpha)-\dfrac{6\cdot 4\sqrt{2}}{\pi}\cosh(\alpha)\sqrt{k+1}-\dfrac{12 \cdot 6\cdot 12}{25\cdot 5\cdot \pi}\dfrac{1}{\sqrt{k+1}}\Biggr) \Bigl(\dfrac{1}{\sqrt{24n}}\Bigr)^{2k+1}\\
	&>\Biggl(\dfrac{19}{10}\alpha\sinh(\alpha)-\dfrac{109}{10}\cosh(\alpha)\sqrt{k+1}-\dfrac{23}{10}\dfrac{1}{\sqrt{k+1}}\Biggr) \Bigl(\dfrac{1}{\sqrt{24n}}\Bigr)^{2k+1}
	\end{split}
	\end{equation*}
	and 
	\begin{equation*}
	\begin{split}
	\sum_{t=k}^{\infty}g_{o,1}(t)\Bigl(\dfrac{1}{\sqrt{n}}\Bigr)^{2t+1} &< \Biggl(\dfrac{6\cdot 24}{23\cdot\pi}\alpha\sinh(\alpha)+\dfrac{71\cdot 6 \cdot 12}{100 \cdot 5\cdot \pi}\dfrac{1}{\sqrt{k+1}}\Biggr) \Bigl(\dfrac{1}{\sqrt{24n}}\Bigr)^{2k+1} \\
	&< \Biggl(2\alpha\sinh(\alpha)+\dfrac{33}{10}\dfrac{1}{\sqrt{k+1}}\Biggr) \Bigl(\dfrac{1}{\sqrt{24n}}\Bigr)^{2k+1}.
	\end{split}
	\end{equation*}
\end{proof}

\begin{definition}\label{newdef14}
	For all $k \geq 1$, define
	\begin{equation*}
\begin{split}
L_4(k)&:= \Biggl(\dfrac{1}{4}\dfrac{\cosh(\alpha)}{\sqrt{k+1}}-\dfrac{11}{20}\alpha\sinh(\alpha)-\dfrac{41}{50}\dfrac{1}{(k+1)^{3/2}}\Biggr)\Bigl(\dfrac{1}{\sqrt{24}}\Bigr)^{2k+1}\\
\and  \hspace{2 cm}&\\
U_4(k)&:= \Biggl(\dfrac{63}{100}\dfrac{\cosh(\alpha)}{\sqrt{k+1}}-\dfrac{13}{25}\alpha\sinh(\alpha)+\dfrac{21}{50}\dfrac{1}{(k+1)^{3/2}}\Biggr)\Bigl(\dfrac{1}{\sqrt{24}}\Bigr)^{2k+1}.
\end{split}
\end{equation*}
\end{definition}

\begin{lemma}\label{errorsum4}
Let $L_4(k)$ and $U_4(k)$ be as in Definition \ref{newdef14}. Let $g_{o,2}$ be as in Definition \ref{newdef9}. Then for all $n,k \in \mathbb{Z}_{\geq 1}$, 
	\begin{equation}\label{errorsum4eqn1}
	L_4(k)\Bigl(\dfrac{1}{\sqrt{n}}\Bigr)^{2k+1}<\sum_{t=k}^{\infty}g_{o,2}(t) \Bigl(\dfrac{1}{\sqrt{n}}\Bigr)^{2t+1} <U_4(k)\Bigl(\dfrac{1}{\sqrt{n}}\Bigr)^{2k+1}.
	\end{equation}
\end{lemma}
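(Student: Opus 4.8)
The proof will mirror, almost verbatim, the proofs of Lemmas~\ref{errorsum1}--\ref{errorsum3}: I convert the pointwise two-sided estimate for $S_4(t)$ supplied by Lemma~\ref{S4lemma} into a pointwise estimate for the summand $g_{o,2}(t)\,(1/\sqrt n)^{2t+1}$, and then sum over $t\ge k$ with the elementary tail estimates of Lemmas~\ref{errorlem1} and~\ref{errorlem2}.

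First I would normalise the summand. Using \eqref{eqn40} together with $\alpha=\pi/6$ and $\sqrt{24}=2\sqrt6$, one has
\[
\bigl(\sqrt{24}\bigr)^{2t+1}g_{o,2}(t)=-\alpha\,S_4(t)=c_2(t)\,\frac{S_4(t)}{(-1)^t\binom{-3/2}{t}},\qquad c_2(t):=-\alpha\,(-1)^t\binom{-3/2}{t},
\]
where $c_2(t)$ is the analogue of the auxiliary factor $c_1(t)$ used in the proof of Lemma~\ref{errorsum3}; note that $c_2(t)<0$, since $(-1)^t\binom{-3/2}{t}>0$ for $t\ge1$. Substituting the two-sided bound \eqref{S4lemmaeqn} and multiplying through by $c_2(t)$ --- which \emph{reverses} both inequalities --- shows that, for every $t\ge1$, the quantity $\bigl(\sqrt{24}\bigr)^{2t+1}g_{o,2}(t)$ lies strictly between
\[
-\sinh\alpha+\frac{\alpha\cosh\alpha}{2}\cdot\frac{(-1)^t\binom{-3/2}{t}}{t}-\frac{13\alpha}{20}\cdot\frac{(-1)^t\binom{-3/2}{t}}{t^2}
\]
and the same expression with $+\tfrac{\alpha}{3}$ in place of $-\tfrac{13\alpha}{20}$.

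Next I would multiply by $(1/\sqrt{24n})^{2t+1}$ and sum over $t\ge k$, exploiting $\sum_{t=k}^{\infty}g_{o,2}(t)(1/\sqrt n)^{2t+1}=\sum_{t=k}^{\infty}\bigl[(\sqrt{24})^{2t+1}g_{o,2}(t)\bigr](1/\sqrt{24n})^{2t+1}$. The three resulting tail sums are precisely those the preliminary lemmas are built for: $\sum_{t=k}^{\infty}(1/\sqrt{24n})^{2t+1}$ is enclosed by Lemma~\ref{errorlem1}, $\sum_{t=k}^{\infty}\frac{(-1)^t\binom{-3/2}{t}}{t}\,(1/\sqrt{24n})^{2t+1}$ by Lemma~\ref{errorlem2} with $s=1$, and $\sum_{t=k}^{\infty}\frac{(-1)^t\binom{-3/2}{t}}{t^2}\,(1/\sqrt{24n})^{2t+1}$ by Lemma~\ref{errorlem2} with $s=2$. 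In each of the three pieces I pick the endpoint of the enclosing interval favourable for the lower (resp.\ upper) bound; collecting the resulting coefficients and then replacing the $\alpha$-dependent quantities by crude rational surrogates --- exactly as in the proofs of Lemmas~\ref{errorsum1}--\ref{errorsum3} --- reduces the two-sided estimate to $L_4(k)(1/\sqrt n)^{2k+1}$ and $U_4(k)(1/\sqrt n)^{2k+1}$ of Definition~\ref{newdef14}.

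I do not expect a genuine obstacle here: the substantive step --- the closed form and the two-sided estimate \eqref{S4lemmaeqn} for $S_4(t)$, obtained via \texttt{Sigma} --- is already discharged in Lemma~\ref{S4lemma}, so what remains is mechanical. The one point demanding care is sign bookkeeping: because $c_2(t)<0$ the inequalities of \eqref{S4lemmaeqn} flip, so the ``large end'' of Lemma~\ref{errorlem1} produces the \emph{lower} bound for the $-\sinh\alpha$ contribution whereas the ``large end'' of Lemma~\ref{errorlem2} with $s=1$ produces the \emph{upper} bound for the positive $\tfrac{\alpha\cosh\alpha}{2}$ contribution. Beyond that, only the finitely many numerical comparisons between the $\alpha$-expressions and the rational constants appearing in Definition~\ref{newdef14} need to be checked.
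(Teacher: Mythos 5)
Your strategy coincides with the paper's own proof: both normalise via $(\sqrt{24})^{2t+1}g_{o,2}(t)=c_2(t)\,S_4(t)\big/\bigl((-1)^t\binom{-3/2}{t}\bigr)$ with $c_2(t)=-\frac{\pi}{6}(-1)^t\binom{-3/2}{t}<0$, flip the two-sided estimate \eqref{S4lemmaeqn}, and then sum the three resulting pieces against Lemmas \ref{errorlem1} and \ref{errorlem2} with $s=1,2$. Your sign bookkeeping is correct, and in particular your constant term is right: $c_2(t)\cdot\frac{(-1)^t}{\binom{-3/2}{t}}\cdot\frac{\sinh\alpha}{\alpha}=-\frac{\pi}{6}\cdot\frac{\sinh\alpha}{\alpha}=-\sinh\alpha$.

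The genuine gap is the step you defer as ``mechanical'': the collected coefficients do \emph{not} reduce to $L_4(k)$ of Definition \ref{newdef14}, and no choice of ``crude rational surrogates'' will make them. With the correct constant $-\sinh\alpha$, summing against Lemma \ref{errorlem1} gives a lower bound whose leading constant is $-\frac{24}{23}\sinh\alpha\approx-0.572$, whereas $L_4(k)$ carries $-\frac{11}{20}\alpha\sinh\alpha\approx-0.158$; these are irreconcilable. Concretely, for $k=1$ and $n$ large one has $\sum_{t\ge1}g_{o,2}(t)\,n^{-t-1/2}=g_{o,2}(1)\,n^{-3/2}+O(n^{-5/2})$ with $g_{o,2}(1)=-\frac{\pi}{288\sqrt{6}}\bigl(\frac{5}{8}+\frac{\alpha^2}{48}\bigr)\approx-0.00281$, while $L_4(1)\approx-0.00209$, so the asserted lower bound in \eqref{errorsum4eqn1} fails. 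The source of the mismatch is in the paper itself: display \eqref{errorsum4eqn2} records the constant term as $-\frac{\pi}{6}\alpha\sinh(\alpha)$ where the algebra (note Lemma \ref{S4lemma} involves $\frac{\sinh\alpha}{\alpha}$, not the $\alpha\sinh\alpha$ of Lemma \ref{S3lemma}) yields $-\sinh\alpha$, and Definition \ref{newdef14} is calibrated to that slip. The upper half survives, since $-\sinh\alpha<-\frac{13}{25}\alpha\sinh\alpha$ makes $U_4(k)$ still valid (merely not tight), but to obtain a true lower bound you would have to replace $-\frac{11}{20}\alpha\sinh(\alpha)$ in $L_4(k)$ by a constant $\le-\frac{24}{23}\sinh\alpha$; the slack in the downstream estimates of Lemmas \ref{mainlemmaevencase} and \ref{mainlemmaoddcase} can absorb such a repair, but as written your proof cannot be completed because the statement it targets is not correct.
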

\begin{proof}
	Define $c_{2}(t):=-\dfrac{\pi}{6}(-1)^t \binom{-\frac{3}{2}}{t}$.
	From \eqref{eqn40} and \eqref{S4lemmaeqn}, it follows that for $t \geq 1$,
	\begin{equation}\label{errorsum4eqn2}
	\begin{split}
	\dfrac{\pi}{6\cdot 2}\cosh(\alpha)\dfrac{(-1)^t \binom{-\frac{3}{2}}{t}}{t}-\dfrac{\pi}{6}\alpha\sinh(\alpha)-\dfrac{13\cdot \pi}{20\cdot 6}\dfrac{(-1)^t \binom{-\frac{3}{2}}{t}}{t^2}&<(\sqrt{24})^{2t+1} g_{o,2}(t)=c_{2}(t)\dfrac{S_4(t)}{(-1)^t\binom{-\frac{3}{2}}{t}}\\
	< 	\dfrac{\pi}{6\cdot 2}\cosh(\alpha)\dfrac{(-1)^t \binom{-\frac{3}{2}}{t}}{t}-\dfrac{\pi}{6}\alpha\sinh(\alpha)&+\dfrac{\pi}{6\cdot 3}\dfrac{(-1)^t \binom{-\frac{3}{2}}{t}}{t^2}.
	\end{split}
	\end{equation}
	Now, applying \eqref{errorlem1eqn1} and \eqref{errorlem2eqn1} with $s=1 \ \text{and}\ 2$, respectively, to \eqref{errorsum3eqn2}, it follows that for all $k \geq 1$,
	\begin{equation*}
	\begin{split}
	\sum_{t=k}^{\infty}g_{o,2}(t)\Bigl(\dfrac{1}{\sqrt{n}}\Bigr)^{2t+1} &> \Biggl(\dfrac{\pi}{12}\dfrac{\cosh(\alpha)}{\sqrt{k+1}}-\dfrac{24 \cdot \pi}{23 \cdot 6}\alpha\sinh(\alpha)-\dfrac{13\cdot 12\cdot \pi}{20 \cdot 6 \cdot 5}\dfrac{1}{(k+1)^{3/2}}\Biggr) \Bigl(\dfrac{1}{\sqrt{24n}}\Bigr)^{2k+1}\\
	&>\Biggl(\dfrac{1}{4}\dfrac{\cosh(\alpha)}{\sqrt{k+1}}-\dfrac{11}{20}\alpha\sinh(\alpha)-\dfrac{41}{50}\dfrac{1}{(k+1)^{3/2}}\Biggr) \Bigl(\dfrac{1}{\sqrt{24n}}\Bigr)^{2k+1}
	\end{split}
	\end{equation*}
	and 
	\begin{equation*}
	\begin{split}
	\sum_{t=k}^{\infty}g_{o,2}(t)\Bigl(\dfrac{1}{\sqrt{n}}\Bigr)^{2t+1} &< \Biggl(\dfrac{12 \cdot \pi}{12 \cdot 5}\dfrac{\cosh(\alpha)}{\sqrt{k+1}}-\dfrac{\pi}{6}\alpha\sinh(\alpha)+\dfrac{12\cdot \pi}{6 \cdot 3\cdot 5}\dfrac{1}{(k+1)^{3/2}}\Biggr) \Bigl(\dfrac{1}{\sqrt{24n}}\Bigr)^{2k+1} \\
	&< \Biggl(\dfrac{63}{100}\dfrac{\cosh(\alpha)}{\sqrt{k+1}}-\dfrac{13}{25}\alpha\sinh(\alpha)+\dfrac{21}{50}\dfrac{1}{(k+1)^{3/2}}\Biggr) \Bigl(\dfrac{1}{\sqrt{24n}}\Bigr)^{2k+1}.
	\end{split}
	\end{equation*}
\end{proof}
\begin{definition}\label{newdef15}
	For $k \geq 1$, define
	\begin{equation*}
	\widehat{L}_2(k):=\dfrac{1}{\alpha^k}\dfrac{1}{\sqrt{24}^k}\Bigl(1-\dfrac{1}{4\sqrt{n}}\Bigr)\ \ \and\ \ \widehat{U}_2(k):=\dfrac{1}{\alpha^k}\dfrac{1}{\sqrt{24}^k}\Bigl(1+\dfrac{k}{3n}\Bigr).
	\end{equation*}
\end{definition}
\begin{definition}\label{newdef16}
For $k \geq 1$, define
$$n_0(k):=\dfrac{k+2}{24}.$$	
\end{definition}
\begin{lemma}\label{errorlem5} Let $\widehat{L}_2(k)$, and $\widehat{U}_2(k)$ be as in Definition \ref{newdef15}. Let $n_0(k)$ be as in Definition \ref{newdef16}. Then for all $k \in \mathbb{Z}_{\geq 1}$ and $n > n_0(k)$,
	\begin{equation}\label{errorlem5eqn1}
	\frac{e^{\pi\sqrt{2n/3}}}{4n\sqrt{3}}\frac{\widehat{L}_2(k)}{\sqrt{n}^k}<\dfrac{\sqrt{12}\ e^{\mu(n)}}{24n-1}\frac{1}{\mu(n)^k}<\frac{e^{\pi\sqrt{2n/3}}}{4n\sqrt{3}}\frac{\widehat{U}_2(k)}{\sqrt{n}^k}.
	\end{equation}	
\end{lemma}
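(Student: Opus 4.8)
The plan is to rewrite the middle term of \eqref{errorlem5eqn1} exactly as $\frac{e^{\pi\sqrt{2n/3}}}{4n\sqrt{3}}\cdot\frac{1}{\alpha^{k}\sqrt{24}^{k}\sqrt{n}^{k}}$ times a single correction factor $Q(n,k)$, and then to bound $Q(n,k)$ from both sides. Recall $\alpha=\pi/6$, so $\mu(n)=\alpha\sqrt{24n-1}$ and $\pi\sqrt{2n/3}=\alpha\sqrt{24n}$; hence $\mu(n)^{k}=\alpha^{k}(24n)^{k/2}\bigl(1-\tfrac{1}{24n}\bigr)^{k/2}$, and with $A_{1}(n)$ as in \eqref{eqn2} one has $e^{\mu(n)}=e^{\pi\sqrt{2n/3}}A_{1}(n)$ together with $\mu(n)-\pi\sqrt{2n/3}=-\alpha/(\sqrt{24n}+\sqrt{24n-1})$, i.e. $A_{1}(n)=e^{-\alpha/(\sqrt{24n}+\sqrt{24n-1})}$. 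Combining these with $\frac{\sqrt{12}}{24n-1}=\frac{1}{4n\sqrt{3}}\cdot\frac{1}{1-1/(24n)}$ gives the exact identity
$$\frac{\sqrt{12}\,e^{\mu(n)}}{24n-1}\,\frac{1}{\mu(n)^{k}}=\frac{e^{\pi\sqrt{2n/3}}}{4n\sqrt{3}}\cdot\frac{1}{\alpha^{k}\sqrt{24}^{k}\sqrt{n}^{k}}\cdot Q(n,k),\qquad Q(n,k):=\frac{A_{1}(n)}{\bigl(1-\tfrac{1}{24n}\bigr)^{1+k/2}}.$$
By Definition \ref{newdef15}, \eqref{errorlem5eqn1} is then equivalent to the two-sided bound $1-\tfrac{1}{4\sqrt{n}}<Q(n,k)<1+\tfrac{k}{3n}$.

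For the lower bound, since $0<1-\tfrac{1}{24n}<1$ (note $24n>1$ as $n>n_{0}(k)$) and $1+k/2>0$, we have $\bigl(1-\tfrac{1}{24n}\bigr)^{1+k/2}<1$, hence $Q(n,k)>A_{1}(n)$. Using $e^{-y}>1-y$ for $y>0$ and $\sqrt{24n}+\sqrt{24n-1}>\sqrt{24n}$ we get
$$Q(n,k)>A_{1}(n)>1-\frac{\alpha}{\sqrt{24n}+\sqrt{24n-1}}>1-\frac{\alpha}{\sqrt{24n}}=1-\frac{\pi}{6\sqrt{24}\,\sqrt{n}}>1-\frac{1}{4\sqrt{n}},$$
the last step because $\tfrac{\pi}{6\sqrt{24}}<\tfrac14$.

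For the upper bound, $A_{1}(n)<1$ gives $Q(n,k)<\bigl(1-\tfrac{1}{24n}\bigr)^{-(1+k/2)}$. Bernoulli's inequality yields $\bigl(1-\tfrac{1}{24n}\bigr)^{1+k/2}\ge 1-\frac{1+k/2}{24n}$, and the hypothesis $n>n_{0}(k)=\frac{k+2}{24}$ is precisely $\frac{1+k/2}{24n}<\frac12$, so $1-\frac{1+k/2}{24n}>\frac12>0$ and
$$Q(n,k)<\frac{1}{1-\frac{1+k/2}{24n}}<1+\frac{2(1+k/2)}{24n}=1+\frac{k+2}{24n}\le 1+\frac{8k}{24n}=1+\frac{k}{3n},$$
where the last inequality uses $k+2\le 8k$ for all $k\ge1$. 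Together with the identity of the first paragraph this proves \eqref{errorlem5eqn1}.

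Everything here is elementary; the two places needing a bit of attention are the algebraic bookkeeping in the first step, where the factor $\bigl(1-\tfrac{1}{24n}\bigr)^{1+k/2}$ must be assembled from the three sources $e^{\mu(n)}$, $\mu(n)^{k}$, and $\sqrt{12}/(24n-1)$, and the two numerical checks $\tfrac{\pi}{6\sqrt{24}}<\tfrac14$ and $k+2\le 8k$. In particular, the value $n_{0}(k)=(k+2)/24$ of Definition \ref{newdef16} is dictated exactly by the requirement $\frac{1+k/2}{24n}<\frac12$ that makes the Bernoulli step in the upper bound valid.
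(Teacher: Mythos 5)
Your proof is correct, and it follows the same skeleton as the paper's: both rewrite the middle term as $\frac{e^{\pi\sqrt{2n/3}}}{4n\sqrt{3}}\cdot\frac{1}{(\alpha\sqrt{24})^k\sqrt{n}^k}$ times the correction factor $Q(n,k)=A_1(n)\bigl(1-\tfrac{1}{24n}\bigr)^{-(k+2)/2}$ and then bound the two constituent factors separately. The difference lies entirely in how those factors are estimated. For the exponential factor, the paper invokes Lemma \ref{BPRZ1} (a tail bound for the binomial series of $\sqrt{1-\tfrac{1}{24n}}$) to get $\sqrt{1-\tfrac{1}{24n}}-1>-\tfrac{1}{12n}$ and hence $A_1(n)>e^{-\frac{\pi}{12}\sqrt{2/(3n)}}>1-\tfrac{1}{4\sqrt{n}}$, whereas you use the exact identity $\mu(n)-\pi\sqrt{2n/3}=-\alpha/(\sqrt{24n}+\sqrt{24n-1})$ together with $e^{-y}>1-y$; your route is slightly sharper and avoids the imported lemma. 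For the factor $\bigl(1-\tfrac{1}{24n}\bigr)^{-(k+2)/2}$, the paper splits into the cases $k$ even and $k$ odd and applies the series-tail Lemmas \ref{BPRZ} and \ref{BPRZ2} to reach $1<\bigl(1-\tfrac{1}{24n}\bigr)^{-(k+2)/2}<1+\tfrac{k}{3n}$ for $n>n_0(k)$, whereas you get the same conclusion from the generalized Bernoulli inequality plus $\tfrac{1}{1-y}<1+2y$ for $0<y<\tfrac12$, with no case split; your observation that $n>n_0(k)$ is exactly the condition $\tfrac{1+k/2}{24n}<\tfrac12$ needed for this step also explains the provenance of Definition \ref{newdef16} more transparently than the paper does. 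In short: same decomposition and same final constants, but your estimates are more elementary and self-contained, at no cost in strength.
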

\begin{proof}
Define 
\begin{equation*}
\mathcal{E}(n,k):=\dfrac{\sqrt{12}\ e^{\mu(n)}}{24n-1}\frac{1}{\mu(n)^k},\  \mathcal{U}(n,k)= \frac{e^{\pi\sqrt{2n/3}}}{4n\sqrt{3}}\frac{1}{\sqrt{n}^k}
\end{equation*}
and
$$\mathcal{Q}(n,k):=\dfrac{\mathcal{E}(n,k)}{\mathcal{U}(n,k)}=\mathcal{Q}(n,k)=\dfrac{e^{\pi\sqrt{\frac{2n}{3}}\Bigl(\sqrt{1-\frac{1}{24n}}-1\Bigr)}}{\alpha^k}\dfrac{1}{\sqrt{24}^k}\Bigl(1-\dfrac{1}{24n}\Bigr)^{-\frac{k+2}{2}}.$$
Using \eqref{BPRZ1eqn1} with $(m,n,s) \mapsto (1,24n,1)$, we obtain for all $n \geq 1$,
\begin{equation*}
-\dfrac{1}{12n} <\sqrt{1-\dfrac{1}{24n}}-1=\sum_{m=1}^{\infty}\binom{1/2}{m}\dfrac{(-1)^m}{(24n)^m}<0,
\end{equation*}
which implies that for $n \geq 1$, 
\begin{equation}\label{errorlem5eqn2}
\Bigl(1-\dfrac{1}{4\sqrt{n}}\Bigr)< e^{-\frac{\pi}{12}\sqrt{\frac{2}{3n}}}<e^{\pi\sqrt{\frac{2n}{3}}\Bigl(\sqrt{1-\frac{1}{24n}}-1\Bigr)}<1.
\end{equation}
Hence
\begin{equation}\label{errorlem5eqn3}
\dfrac{1}{(\alpha\cdot \sqrt{24})^k}\Bigl(1-\dfrac{1}{24n}\Bigr)^{-\frac{k+2}{2}}\Bigl(1-\dfrac{1}{4\sqrt{n}}\Bigr)<\mathcal{Q}(n,k)<\dfrac{1}{(\alpha\cdot \sqrt{24})^k}\Bigl(1-\dfrac{1}{24n}\Bigr)^{-\frac{k+2}{2}}.
\end{equation}
In order to estimate $\Bigl(1-\dfrac{1}{24n}\Bigr)^{-\frac{k+2}{2}}$, we need to split into two cases depending on $k$ is even or odd.

For $k=2\ell$ with $\ell \in \mathbb{Z}_{\geq 0}$: 
\begin{equation*}
\Bigl(1-\dfrac{1}{24n}\Bigr)^{-\frac{k+2}{2}}=\Bigl(1-\dfrac{1}{24n}\Bigr)^{-(\ell+1)} =1+\sum_{j=1}^{\infty}\binom{-(\ell+1)}{j}\dfrac{(-1)^j}{(24n)^j}.
\end{equation*}
From \eqref{BPRZ2eqn1} with $(m,s,n)\mapsto (1,\ell+1,24n)$, for all $n > \frac{\ell+1}{12}$, we get
\begin{equation*}
0< \sum_{j=1}^{\infty}\binom{-(\ell+1)}{j}\dfrac{(-1)^j}{(24n)^j} < \beta_{1,24n}(\ell+1)=\dfrac{\ell+1}{12n},
\end{equation*}
which is equivalent to
\begin{equation}\label{errorlem5eqn4}
1< \Bigl(1-\dfrac{1}{24n}\Bigr)^{-\frac{k+2}{2}}<1+\dfrac{k+2}{24n}\ \ \text{for all}\ \ n > \frac{k+2}{24}.
\end{equation}

For $k=2\ell+1$ with $\ell \in \mathbb{Z}_{\geq 0}$: 
\begin{equation*}
\Bigl(1-\dfrac{1}{24n}\Bigr)^{-\frac{k+2}{2}}=\Bigl(1-\dfrac{1}{24n}\Bigr)^{-\frac{2\ell+3}{2}} =1+\sum_{j=1}^{\infty}\binom{-\frac{2\ell+3}{2}}{j}\dfrac{(-1)^j}{(24n)^j}.
\end{equation*}
Using \eqref{BPRZeqn1} with $(m,s,n)\mapsto (1,\ell+2,24n)$, for all $n > \frac{\ell+2}{24}$, we get
\begin{equation*}
0< \sum_{j=1}^{\infty}\binom{-\frac{2\ell+3}{2}}{j}\dfrac{(-1)^j}{(24n)^j} < b_{1,24n}(\ell+2)=\dfrac{\ell+2}{6n}
\end{equation*}
which is equivalent to
\begin{equation}\label{errorlem5eqn5}
1< \Bigl(1-\dfrac{1}{24n}\Bigr)^{-\frac{k+2}{2}}<1+\dfrac{k+3}{12n} \leq 1+\dfrac{k}{3n}\ \ \text{for all}\ \ n > \frac{k+3}{48}.
\end{equation}
From \eqref{errorlem5eqn4} and \eqref{errorlem5eqn5}, for all $n > \frac{k+2}{24}$ it follows that
\begin{equation}\label{errorlem5eqn6}
1< \Bigl(1-\dfrac{1}{24n}\Bigr)^{-\frac{k+2}{2}}<1+\dfrac{k+3}{12n} \leq 1+\dfrac{k}{3n}.
\end{equation}
Combining \eqref{errorlem5eqn3} and \eqref{errorlem5eqn6} concludes the proof.

\end{proof}

\section{Main Theorem}\label{mainthmsec}
\begin{definition}\label{newdef17}
	For $w \in \mathbb{Z}_{\geq 1}$, define
	\begin{equation*}
	\bigl(\gamma_0(w), \gamma_1(w)\bigr):=
	\begin{cases}
	(23,24), \quad \text{if $w$ is even}\\
	(15,17), \quad \text{if $w$ is odd}
	\end{cases}.
	\end{equation*}
\end{definition}
\begin{definition}\label{newdef18}
Let $\gamma_0(w)$ and $\gamma_1(w)\bigr)$ be as in Definition \ref{newdef17}. Then for all $w \in \mathbb{Z}_{\geq 1}$, define
\begin{equation*}
L(w):=-\gamma_0(w) \dfrac{\sqrt{\lceil w/2\rceil+1}}{\sqrt{24}^w}\ \ \text{and}\ \ U(w):=\gamma_1(w) \dfrac{\sqrt{\lceil w/2\rceil+1}}{\sqrt{24}^w}.
\end{equation*}	
\end{definition}

\begin{lemma}\label{mainlemmaevencase}
Let $\widehat{g}(k)$ be as in Theorem \ref{bprz1} and $n_0(k)$ as in Definition \ref{newdef16}. Let $g(t)$ be as in \eqref{g(t)definition}. Let $L(w)$ and $U(w)$ be as in Definition \ref{newdef18}. If $m \in \mathbb{Z}_{\geq 1}$ and $n>\max\bigl\{1, n_0(2m), \widehat{g}(2m)\bigr\}$, then
\begin{equation*}
\frac{e^{\pi\sqrt{2n/3}}}{4n\sqrt{3}} \Biggl(\sum_{t=0}^{2m-1}g(t)\Bigl(\dfrac{1}{\sqrt{n}}\Bigr)^t+\dfrac{L(2m)}{\sqrt{n}^{2m}}\Biggr)< p(n) < \frac{e^{\pi\sqrt{2n/3}}}{4n\sqrt{3}} \Biggl(\sum_{t=0}^{2m-1}g(t)\Bigl(\dfrac{1}{\sqrt{n}}\Bigr)^t+\dfrac{U(2m)}{\sqrt{n}^{2m}}\Biggr).
\end{equation*}
\end{lemma}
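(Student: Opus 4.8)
The plan is to feed the explicit two‑sided bound \eqref{bprzeqn1} of Theorem \ref{bprz1}, taken with $k=2m$, through the identity of Lemma \ref{newlemma2} and the tail estimates of Section \ref{sect3}. Since $m\geq 1$ gives $2m\geq 2$ and $n>\widehat g(2m)$, Theorem \ref{bprz1} applies as long as $(n,2m)\neq(6,2)$; the single pair it excludes occurs only for $m=1$, $n=6$, and there one checks the two asserted inequalities directly against $p(6)=11$. For all remaining $n$ we obtain
\begin{equation*}
\frac{\sqrt{12}\,e^{\mu(n)}}{24n-1}\Bigl(1-\tfrac1{\mu(n)}\Bigr)-\frac{\sqrt{12}\,e^{\mu(n)}}{24n-1}\frac1{\mu(n)^{2m}}<p(n)<\frac{\sqrt{12}\,e^{\mu(n)}}{24n-1}\Bigl(1-\tfrac1{\mu(n)}\Bigr)+\frac{\sqrt{12}\,e^{\mu(n)}}{24n-1}\frac1{\mu(n)^{2m}}.
\end{equation*}
By Lemma \ref{newlemma2} the middle term equals $\frac{e^{\pi\sqrt{2n/3}}}{4n\sqrt3}\,G(n)$, and by Lemma \ref{errorlem5} (valid because $n>n_0(2m)$) the error term lies between $\frac{e^{\pi\sqrt{2n/3}}}{4n\sqrt3}\widehat L_2(2m)/\sqrt n^{2m}$ and $\frac{e^{\pi\sqrt{2n/3}}}{4n\sqrt3}\widehat U_2(2m)/\sqrt n^{2m}$. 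Thus everything reduces to controlling the tail of the power series $G(n)$.

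Next I would write $G(n)=\sum_{t=0}^{2m-1}g(t)\bigl(\tfrac1{\sqrt n}\bigr)^t+R_m(n)$ with $R_m(n)=\sum_{t\geq 2m}g(t)\bigl(\tfrac1{\sqrt n}\bigr)^t$, and split $R_m(n)$ by parity. Using $g(2j)=g_{e,1}(j)+g_{e,2}(j)$ and $g(2j+1)=g_{o,1}(j)+g_{o,2}(j)$ from Definition \ref{newdef10},
\begin{equation*}
R_m(n)=\sum_{j\geq m}\bigl(g_{e,1}(j)+g_{e,2}(j)\bigr)\Bigl(\tfrac1{\sqrt n}\Bigr)^{2j}+\sum_{j\geq m}\bigl(g_{o,1}(j)+g_{o,2}(j)\bigr)\Bigl(\tfrac1{\sqrt n}\Bigr)^{2j+1}.
\end{equation*}
Applying Lemmas \ref{errorsum1}--\ref{errorsum4} with running index $m$ to the four resulting series gives
\begin{equation*}
\frac{L_1(m)+L_2(m)}{\sqrt n^{2m}}+\frac{L_3(m)+L_4(m)}{\sqrt n^{2m+1}}<R_m(n)<\frac{U_1(m)+U_2(m)}{\sqrt n^{2m}}+\frac{U_3(m)+U_4(m)}{\sqrt n^{2m+1}}.
\end{equation*}

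Assembling the three inputs, the left inequality of the lemma becomes, after cancelling $\frac{e^{\pi\sqrt{2n/3}}}{4n\sqrt3}$ and multiplying through by $n^m$, the purely elementary claim
\begin{equation*}
L_1(m)+L_2(m)+\frac{L_3(m)+L_4(m)}{\sqrt n}-\widehat U_2(2m)\ \geq\ L(2m),
\end{equation*}
and the right inequality the analogous claim with each $L_i$ replaced by $U_i$, $-\widehat U_2(2m)$ by $+\widehat U_2(2m)$, and $L(2m)$ by $U(2m)$. To finish, one substitutes the closed forms from Definitions \ref{newdef11}--\ref{newdef14}, \ref{newdef15} and \ref{newdef18}, uses $n>n_0(2m)=\tfrac{m+1}{12}$ to replace the $n$‑dependent factor $1+\tfrac{2m}{3n}$ of $\widehat U_2(2m)$ by an absolute constant, uses $\tfrac1{\sqrt n}<1$ together with the sign of $L_3(m)+L_4(m)$ (resp.\ $U_3(m)+U_4(m)$) to fold the $\sqrt n^{2m+1}$‑order terms into the $n^{-m}$‑order terms, and replaces $\cosh\alpha,\sinh\alpha$ by two‑digit numerical bounds.

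I expect this last step to be the crux. The quantities $L_i(m)$, $\widehat U_2(2m)$ and $L(2m)$ decay at the three incommensurate rates $24^{-m}$, $(\alpha\sqrt{24})^{-2m}$ and $\sqrt{m+1}\cdot 24^{-m}$, so one must show that for every $m\geq 1$ the slowest‑decaying contributions (coming from $\widehat U_2(2m)$ and from the defining constants $\gamma_0,\gamma_1$ of $L,U$) dominate with the correct sign. I would handle this by a short explicit numerical check for the small values of $m$ followed by an elementary monotonicity argument in $m$ for the tail of cases; the book‑keeping is routine but lengthy, which is exactly why the constants $\gamma_0(w),\gamma_1(w)$ were fixed once and for all in Definition \ref{newdef17} and the bounds $L(w),U(w)$ in Definition \ref{newdef18}.
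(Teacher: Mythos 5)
Your skeleton coincides step for step with the paper's own proof of this lemma: Theorem \ref{bprz1} with $k=2m$, Lemma \ref{newlemma2} for the main term, the parity split of the tail followed by Lemmas \ref{errorsum1}--\ref{errorsum4} at index $m$, Lemma \ref{errorlem5} at $k=2m$, and the signs of $L_3(m)+L_4(m)$ and $U_3(m)+U_4(m)$ to fold the $\sqrt{n}^{-(2m+1)}$ terms into the $n^{-m}$ terms. Your explicit handling of the excluded pair $(n,k)=(6,2)$ for $m=1$ is a point where you are more careful than the paper, which invokes Theorem \ref{bprz1} without comment even though $\widehat g(2)<6$.

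The genuine gap is precisely the step you set aside as ``routine but lengthy.'' The reduced claim $\sum_{i=1}^{4}U_i(m)+\widehat U_2(2m)\le U(2m)$ (and its lower analogue with the $L_i$) is not a finite check plus a monotonicity argument: it is \emph{false} for every $m\ge 3$. The reason is exactly the incommensurability of rates you flag, but the slowest-decaying term enters with the wrong sign. Indeed $\widehat U_2(2m)\cdot\sqrt{24}^{\,2m}=\alpha^{-2m}\bigl(1+\tfrac{2m}{3n}\bigr)\ge (36/\pi^2)^m\approx 3.65^m$, which is independent of $n$ and grows geometrically, while $U(2m)\cdot\sqrt{24}^{\,2m}=24\sqrt{m+1}$ grows only like $\sqrt{m}$; already at $m=3$ one has $\alpha^{-6}\approx 48.6>48=24\sqrt{4}$, and since $\sum_iU_i(m)>0$ the upper inequality fails (a short computation shows the lower one fails at $m=3$ as well). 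The hypothesis $n>\widehat g(2m)$ cannot repair this, because the offending factor $\alpha^{-2m}$ does not involve $n$. You are in good company --- the paper's own intermediate estimate bounding $\sum_{i=1}^4U_i(m)+\widehat U_2(2m)$ silently replaces $\alpha^{-2m}(1+\tfrac{2m}{3n})$ by $4+\tfrac{2m}{3\alpha^2n}$, which, since $\alpha=\pi/6<1$, is only valid for $m=1$ --- but that does not make your deferred verification completable. To close the argument along this route one needs either a sharper input than the $\mu(n)^{-2m}$ error term of \eqref{bprzeqn1} (for instance Lehmer's exponentially small remainder for the Rademacher series), or error constants $L(2m),U(2m)$ enlarged by a factor of order $(36/\pi^2)^m$ rather than the fixed $\gamma_0,\gamma_1$ of Definition \ref{newdef18}.
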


\begin{proof} Recalling Definition \ref{newdef10}, from Lemma \ref{newlemma2}, we have
 	\begin{eqnarray}\label{mainthmseceqn3}
	\sum_{t=0}^{\infty}g(t)\Bigl(\dfrac{1}{\sqrt{n}}\Bigr)^t&=&\sum_{t=0}^{2m-1}g(t)\Bigl(\dfrac{1}{\sqrt{n}}\Bigr)^t+\sum_{t=2m}^{\infty}g(t)\Bigl(\dfrac{1}{\sqrt{n}}\Bigr)^t\nonumber\\
	&=& \sum_{t=0}^{2m-1}g(t)\Bigl(\dfrac{1}{\sqrt{n}}\Bigr)^t+\sum_{t=m}^{\infty}g(2t)\Bigl(\dfrac{1}{\sqrt{n}}\Bigr)^{2t}+\sum_{t=m}^{\infty}g(2t+1)\Bigl(\dfrac{1}{\sqrt{n}}\Bigr)^{2t+1}\nonumber\\
	&=& \sum_{t=0}^{2m-1}g(t)\Bigl(\dfrac{1}{\sqrt{n}}\Bigr)^t+\sum_{t=m}^{\infty}(g_{e,1}(t)+g_{e,2}(t))\Bigl(\dfrac{1}{\sqrt{n}}\Bigr)^{2t}+\sum_{t=m}^{\infty}(g_{o,1}(t)+g_{o,2}(t))\Bigl(\dfrac{1}{\sqrt{n}}\Bigr)^{2t+1}.\nonumber\\
	\end{eqnarray}
	Using Lemmas \ref{errorsum1}-\ref{errorsum4} by assigning $k \mapsto m$, it follows that
	\begin{eqnarray}\label{mainthmseceqn4}
	\dfrac{L_1(m)+L_2(m)}{\sqrt{n}^{2m}}+\dfrac{L_3(m)+L_4(m)}{\sqrt{n}^{2m+1}}&<&\sum_{t=2m}^{\infty}g(t)\Bigl(\dfrac{1}{\sqrt{n}}\Bigr)^t<\dfrac{U_1(m)+U_2(m)}{\sqrt{n}^{2m}}+\dfrac{U_3(m)+U_4(m)}{\sqrt
		{n}^{2m+1}}.\nonumber\\
	\end{eqnarray}	
	Moreover, by Lemma \ref{errorlem5} with $k=2m$, it follows that
	\begin{equation}\label{mainthmseceqn5}
	\dfrac{\sqrt{12}\ e^{\mu(n)}}{24n-1}\frac{1}{\mu(n)^{2m}}<\frac{e^{\pi\sqrt{2n/3}}}{4n\sqrt{3}}\frac{\widehat{U}_2(2m)}{\sqrt{n}^{2m}}.
	\end{equation}
	Finally, from \eqref{mainthmseceqn4} and \eqref{mainthmseceqn5} along with the fact that $U_3(m)+U_4(m)>0$, we obtain
	\begin{equation}\label{finalestim1}
	\begin{split}
	\dfrac{\sqrt{12}e^{\mu(n)}}{24n-1}\Biggl(1-\dfrac{1}{\mu(n)}+\dfrac{1}{\mu(n)^{2m}}\Biggr) &< \frac{e^{\pi\sqrt{2n/3}}}{4n\sqrt{3}} \Biggl(\sum_{t=0}^{2m-1}g(t)\Bigl(\dfrac{1}{\sqrt{n}}\Bigr)^t+\dfrac{\sum_{i=1}^{4}U_i(m)+\widehat{U}_2(2m)}{\sqrt{n}^{2m}}\Biggr).
	\end{split}
	\end{equation}
	Since for all $m \geq 1$, $L_3(m)+L_4(m)<0$, it follows that
	\begin{equation}\label{finalestim2}
	\begin{split}
	\dfrac{\sqrt{12}e^{\mu(n)}}{24n-1}\Biggl(1-\dfrac{1}{\mu(n)}-\dfrac{1}{\mu(n)^{2m}}\Biggr) &> \frac{e^{\pi\sqrt{2n/3}}}{4n\sqrt{3}} \Biggl(\sum_{t=0}^{2m-1}g(t)\Bigl(\dfrac{1}{\sqrt{n}}\Bigr)^t+\dfrac{\sum_{i=1}^{4}L_i(m)-\widehat{U}_2(2m)}{\sqrt{n}^{2m}}\Biggr).
	\end{split}
	\end{equation}
	From Lemmas \ref{errorsum1}-\ref{errorsum4} and \ref{errorlem5}, for all $n \geq \max \{1,n_0(2m)\}$, 
	\begin{eqnarray}\label{evencaseupperbound1}
	\sum_{i=1}^{4}U_i(m)+\widehat{U}_2(2m)
	&<& \Biggl(4+\dfrac{4}{\sqrt{m+1}}+\dfrac{2}{(m+1)^{3/2}}+6\sqrt{m+1}+\dfrac{2m}{3\alpha^2 n}\Biggr)\dfrac{1}{\sqrt{24}^{2m}}\nonumber.
	\end{eqnarray}
	 For all $1\leq m \leq 10$ observe that $n_0(2m)<1$ and therefore, $\dfrac{2m}{3\alpha^2 n}<\dfrac{20}{3\alpha^2}<25$; whereas for $m \geq 11$, $n_0(2m)>1$. Consequently, $\dfrac{2m}{3\alpha^2 n}<\dfrac{8m}{\alpha^2(m+1)}<\dfrac{8}{\alpha^2}<10$; i.e., $\dfrac{2m}{3\alpha^2 n}<25$.\\
	 Continuing our estimation
	\begin{eqnarray}\label{evencaseupperbound}
	\sum_{i=1}^{4}U_i(m)+\widehat{U}_2(2m)
	&<& \Biggl(29+\dfrac{4}{\sqrt{m+1}}+\dfrac{2}{(m+1)^{3/2}}+6\sqrt{m+1}\Biggr)\dfrac{1}{\sqrt{24}^{2m}}\nonumber\\
	& \leq & \dfrac{24 \sqrt{m+1}}{\sqrt{24}^{2m}}=U(2m).
	\end{eqnarray}
	Similarly, for all $n \geq \max \{1,n_0(2m)\}$, 
	\begin{eqnarray}\label{evencaselowerbound}
	\sum_{i=1}^{4}L_i(m)-\widehat{U}_2(2m)&>& \Biggl(-29-\dfrac{4}{\sqrt{m+1}}-\dfrac{1}{2(m+1)^{3/2}}-3\sqrt{m+1}\Biggr)\dfrac{1}{\sqrt{24}^{2m}} \nonumber\\
	&\geq & -\dfrac{23\sqrt{m+1}}{\sqrt{24}^{2m}}=L(2m).
	\end{eqnarray}
Plugging \eqref{evencaseupperbound} and \eqref{evencaselowerbound} into \eqref{finalestim1} and \eqref{finalestim2}, respectively,  and applying Theorem \ref{bprz1}, we get
\begin{equation}\label{finalestimevenupper}
p(n)<\dfrac{\sqrt{12}e^{\mu(n)}}{24n-1}\Biggl(1-\dfrac{1}{\mu(n)}+\dfrac{1}{\mu(n)^{2m}}\Biggr) < \frac{e^{\pi\sqrt{2n/3}}}{4n\sqrt{3}} \Biggl(\sum_{t=0}^{2m-1}g(t)\Bigl(\dfrac{1}{\sqrt{n}}\Bigr)^t+\dfrac{U(2m)}{\sqrt{n}^{2m}}\Biggr)
\end{equation}
and 
\begin{equation}\label{finalestimevenlower}
p(n)>\dfrac{\sqrt{12}e^{\mu(n)}}{24n-1}\Biggl(1-\dfrac{1}{\mu(n)}-\dfrac{1}{\mu(n)^{2m}}\Biggr) > \frac{e^{\pi\sqrt{2n/3}}}{4n\sqrt{3}} \Biggl(\sum_{t=0}^{2m-1}g(t)\Bigl(\dfrac{1}{\sqrt{n}}\Bigr)^t+\dfrac{L(2m)}{\sqrt{n}^{2m}}\Biggr).
\end{equation}
\end{proof}

\begin{lemma}\label{mainlemmaoddcase}
	Let $\widehat{g}(k)$ be as in Theorem \ref{bprz1} and $n_0(k)$ as in Definition \ref{newdef16}. Let $g(t)$ be as in Equation \eqref{g(t)definition}. Let $L(w)$ and $U(w)$ be as in Definition \ref{newdef18}. If $m \in \mathbb{Z}_{\geq 0}$ and $n>\max \bigl\{1, n_0(2m+1), \widehat{g}(2m+1)\bigr\}$, then
	\begin{equation*}
	\frac{e^{\pi\sqrt{2n/3}}}{4n\sqrt{3}} \Biggl(\sum_{t=0}^{2m}g(t)\Bigl(\dfrac{1}{\sqrt{n}}\Bigr)^t+\dfrac{L(2m+1)}{\sqrt{n}^{2m+1}}\Biggr)< p(n) < \frac{e^{\pi\sqrt{2n/3}}}{4n\sqrt{3}} \Biggl(\sum_{t=0}^{2m}g(t)\Bigl(\dfrac{1}{\sqrt{n}}\Bigr)^t+\dfrac{U(2m+1)}{\sqrt{n}^{2m+1}}\Biggr).
	\end{equation*}
\end{lemma}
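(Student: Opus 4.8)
The plan is to run the argument of Lemma~\ref{mainlemmaevencase} almost verbatim, with the truncation index shifted from $2m$ to $2m+1$. First I would combine Lemma~\ref{newlemma2} with Definition~\ref{newdef10} and split
\begin{equation*}
\sum_{t=0}^{\infty}g(t)\Bigl(\tfrac{1}{\sqrt{n}}\Bigr)^{t}
=\sum_{t=0}^{2m}g(t)\Bigl(\tfrac{1}{\sqrt{n}}\Bigr)^{t}
+\sum_{t=m+1}^{\infty}g(2t)\Bigl(\tfrac{1}{\sqrt{n}}\Bigr)^{2t}
+\sum_{t=m}^{\infty}g(2t+1)\Bigl(\tfrac{1}{\sqrt{n}}\Bigr)^{2t+1},
\end{equation*}
where the even-index tail now starts at $t=m+1$ (since $2(m+1)>2m+1$) while the odd-index tail still starts at $t=m$. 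Writing $g(2t)=g_{e,1}(t)+g_{e,2}(t)$ and $g(2t+1)=g_{o,1}(t)+g_{o,2}(t)$, I would apply Lemmas~\ref{errorsum1} and~\ref{errorsum2} with $k\mapsto m+1$ and Lemmas~\ref{errorsum3} and~\ref{errorsum4} with $k\mapsto m$ to get
\begin{equation*}
\frac{L_1(m{+}1){+}L_2(m{+}1)}{\sqrt{n}^{2m+2}}+\frac{L_3(m){+}L_4(m)}{\sqrt{n}^{2m+1}}
<\sum_{t=2m+1}^{\infty}g(t)\Bigl(\tfrac{1}{\sqrt{n}}\Bigr)^{t}
<\frac{U_1(m{+}1){+}U_2(m{+}1)}{\sqrt{n}^{2m+2}}+\frac{U_3(m){+}U_4(m)}{\sqrt{n}^{2m+1}}.
\end{equation*}

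Next I would invoke Lemma~\ref{errorlem5} with $k=2m+1$ (which needs $n>n_0(2m+1)$) to sandwich $\dfrac{\sqrt{12}\,e^{\mu(n)}}{24n-1}\dfrac{1}{\mu(n)^{2m+1}}$ between $\dfrac{e^{\pi\sqrt{2n/3}}}{4n\sqrt{3}}\dfrac{\widehat{L}_2(2m+1)}{\sqrt{n}^{2m+1}}$ and $\dfrac{e^{\pi\sqrt{2n/3}}}{4n\sqrt{3}}\dfrac{\widehat{U}_2(2m+1)}{\sqrt{n}^{2m+1}}$, and then feed the resulting two-sided estimate for $\dfrac{\sqrt{12}\,e^{\mu(n)}}{24n-1}\bigl(1-\tfrac{1}{\mu(n)}\pm\tfrac{1}{\mu(n)^{2m+1}}\bigr)$ into Theorem~\ref{bprz1}, applied with exponent $2m+1$. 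This is legitimate for $m\ge 1$ (then $2m+1\ge 3$ and $(n,2m+1)\ne(6,2)$ automatically); the case $m=0$ needs minor separate handling (Theorem~\ref{bprz1} with exponent $2$, whose $\mu(n)^{-2}$-term has order $\sqrt{n}^{-2}$, one power below the $\sqrt{n}^{-1}$ remainder being controlled, and the $g_{o,1}$- and $g_{o,2}$-tails at $k=0$ estimated by hand). Using Lemma~\ref{newlemma2} to identify $\tfrac{\sqrt{12}\,e^{\mu(n)}}{24n-1}(1-\tfrac{1}{\mu(n)})$ with $\tfrac{e^{\pi\sqrt{2n/3}}}{4n\sqrt{3}}\sum_{t\ge 0}g(t)\sqrt{n}^{-t}$, transferring $\sum_{t=0}^{2m}g(t)\sqrt{n}^{-t}$ to the left, and noting the signs of the combinations $U_1(m{+}1)+U_2(m{+}1)$ and $L_1(m{+}1)+L_2(m{+}1)$ (so that their $\sqrt{n}^{-(2m+2)}$-pieces may be absorbed, for $n\ge 1$, into $\sqrt{n}^{-(2m+1)}$-pieces), I arrive at
\begin{equation*}
p(n)<\frac{e^{\pi\sqrt{2n/3}}}{4n\sqrt{3}}\Biggl(\sum_{t=0}^{2m}g(t)\Bigl(\tfrac{1}{\sqrt{n}}\Bigr)^{t}+\frac{U_1(m{+}1)+U_2(m{+}1)+U_3(m)+U_4(m)+\widehat{U}_2(2m+1)}{\sqrt{n}^{2m+1}}\Biggr)
\end{equation*}
and the mirror lower bound with $L_1,L_2,L_3,L_4$ in place of $U_1,U_2,U_3,U_4$ and $-\widehat{U}_2(2m+1)$ in place of $+\widehat{U}_2(2m+1)$.

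The last and only substantial step is the numerical reduction: for $n>\max\{1,n_0(2m+1)\}$ one must show that the coefficient of $\sqrt{n}^{-(2m+1)}$ in the upper bound is $\le U(2m+1)=17\sqrt{m+2}/\sqrt{24}^{2m+1}$ and the one in the lower bound is $\ge L(2m+1)=-15\sqrt{m+2}/\sqrt{24}^{2m+1}$ (here $\lceil(2m+1)/2\rceil=m+1$, and $2m+1$ is odd so $\gamma_0=15$, $\gamma_1=17$). For this I would substitute the explicit formulas of Definitions~\ref{newdef11}--\ref{newdef14} and~\ref{newdef15}, factor out $1/\sqrt{24}^{2m+1}$, bound each of $\cosh\alpha$, $\sinh\alpha$, $\alpha$ (with $\alpha=\pi/6$) by a crude rational constant, merge the $\sqrt{m+1}$- and $(m+1)^{-3/2}$-terms into $\sqrt{m+2}$-terms via $m+1\le m+2$, and --- exactly as in the proof of Lemma~\ref{mainlemmaevencase} --- split off the finitely many small $m$ for which $n_0(2m+1)<1$ (so that only $n\ge 1$ is guaranteed) from the range where $n\ge n_0(2m+1)$ already forces $\tfrac{2m+1}{3n}<\tfrac{8(2m+1)}{2m+3}<8$, which keeps the $\widehat{U}_2(2m+1)$-contribution under control. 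Plugging the resulting clean constants back into the two displays and invoking Theorem~\ref{bprz1} one last time yields the stated inequalities.

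The main obstacle is precisely this final, purely computational, verification: that the odd-case constants $\gamma_0(2m+1)=15$ and $\gamma_1(2m+1)=17$ are large enough to absorb the combined contributions of $U_1(m{+}1),\dots,U_4(m)$ and $\widehat{U}_2(2m+1)$ (respectively of $L_1(m{+}1),\dots,L_4(m)$ minus $\widehat{U}_2(2m+1)$) uniformly in $m\ge 1$, together with the isolated treatment of $m=0$. Conceptually nothing new is required beyond Lemma~\ref{mainlemmaevencase}; only the index shift $m\mapsto m+1$ on the even-index error sums and the parity bookkeeping differ.
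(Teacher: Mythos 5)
Your proposal follows the paper's proof essentially verbatim: the same parity split of the tail (even-index part from $t=m+1$, odd-index part from $t=m$), the same applications of Lemmas~\ref{errorsum1}--\ref{errorsum2} at $k=m+1$ and Lemmas~\ref{errorsum3}--\ref{errorsum4} at $k=m$, the same use of Lemma~\ref{errorlem5} with $k=2m+1$, the same sign-based absorption of the $\sqrt{n}^{-(2m+2)}$ pieces, and the same final numerical reduction to $\gamma_0=15$, $\gamma_1=17$ before invoking Theorem~\ref{bprz1}. You are in fact slightly more careful than the paper in flagging that $m=0$ (where $2m+1=1$ falls outside the hypothesis $k\ge 2$ of Theorem~\ref{bprz1} and outside the range $k\ge 1$ of Lemmas~\ref{errorsum3}--\ref{errorsum4}) needs separate treatment, a point the paper's proof passes over silently.
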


\begin{proof} Recalling Definition \ref{newdef10}, by Lemma \ref{newlemma2} we have
		\begin{eqnarray}\label{mainthmseceqn6}
	\sum_{t=0}^{\infty}g(t)\Bigl(\dfrac{1}{\sqrt{n}}\Bigr)^t&=&\sum_{t=0}^{2m}g(t)\Bigl(\dfrac{1}{\sqrt{n}}\Bigr)^t+\sum_{t=2m+1}^{\infty}g(t)\Bigl(\dfrac{1}{\sqrt{n}}\Bigr)^t\nonumber\\
	&=& \sum_{t=0}^{2m}g(t)\Bigl(\dfrac{1}{\sqrt{n}}\Bigr)^t+\sum_{t=m}^{\infty}g(2t+1)\Bigl(\dfrac{1}{\sqrt{n}}\Bigr)^{2t+1}+\sum_{t=m+1}^{\infty}g(2t)\Bigl(\dfrac{1}{\sqrt{n}}\Bigr)^{2t}\nonumber\\
	&=& \sum_{t=0}^{2m}g(t)\Bigl(\dfrac{1}{\sqrt{n}}\Bigr)^t+\sum_{t=m}^{\infty}(g_{o,1}(t)+g_{o,2}(t))\Bigl(\dfrac{1}{\sqrt{n}}\Bigr)^{2t+1}+\sum_{t=m+1}^{\infty}(g_{e,1}(t)+g_{e,2}(t))\Bigl(\dfrac{1}{\sqrt{n}}\Bigr)^{2t}.\nonumber\\
	\end{eqnarray}
	Using Lemmas \ref{errorsum1}-\ref{errorsum2} by substituting $k \mapsto m+1$ and Lemmas \ref{errorsum3}-\ref{errorsum4} by substituting $k \mapsto m$, it follows that
	\begin{eqnarray}\label{mainthmseceqn7}
	\dfrac{L_1(m+1)+L_2(m+1)}{\sqrt{n}^{2m+2}}+\dfrac{L_3(m)+L_4(m)}{\sqrt{n}^{2m+1}}&<&\sum_{t=2m+1}^{\infty}g(t)\Bigl(\dfrac{1}{\sqrt{n}}\Bigr)^t\nonumber\\
	&<&\dfrac{U_1(m+1)+U_2(m+1)}{\sqrt{n}^{2m+2}}+\dfrac{U_3(m)+U_4(m)}{\sqrt
		{n}^{2m+1}}.\nonumber\\
	\end{eqnarray}
	By Lemma \ref{errorlem5} with $k=2m+1$, 
	\begin{equation}\label{mainthmseceqn8}
	\dfrac{\sqrt{12}\ e^{\mu(n)}}{24n-1}\frac{1}{\mu(n)^{2m+1}}<\frac{e^{\pi\sqrt{2n/3}}}{4n\sqrt{3}}\frac{\widehat{U}_2(2m+1)}{\sqrt{n}^{2m+1}}.
	\end{equation}
	From \eqref{mainthmseceqn7} and \eqref{mainthmseceqn8} along with the fact that $U_1(m)+U_2(m)>0$, we obtain
	\begin{equation}\label{finalestim3}
	\begin{split}
	\dfrac{\sqrt{12}e^{\mu(n)}}{24n-1}\Biggl(1-\dfrac{1}{\mu(n)}+\dfrac{1}{\mu(n)^{2m+1}}\Biggr)
	&<\frac{e^{\pi\sqrt{2n/3}}}{4n\sqrt{3}} \Biggl(\sum_{t=0}^{2m}g(t)\Bigl(\dfrac{1}{\sqrt{n}}\Bigr)^t+\dfrac{\widehat{U}(2m+1)}{\sqrt{n}^{2m+1}}\Biggr)
	\end{split},
	\end{equation}
	where $$\widehat{U}(2m+1)=U_1(m+1)+U_2(m+1)+U_3(m)+U_4(m)+\widehat{U}_2(2m+1).$$
	Since for all $m \geq 0$, $L_1(m)+L_2(m)<0$, it follows that
	\begin{equation}\label{finalestim4}
	\begin{split}
	\dfrac{\sqrt{12}e^{\mu(n)}}{24n-1}\Biggl(1-\dfrac{1}{\mu(n)}-\dfrac{1}{\mu(n)^{2m+1}}\Biggr) 
	&>\frac{e^{\pi\sqrt{2n/3}}}{4n\sqrt{3}} \Biggl(\sum_{t=0}^{2m}g(t)\Bigl(\dfrac{1}{\sqrt{n}}\Bigr)^t+\dfrac{\widehat{L}(2m+1)}{\sqrt{n}^{2m+1}}\Biggr)
	\end{split}
	\end{equation}
	with $$\widehat{L}(2m+1)=L_1(m+1)+L_2(m+1)+L_3(m)+L_4(m)-\widehat{U}_2(2m+1).$$
	Next, we estimate $\widehat{U}_2(2m+1)$. Recall from Lemma \ref{errorlem5} that for all $n > n_0(2m+1)$,
	\begin{equation*}
	\widehat{U}_2(2m+1) < \dfrac{1}{\alpha^{2m+1}}\Bigl(1+\dfrac{2m+1}{3n}\Bigr) \dfrac{1}{\sqrt{24}^{2m+1}} < \dfrac{1}{\alpha}\Bigl(1+\dfrac{2m+1}{3n}\Bigr) \dfrac{1}{\sqrt{24}^{2m+1}}.
	\end{equation*}
	We note that for $0 \leq m \leq 10$; $n\geq 1>n_0(2m+1)$, and therefore, $\dfrac{1}{\alpha}\Bigl(1+\dfrac{2m+1}{3n}\Bigr)< \dfrac{8}{\alpha}$; whereas for $m \geq 11$, $n > \dfrac{2m+3}{24}$. This implies that $\dfrac{1}{\alpha}\Bigl(1+\dfrac{2m+1}{3n}\Bigr)< \dfrac{9}{\alpha}$. Hence, for all $n \geq \max \{1, n_0(2m+1)\}$,
	\begin{equation}\label{oddcasebound}
	\widehat{U}_2(2m+1) < \dfrac{9}{\alpha}\dfrac{1}{\sqrt{24}^{2m+1}}.
	\end{equation}
	From Lemmas \ref{errorsum1}-\ref{errorsum4} and \ref{errorlem5}, for all $n \geq \max \{1,n_0(2m+1)\}$, we get
	\begin{eqnarray}\label{oddcaseupperbound}
	\widehat{U}(2m+1)&<& \Biggl(18+\dfrac{5}{\sqrt{m+1}}+\dfrac{1}{(m+1)^{3/2}}+2\sqrt{m+2}\Biggr)\dfrac{1}{\sqrt{24}^{2m+1}} \nonumber\\
	&\leq & \dfrac{17\sqrt{m+2}}{\sqrt{24}^{2m+1}}=U(2m+1).
	\end{eqnarray}
	Similarly for all $n \geq \max \{1,n_0(2m+1)\}$, it follows that
	\begin{eqnarray}\label{oddcaselowerbound}
	\widehat{L}(2m+1)&>& \Biggl(-17-\dfrac{3}{\sqrt{m+1}}-\dfrac{1}{(m+1)^{3/2}}-13\sqrt{m+2}\Biggr)\dfrac{1}{\sqrt{24}^{2m+1}} \nonumber\\
	&\geq & -\dfrac{15\sqrt{m+2}}{\sqrt{24}^{2m+1}}=L(2m+1).
	\end{eqnarray}
	
Plugging \eqref{oddcaseupperbound} and \eqref{oddcaselowerbound} into \eqref{finalestim3} and \eqref{finalestim4}, respectively, and applying Theorem \ref{bprz1}, we get
	\begin{equation}\label{finalestimoddupper}
p(n)<	\dfrac{\sqrt{12}e^{\mu(n)}}{24n-1}\Biggl(1-\dfrac{1}{\mu(n)}+\dfrac{1}{\mu(n)^{2m+1}}\Biggr)
	<\frac{e^{\pi\sqrt{2n/3}}}{4n\sqrt{3}} \Biggl(\sum_{t=0}^{2m}g(t)\Bigl(\dfrac{1}{\sqrt{n}}\Bigr)^t+\dfrac{U(2m+1)}{\sqrt{n}^{2m+1}}\Biggr)
	\end{equation}
	and 
	\begin{equation}\label{finalestimoddlower}
p(n)>	\dfrac{\sqrt{12}e^{\mu(n)}}{24n-1}\Biggl(1-\dfrac{1}{\mu(n)}-\dfrac{1}{\mu(n)^{2m+1}}\Biggr)
	>\frac{e^{\pi\sqrt{2n/3}}}{4n\sqrt{3}} \Biggl(\sum_{t=0}^{2m}g(t)\Bigl(\dfrac{1}{\sqrt{n}}\Bigr)^t+\dfrac{L(2m+1)}{\sqrt{n}^{2m+1}}\Biggr).
	\end{equation}
\end{proof}

\begin{theorem}\label{Mainthm}
Let $\widehat{g}(k)$ be as in Theorem \ref{bprz1} and $g(t)$ as in \eqref{g(t)definition}. Let $L(w)$ and $U(w)$ be as in Definition \ref{newdef18}. If $w \in \mathbb{Z}_{\geq 1}$ with $\lceil w/2\rceil \geq 1$ and $n> \widehat{g}(w)$, then
\begin{equation}\label{Mainthmeqn}
\frac{e^{\pi\sqrt{2n/3}}}{4n\sqrt{3}} \Biggl(\sum_{t=0}^{w-1}g(t)\Bigl(\dfrac{1}{\sqrt{n}}\Bigr)^t+\dfrac{L(w)}{\sqrt{n}^{w}}\Biggr)< p(n) < \frac{e^{\pi\sqrt{2n/3}}}{4n\sqrt{3}} \Biggl(\sum_{t=0}^{w-1}g(t)\Bigl(\dfrac{1}{\sqrt{n}}\Bigr)^t+\dfrac{U(w)}{\sqrt{n}^{w}}\Biggr).
\end{equation}	
\end{theorem}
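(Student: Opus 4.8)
The plan is to obtain Theorem \ref{Mainthm} as a direct consolidation of Lemmas \ref{mainlemmaevencase} and \ref{mainlemmaoddcase}, which already carry all of the analytic content. First I would split on the parity of $w$: if $w=2m$ then $m=\lceil w/2\rceil\ge 1$, so Lemma \ref{mainlemmaevencase} yields \eqref{Mainthmeqn} verbatim for every $n>\max\{1,n_0(2m),\widehat g(2m)\}$; if $w=2m+1$ then $m=(w-1)/2\ge 0$ and $\lceil w/2\rceil=m+1\ge 1$, so Lemma \ref{mainlemmaoddcase} yields \eqref{Mainthmeqn} verbatim for every $n>\max\{1,n_0(2m+1),\widehat g(2m+1)\}$. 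Thus the whole argument reduces to checking that the hypothesis $n>\widehat g(w)$ stated in the theorem is at least as strong as the hypotheses of the two lemmas, i.e.\ that $\widehat g(w)\ge\max\{1,n_0(w)\}$.

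Next I would verify this inequality from the explicit formulas. Recall from Theorem \ref{bprz1} that $\widehat g(w)=\frac{1}{24}\bigl(\frac{36}{\pi^2}\nu(w)^2+1\bigr)$ with $\nu(w)=2\log 6+(2\log 2)w+2w\log w+2w\log\log w+\frac{5w\log\log w}{\log w}$, and from Definition \ref{newdef16} that $n_0(w)=(w+2)/24$. Then $\widehat g(w)\ge n_0(w)$ is equivalent to $\frac{36}{\pi^2}\nu(w)^2\ge w+1$, and $\widehat g(w)\ge 1$ is equivalent to $\nu(w)^2\ge 23\pi^2/36$. Since $\nu(w)\ge 2w\log w$ for all but the smallest $w$ (the remaining summands being eventually positive), one has $\frac{36}{\pi^2}\nu(w)^2\ge\frac{144}{\pi^2}w^2(\log w)^2$, and the right-hand side exceeds both $w+1$ and $23\pi^2/36$ for all $w$ past a small explicit threshold. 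The finitely many remaining small values of $w$ — and, should $\widehat g(w)<\max\{1,n_0(w)\}$ occur there, the correspondingly few values of $n$ then not covered by the parity lemma — I would settle by direct substitution into \eqref{Mainthmeqn}, using $g(t)=\omega_t$ from \eqref{g(t)definition} together with the closed forms of $L(w)$ and $U(w)$ from Definition \ref{newdef18}.

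The hard part is essentially nonexistent at this final stage: the genuine difficulty, namely extracting the explicit error bounds $L(w)$ and $U(w)$, has already been dispatched in Lemmas \ref{mainlemmaevencase}--\ref{mainlemmaoddcase}, which themselves rest on the bounds for $g_{e,1},g_{e,2},g_{o,1},g_{o,2}$ in Section \ref{sec7} and, underneath those, on the \texttt{Sigma}-assisted evaluations of $S_1(t),\dots,S_4(t)$ in Section \ref{sect3}. So the only point requiring care is the threshold bookkeeping of the previous paragraph, in particular making sure that no pair $(w,n)$ with $n>\widehat g(w)$ falls outside the scope of the parity lemma it is assigned to; everything else is a clean repackaging step.
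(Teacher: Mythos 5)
Your proposal is correct and follows essentially the same route as the paper: the paper's proof of Theorem \ref{Mainthm} is exactly the one-line consolidation of Lemmas \ref{mainlemmaevencase} and \ref{mainlemmaoddcase} via the parity split, invoking the fact that $\widehat{g}(k)>\max\{n_0(k),1\}$ so that the hypothesis $n>\widehat{g}(w)$ subsumes those of the two lemmas. Your additional sketch of why $\widehat{g}(w)\ge\max\{1,n_0(w)\}$ holds is more detail than the paper supplies (it simply asserts the inequality), but it is the same argument.
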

\begin{proof}
Combining Lemmas \ref{mainlemmaevencase} and \ref{mainlemmaoddcase} together with the fact that $\widehat{g}(k)>\max \bigl\{n_0(k),1\bigr\}$, we arrive at \eqref{Mainthmeqn}.	
\end{proof}

\begin{corollary}\label{cor1}
	For all $n \geq 116$, we have
	\begin{equation}\label{cor1eqn}
		\dfrac{e^{\pi\sqrt{2n/3}}}{4n\sqrt{3}}\Biggl(\sum_{t=0}^{3}\dfrac{g(t)}{\sqrt{n}^t}-\dfrac{1}{14 n^{2}}\Biggr)<p(n)<\dfrac{e^{\pi\sqrt{2n/3}}}{4n\sqrt{3}}\Biggl(\sum_{t=0}^{3}\dfrac{g(t)}{\sqrt{n}^t}+\dfrac{1}{13 n^{2}}\Biggr),
	\end{equation}
where 
\begin{equation*}
g(0)=1,\ g(1)=-\dfrac{\pi^2+72}{24\sqrt{6}\pi},\  g(2)=\dfrac{\pi^2+432}{6912},\  g(3)=-\dfrac{\pi^4+1296\pi^2+93312}{497664\sqrt{6}\pi}.
\end{equation*}
\end{corollary}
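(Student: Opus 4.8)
The plan is to specialize Theorem~\ref{Mainthm} to $w=4$ and then replace the constants $L(4)$ and $U(4)$ by the rounder bounds $-\tfrac{1}{14}$ and $\tfrac{1}{13}$. Since $w=4$ is even we have $\lceil w/2\rceil=2\geq 1$, so Theorem~\ref{Mainthm} applies and yields, for all $n>\widehat{g}(4)$,
\begin{equation*}
\frac{e^{\pi\sqrt{2n/3}}}{4n\sqrt{3}}\Biggl(\sum_{t=0}^{3}\frac{g(t)}{\sqrt{n}^t}+\frac{L(4)}{n^{2}}\Biggr)<p(n)<\frac{e^{\pi\sqrt{2n/3}}}{4n\sqrt{3}}\Biggl(\sum_{t=0}^{3}\frac{g(t)}{\sqrt{n}^t}+\frac{U(4)}{n^{2}}\Biggr).
\end{equation*}
First I would bound $\widehat{g}(4)$ from Theorem~\ref{bprz1}: with $\nu(4)=2\log 6+8\log 2+8\log 4+8\log\log 4+\tfrac{20\log\log 4}{\log 4}$, a direct numerical estimate gives $\nu(4)<27.55$, hence $\widehat{g}(4)=\tfrac{1}{24}\bigl(\tfrac{36}{\pi^2}\nu(4)^2+1\bigr)<116$ (the true value is $\approx 115.4$); consequently every integer $n\geq 116$ satisfies $n>\widehat{g}(4)$, so the displayed two-sided bound holds for such $n$.

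Next I would evaluate the constants. From Definition~\ref{newdef18} with $w=4$ even we have $(\gamma_0(4),\gamma_1(4))=(23,24)$ and $\lceil 4/2\rceil=2$, so $L(4)=-\dfrac{23\sqrt{3}}{\sqrt{24}^{\,4}}=-\dfrac{23\sqrt{3}}{576}$ and $U(4)=\dfrac{24\sqrt{3}}{576}=\dfrac{\sqrt{3}}{24}$. Then I would check the two elementary comparisons $\dfrac{23\sqrt{3}}{576}<\dfrac{1}{14}$ (equivalently $322\sqrt{3}<576$, i.e.\ $3\cdot 322^2=311052<331776=576^2$) and $\dfrac{\sqrt{3}}{24}<\dfrac{1}{13}$ (equivalently $13\sqrt{3}<24$, i.e.\ $507<576$). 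Since the prefactor $\dfrac{e^{\pi\sqrt{2n/3}}}{4n\sqrt{3}}$ is positive and $-\dfrac{1}{14}<L(4)$ while $U(4)<\dfrac{1}{13}$, the displayed inequalities immediately imply \eqref{cor1eqn}.

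Finally I would make the coefficients explicit by evaluating the closed form \eqref{g(t)definition} at $t=0,1,2,3$: for $t=0$ only the $k=0$ summand survives, giving $g(0)=1$; for $t=1$ the terms $k=0,1$ give $g(1)=\dfrac{1}{-4\sqrt{6}}\bigl(\tfrac{\pi}{6}+\tfrac{12}{\pi}\bigr)=-\dfrac{\pi^2+72}{24\sqrt{6}\,\pi}$; for $t=2$ the terms $k=0,1$ give $g(2)=\dfrac{1}{96}\bigl(\tfrac{\pi^2}{72}+6\bigr)=\dfrac{\pi^2+432}{6912}$; and for $t=3$ the terms $k=0,1,2$ give $g(3)=\dfrac{1}{-384\sqrt{6}}\bigl(\tfrac{\pi^3}{1296}+\pi+\tfrac{72}{\pi}\bigr)=-\dfrac{\pi^4+1296\pi^2+93312}{497664\sqrt{6}\,\pi}$.

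There is essentially no conceptual obstacle here: the argument reduces to one specialization of Theorem~\ref{Mainthm}, two comparisons of algebraic numbers, and four finite sums. The only place requiring a little care is the estimate $\widehat{g}(4)<116$, where the margin between $\widehat{g}(4)\approx 115.4$ and $116$ is small; there one must bound $\log 4$ from below and $\log\log 4$ from above tightly enough, and record $\nu(4)$ to sufficiently many digits, to make the comparison with $116$ rigorous. This also explains why the threshold in the statement is exactly $n\geq 116$.
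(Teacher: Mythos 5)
Your proposal is correct and follows exactly the paper's route: the paper's proof of Corollary \ref{cor1} is the single line ``Plugging $w=4$ into \eqref{Mainthmeqn}, we obtain the inequality \eqref{cor1eqn},'' and you simply make explicit the details it leaves implicit (the check $\widehat{g}(4)\approx 115.35<116$, the comparisons $-\tfrac{1}{14}<L(4)=-\tfrac{23\sqrt{3}}{576}$ and $U(4)=\tfrac{\sqrt{3}}{24}<\tfrac{1}{13}$, and the evaluation of $g(0),\dots,g(3)$ from \eqref{g(t)definition}), all of which I verified to be accurate.
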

\begin{proof}
Plugging $w=4$ into \eqref{Mainthmeqn}, we obtain the inequality \eqref{cor1eqn}.
\end{proof}
\begin{remark}\label{corremark1}
Corollary \ref{cor1} provides an answer to the Question \ref{Chenproblem}, asked by Chen. As a consequence from \eqref{cor1eqn}, one can derive that $p(n)$ is $\log$-concave for all $n \geq 26$.
\end{remark}

\section{Appendix}\label{Appendixp(n)asymp}
\subsection{Proofs of the lemmas presented in Section \ref{sect2}.}\label{proofoflemmas}\hfill\\
\emph{Proof of Lemma \ref{lem1}:}
	For $n=1$ we have to prove
	$$\frac{1-x_1}{1+y_1}\geq 1-x_1-y_1,$$
	which is equivalent to
	$$1-x_1\geq (1+y_1)(1-x_1-y_1)\geq 1-x_1-y_1^2-x_1y_1 \Leftrightarrow 0\geq -y_1^2-x_1y_1.$$
	This is always true because $x_1,y_1$ are non-negative real numbers.
	Now assume by induction that the statement is true for $n=N$. Next we prove the statement for $n=N+1$. 
	For $n=N$, we have $P\geq (1-S)$
	with $$P:=\frac{(1-x_1)(1-x_2)\cdots (1-x_N)}{(1+y_1)(1+y_2)\cdots(1+y_N)}\ \ \text{and}\ \ S:=\sum_{j=1}^Nx_j+\sum_{j=1}^Ny_j.$$
	This implies that
	$$P\frac{1-x_{N+1}}{1+y_{N+1}}\geq (1-S)\frac{1-x_{N+1}}{1+y_{N+1}}.$$
	Therefore it suffices to prove that
	$$(1-S)\frac{1-x_{N+1}}{1+y_{N+1}}\geq 1-S-y_{N+1}-x_{N+1},$$
	which is equivalent to
	$$(1-S)(1-x_{N+1})\geq (1-S-y_{N+1}-x_{N+1})(1+y_{N+1}).$$
	Equivalently,
	$$1-x_{N+1}-S+S\cdot x_{N+1}\geq 1-S-x_{N+1}-y^2_{N+1}-x_{N+1}y_{N+1}-S\cdot y_{N+1},$$
	which amounts to say that
	$$S\cdot x_{N+1}\geq -y^2_{N+1}-x_{N+1}y_{N+1}-S\cdot y_{N+1},$$
	and this inequality holds 
	because $x_{N+1},y_{N+1},S\geq 0$.
\qed 

\emph{Proof of Lemma \ref{lem2}:}
	Expanding the quotient $\frac{(-1)^i(-t)_i}{(t)_i}$ as 
	$$(-1)^i\frac{(-t)_i}{(t)_i}=(-1)^i\prod_{j=1}^i\frac{-t+j-1}{t+j-1}=\prod_{j=1}^i\frac{t-j+1}{t+j-1},$$
	we obtain
	\begin{equation*}
	\frac{t(-t)_u(-1)^u}{(1+2t)(t+u)(t)_u}=\frac{t}{2(t+\frac{1}{2})(t+u)}\prod_{j=1}^u\frac{t-(j-1)}{t+j-1}=\frac{1}{2t(1+\frac{1}{2t})(1+\frac{u}{t})}\prod_{j=1}^u\frac{1-\frac{j-1}{t}}{1+\frac{j-1}{t}}.
	\end{equation*}
	Since $t \geq 1$ and $u<t$, it is clear that
	\begin{equation}\label{lem2eqn1}
	\frac{1}{2t(1+\frac{1}{2t})(1+\frac{u}{t})}\prod_{j=1}^u\frac{1-\frac{j-1}{t}}{1+\frac{j-1}{t}}\leq \dfrac{1}{2t}.
	\end{equation}
	By Lemma \ref{lem1}, it follows that
	\begin{equation}\label{lem2eqn2}
	\frac{1}{2t(1+\frac{1}{2t})(1+\frac{u}{t})}\prod_{j=1}^u\frac{1-\frac{j-1}{t}}{1+\frac{j-1}{t}} \geq  \frac{1}{2t}\Bigl(1-\frac{\frac{1}{2}+u+2\sum_{j=1}^u (j-1)}{t}\Bigr)= \frac{1}{2t}\Bigl(1-\frac{u^2+\frac{1}{2}}{t}\Bigr).
	\end{equation}
	Combining \eqref{lem2eqn1} and \eqref{lem2eqn2} concludes the proof.
\qed 

\emph{Proof of Lemma \ref{lem3}:}
	By Lemma \ref{lem1}, 
	\begin{equation}\label{lem3eqn1}
	\frac{1}{2t}\geq \frac{1}{1+2t}=\frac{1}{2t}\frac{1}{(1+\frac{1}{2t})}\geq\frac{1}{2t}\Bigl(1-\frac{1}{2t}\Bigr)\geq \frac{1}{2t}-\frac{1}{4t^2}. 
	\end{equation}
	Now
	\begin{equation*}
	\frac{2t\sum_{i=1}^u\frac{(-t)_i(-1)^i}{(t+i)(t)_i}}{1+2t}=\frac{1}{1+\frac{1}{2t}}\sum_{i=1}^u\frac{1}{t+i}\prod_{j=1}^i\frac{t-j+1}{t+j-1}=\frac{1}{t}\frac{1}{1+\frac{1}{2t}}\sum_{i=1}^u\frac{1}{1+\frac{i}{t}}\prod_{j=1}^i\frac{1-\frac{j-1}{t}}{1+\frac{j-1}{t}}. 
	\end{equation*}
	As $t \geq 1$ and $u<t$, it directly follows that
	\begin{equation}\label{lem3eqn2}
	\frac{1}{t}\frac{1}{1+\frac{1}{2t}}\sum_{i=1}^u\frac{1}{1+\frac{i}{t}}\prod_{j=1}^i\frac{1-\frac{j-1}{t}}{1+\frac{j-1}{t}} \leq \dfrac{u}{t}.
	\end{equation}
	Applying Lemma \ref{lem1}, we obtain
	\begin{equation}\label{lem3eqn3}
	\frac{1}{t}\frac{1}{1+\frac{1}{2t}}\sum_{i=1}^u\frac{1}{1+\frac{i}{t}}\prod_{j=1}^i\frac{1-\frac{j-1}{t}}{1+\frac{j-1}{t}} \geq \frac{1}{t}\sum_{i=1}^u1-\frac{\frac{1}{2}+i+2\sum_{j=1}^i(j-1)}{t}
	=\frac{u}{t}-\frac{u(2u^2+3u+4)}{6t^2}.
	\end{equation}
	Finally, \eqref{lem3eqn1}, \eqref{lem3eqn2}, and \eqref{lem3eqn3} imply the desired inequality. 
\qed

\emph{Proof of Lemma \ref{mainlemma2}:}
	Let $n\geq u$ be fixed. We have to show that $b_n\geq a_n$. First we note that
	$$a_{k+1}-a_n=\sum_{j=n}^k (a_{j+1}-a_j)\geq \sum_{j=n}^k (b_{j+1}-b_j)=b_{k+1}-b_n.$$
	Consequently, for all $k\geq n$ we have
	$$a_{k+1}-a_n\geq b_{k+1}-b_n\Leftrightarrow b_n-b_{k+1}\geq a_n-a_{k+1}.$$
	This implies that
	$$b_n=\lim_{k\to \infty} (b_n-b_{k+1})\geq \lim_{k\to \infty}(a_n-a_{k+1})=a_n.$$
\qed

\emph{Proof of Lemma \ref{helperC}:}
	We apply Lemma \ref{mainlemma2} with $a_n=\sum_{u=n+1}^{\infty} \frac{u^k\alpha^{2u}}{(2u)!}$ and $b_n=\frac{C_k}{n^2}$:
	$$a_{n+1}-a_n=-\frac{(n+1)^k\alpha^{2n+2}}{(2n+2)!}\ \ \text{and}\ \ b_{n+1}-b_n=-\frac{C_k(2n+1)}{n^2(n+1)^2}.$$ 
	Therefore $b_{n+1}-b_n\leq a_{n+1}-a_n$ is equivalent to
	$$\frac{(n+1)^k\alpha^{2n+2}}{(2n+2)!}\leq \frac{C_k(2n+1)}{n^2(n+1)^2}\Leftrightarrow f(n):=\frac{n^2(n+1)^{k+2}\alpha^{2n+2}}{(2n+1)(2n+2)!}\leq C_k .$$
	In order to prove $f(n)\leq C_k$, it suffices to prove $f(m)\leq C_k$, where $m$ is such that $f(m)$ is maximal. Hence in order to find such a $m$, we find the first $m$ such that $f(m+1)\leq f(m)$. This is equivalent to finding $\frac{f(m+1)}{f(m)}\leq 1$, also as we will see there is only one such maximum. Then $\underset{n\in\mathbb{N}}{\max}\ f(n)=f(m)$.
	Now
	$$\frac{f(n+1)}{f(n)}=\frac{(n+1)^2(n+2)^{k+2}\alpha^{2n+4}}{(2n+3)(2n+4)!}\frac{(2n+1)(2n+2)!}{n^2(n+1)^{k+2}\alpha^{2n+2}}=\frac{\alpha^2(n+2)^{k+2}(2n+1)}{(2n+4)(2n+3)^2(n+1)^kn^2}.$$
	Using Mathematica's implementation of Cylindrical Algebraic Decomposition \cite{Collins1975},  we obtain that $$\frac{\alpha^2(n+2)^{k+2}(2n+1)}{(2n+4)(2n+3)^2(n+1)^kn^2}\leq 1, \text{ for all $\alpha^2\leq \frac{800}{729}$.}$$ As $\alpha^2=\frac{\pi^2}{36}<\frac{800}{729}$,  $\underset{n\in\mathbb{N}}{\max}\ f(n)=f(1)$; i.e., $f(n)\leq f(1)=C_k$. 
	
\qed

\subsection{The Sigma simplification of $S_3(t,u)$ in Lemma \ref{S3lemma}}\label{reductionbySigma}\hfill\\
Using the symbolic summation package \texttt{Sigma}~\cite{Schneider:07a} and its underlying machinery in the setting of difference rings~\cite{Schneider:21} the inner sum $S_3(t,u)$ 
can be simplified as follows.  Recall from \eqref{S3lemprfeqn1} that
$$S_3(t,u)
=\sum_{s=0}^{t-u}\frac{1}{s+u}\Bigl(\frac{1}{2}-s-u\Bigr)_{s+u+1}\binom{-\frac{3}{2}}{t-s-u}\frac{(-s-u)_u}{(s+2u)!}.$$
After loading \texttt{Sigma} into the computer algebra system Mathematica
\begin{mma}
	\In << Sigma.m \\
	\vspace*{-0.1cm}
	\Print \LoadP{Sigma - A summation package by Carsten Schneider
		\copyright\ RISC-JKU}\\
\end{mma}

\noindent we input the sum under consideration

\begin{mma}
	\In mySum3=\sum_{s=0}^{t-u}\frac{1}{s+u}\Bigl(\frac{1}{2}-s-u\Bigr)_{s+u+1}\binom{-\frac{3}{2}}{t-s-u}\frac{(-s-u)_u}{(s+2u)!};\\
\end{mma}

\noindent and compute a recurrence of it by executing

\begin{mma}\MLabel{MMA:rec}
	\In rec3=GenerateRecurrence[mySum3]\\
	\Out (t - u) u SUM[u] + 2 (2 + t) (1 + u) SUM[u + 1] + (2 + u) (2 + t + u) SUM[u + 2] == 0\\
\end{mma}

\noindent As a result we get a homogeneous linear recurrence of order $2$ for $S_3(t,u)=\texttt{SUM[u](=mySum3)}$. Internally, Zeilberger's creative telescoping paradigm~\cite{AequalB} is applied which not only provides a recurrence but delivers simultaneously a proof certificate that guarantees the correctness of the result.

\medskip

\noindent\textit{Verification of the recurrence.} Denote the summand of $S_3(t,u)$ by $f(t,u,s)$; i.e. set
$$f(t,u,s)=\frac{1}{s+u}\Bigl(\frac{1}{2}-s-u\Bigr)_{s+u+1}\binom{-\frac{3}{2}}{t-s-u}\frac{(-s-u)_u}{(s+2u)!}.$$
Then one can verify that the polynomials $a_0(t,u)=u (t-u)$, $a_1(t,u)=2 (2+t) (1+u)$ and $a_2(t,u)=(2+u) (2+t+u)$ (free of the summation variable $s$) and the expression
$$g(t,u,s)=-\frac{\gamma(t,u,s) s \binom{-\frac{3}{2}}{-s
		+t
		-u
	} (-s
	-u
	)_u \big(
	\frac{1}{2}
	-s
	-u
	\big)_{1
		+s
		+u
}}{(s
	+2 u
	)!(s
	+u
	) (1
	+s
	+2 u
	) (2
	+s
	+2 u
	) (3
	+s
	+2 u
	) (-1
	+2 s
	-2 t
	+2 u
	)}$$
with 
\begin{align*}
\gamma(t,u,s)=&-6 s
-6 s^2
+16 s^3
+8 s^4
+6 t
-6 s t
-46 s^2 t
-20 s^3 t
+12 t^2
+30 s t^2
+12 s^2 t^2\\
&-12 u
-22 s u
+66 s^2 u
+64 s^3 u
+8 s^4 u
-7 t u
-138 s t u
-126 s^2 t u
-16 s^3 t u\\
&+52 t^2 u
+57 s t^2 u
+8 s^2 t^2 u
-27 u^2
+88 s u^2
+172 s^2 u^2
+44 s^3 u^2
-108 t u^2\\
&-235 s t u^2
-68 s^2 t u^2
+57 t^2 u^2
+24 s t^2 u^2
+32 u^3
+192 s u^3
+92 s^2 u^3\\
&-140 t u^3
-98 s t u^3
+18 t^2 u^3
+75 u^4
+86 s u^4
-48 t u^4
+30 u^5
\end{align*}
satisfy the summand recurrence
\begin{equation}\label{Equ:SummandRec}
g(t,u,s+1)-g(t,u,s)=a_0(t,u) f(t,u,s)+a_1(t,u) f(t,u+1,s)+a_2(t,u) f(t,u+2,s)
\end{equation}
for all $0\leq s\leq t-u$ with $t\geq u$. The components of the summand recurrence can be obtained with the function call \texttt{CreativeTelescoping[mySum3]}.
Summing the verified equation~\eqref{Equ:SummandRec} over $s$ from $0$ to $t-u$ yields the output recurrence~\myOut{\ref{MMA:rec}}, which at the same time yields a proof for the correctness of \myOut{\ref{MMA:rec}}.\qed

\medskip

We remark that \texttt{Sigma}'s creative telescoping approach works not only for hypergeometric sums (here one could use, for instance, also the Paule-Schorn implementation ~\cite{PauleSchorn:95} of Zeilberger's algorithm~\cite{AequalB}), but can be applied in the general setting of difference rings which allows to treat summands built by indefinite nested sums and products. More involved examples in the context of plane partitions can be found, e.g., in~\cite{APS:05}. 

We are now in the position to solve the output reccurrence \myOut{\ref{MMA:rec}} with the function call

\begin{mma}
	\In recSol=SolveRecurrence[rec3,SUM[u]]\\
	\Out \bigg\{\big\{	0 , (-1)^u\big\},
	\big\{0 ,\frac{(2
		+t
		-u
		)}{u (2
		+t
		+u
		)}\frac{(-t)_u}{(2+t)_u}+2(-1)^u 
	\sum_{i=1}^u \frac{(-1)^i (-t)_i}{(2
		+i
		+t
		) (2+t)_i}\big\}, \big\{1,0\big\}\bigg\}\\
\end{mma}

\noindent This means that we found two linearly independent solutions (the list entries whose first entry is a zero) that span the full solution space, i.e., the general solution to \myOut{\ref{MMA:rec}} is
\begin{equation}\label{Equ:GeneralSol}
G(t,u)=c_1(t)\,(-1)^u+c_2\Big(\frac{(2
	+t
	-u
	) }{u (2
	+t
	+u
	)}\frac{(-t)_u}{(2+t)_u}+2 (-1)^u 
\sum_{i=1}^u \frac{(-1)^i (-t)_i}{(2
	+i
	+t
	) (2+t)_i}
\Big)
\end{equation}
where the $c_1,c_2$ are constants being free of $u$.
For further details on the underlying machinery (inspired by~\cite{AequalB}) we refer to~\cite{Schneider:21}. 

\medskip

\noindent\textit{Verification of the general solution}.
The correctness of the solutions can be verified by plugging them into the recurrence~\myOut{\ref{MMA:rec}} and applying (iteratively) the shift relations
\begin{align*}
(-1)^{u+1}&=-(-1)^u,\\
(-t)_{1+u}&=(-t
+u
) (-t)_u,\\
(2+t)_{1+u}&=(2
+t
+u
) (2+t)_u,\\
\sum_{i=1}^{1+u} \frac{(-1)^i (-t)_i}{(2
	+i
	+t
	) (2+t)_i}&=
\sum_{i=1}^u \frac{(-1)^i (-t)_i}{(2
	+i
	+t
	) (2+t)_i}
+\frac{(-1)^u (t
	-u
	) (-t)_u}{(2
	+t
	+u
	) (3
	+t
	+u
	) (2+t)_u}.
\end{align*}
Then simple rational function arithmetic shows that the obtained expression collapses to zero.\qed

\medskip

Finally, we compute the first two initial values (by another round of symbolic summation) and find that
\begin{equation}\label{Equ:InitialV}
\begin{split}
S_3(t,1)&=(-1)^{t}-\frac{(t+2)}{2 (1+t)}\binom{-\frac{3}{2}}{t},\\
S_3(t,2)&=-(-1)^{t}
+\frac{\big(
	8+7 t+t^2\big)}{4 (1+t) (2+t)}\binom{-\frac{3}{2}}{t}.
\end{split}
\end{equation}
With this information we can set  
$c_1=-(-1)^{t}+\frac{(3 t+4)}{2 (t+1) (t+2)}\binom{-\frac{3}{2}}{t}$ and $c_2=\frac{1}{2} \binom{-\frac{3}{2}}{t}$ so that the general solution~\eqref{Equ:GeneralSol} agrees with $S_3(t,u)$ for $u=1,2$. Since $S_3(t,u)$ and the specialized general solution are both solutions of the recurrence~\myOut{\ref{MMA:rec}} and the first two initial values agree, they are identical for all $u\geq0$ with $u\leq t$. This last step of combining the solutions accordingly can be accomplished by inserting the list of two initial values
\begin{mma}
	\In initialL=\bigg\{(-1)^t
	-\frac{(t+2)}{2 (1+t)}\binom{-\frac{3}{2}}{t},
	-(-1)^{t}+\frac{\big(
		8+7 t+t^2\big)}{4 (1+t) (2+t)}\binom{-\frac{3}{2}}{t}
	\bigg\};\\
\end{mma}
\noindent and then executing the command
\begin{mma}
	\In FindLinearCombination[recSol3,\{1,initialL\},u,2]\\
	\Out -(-1)^{t}(-1)^{u}
	+\frac{1}{2} \binom{-\frac{3}{2}}{t}\Big(
	\frac{(2
		+t
		-u
		)}{u (2
		+t
		+u
		)}\frac{(-t)_u}{(2+t)_u}+(-1)^u\Big(
	\frac{1}{1+t}
	+\frac{2}{2+t}
	+2 
	\sum_{i=1}^u \frac{(-1)^i (-t)_i}{(2
		+i
		+t
		) (2+t)_i}
	\Big) 
	\Big)\\
\end{mma}

Carrying out all the steps above (including also the calculation of the initial values) can be rather cumbersome. In order to support the user with the simplification of such problems, the package 

\begin{mma}
	\In << EvaluateMultiSums.m \\
	\vspace*{-0.1cm}
	\Print \LoadP{EvaluateMultiSum by Carsten Schneider
		\copyright\ RISC-JKU}\\
\end{mma}

\noindent has been developed. More precisely, by applying the command \texttt{EvaluateMultiSums} to the input sum $\texttt{mySum3}(=S_3(t,u))$, all the above steps are carried out automatically and one obtains in one stroke the desired result:

\begin{mma}
	\In sol3=EvaluateMultiSum[mySum3,\{\},\{u, t\}, \{0, 1\}, \{t, \infty\}]\\
	\Out -(-1)^{t}(-1)^{u}
	+\frac{1}{2} \binom{-\frac{3}{2}}{t}\Big(
	\frac{(2
		+t
		-u
		)}{u (2
		+t
		+u
		)}\frac{(-t)_u}{(2+t)_u}+(-1)^u\Big(
	\frac{1}{1+t}
	+\frac{2}{2+t}
	+2 
	\sum_{i=1}^u \frac{(-1)^i (-t)_i}{(2
		+i
		+t
		) (2+t)_i}
	\Big) 
	\Big)\\
\end{mma}

Since we prefer to rewrite the found expression in terms of the Pochhammer symbol $(t)_u$, we execute the final simplification step with the function call

\begin{mma}
	\In SigmaReduce[sol3, u, Tower\to \{(t)_u\}]\\
	\Out -(-1)^{t}(-1)^{u}
	+\binom{-\frac{3}{2}}{t}\Big(
	\frac{t (1
		+2 t
		-2 u
		)}{2 (1+2 t) u (t
		+u
		)}\frac{(-t)_u}{(t)_u}+
	(-1)^u\Big(\frac{1}{1+2 t}
	+\frac{2 t}{1+2 t}\sum_{i=1}^u \frac{(-1)^i (-t)_i}{(i
		+t
		) (t)_i}\Big)
	\Big)\\
\end{mma}

\begin{remark}\label{remarkappendix}
We should mention that there is no particular reason for explaining the details of \textnormal{\texttt{Sigma}} application only for $S_3(t,u)$. The simplification of the sums $S_1(t,u)$, $S_2(t,u)$, and $S_4(t,u)$, as in \eqref{S1lemprfeqn1}, \eqref{S2lemprfeqn1}, and \eqref{S4lemprfeqn1}, respectively, works completely analogously.  
\end{remark}

\section{Concluding remarks}\label{outlook}
We conclude this paper with a list of possible future work based on the method devised in this paper and its further applications. 
\begin{enumerate}
	\item A prudent application of our method might lead to obtaining full asymptotic expansion and respective error bounds for a broad class of functions; for example: $q(n)$-partitions into distinct parts, $p^{s}(n)$-partitions into perfect $s$th powers, $k$-colored partitions, $k$-regular partitions, Andrews' spt-function, $\alpha(n)$-$n$th coefficient of Ramanujan's third order mock theta function $f(q)$, the coefficient sequence of Klein's $j$-function, etc.  
	\item More generally consider the class of Dedekind $\eta$-quotients which fit perfectly into \cite[Thm. 1.1]{Chern} or \cite[Thm. 1.1]{Sussman}. Therefore one can also obtain a full asymptotic expansion and infinite families of inequalities for the coefficient sequence arising from the Fourier expansion of the considered Dedekind $\eta$-function.
	\item Theorem \ref{Mainthm} can be utilized as a black box in order to prove inequalities pertaining to the partition function by constructing an unified framework. A major class of inequalities for $p(n)$ can be separated into the following two categories among many others:
	\begin{enumerate}
		\item Tur\'{a}n inequalities and its higher order analogues related to the real rootedness of Jensen polynomials associated to $p(n)$, studied in \cite{desalvo2015log}, \cite{chen2019higher}, and \cite{griffin2019jensen}.
		\item Linear homogeneous inequalities for $p(n)$; i.e.,
		$$\sum_{i=1}^{r}p(n+x_i)\leq \sum_{i=1}^{s}p(n+y_i).$$
		For more details we refer to \cite{katriel2018asymptotically,merca2020general}.
	\end{enumerate}
\item More generally, it would be interesting to design a constructive method to decide whether for some positive integer $N$ a relation of the form
$$\sum_{j=1}^{M_1}\prod_{i=1}^{T_1}p(n+s^{(j)}_i) \leq \sum_{j=1}^{M_2}\prod_{i=1}^{T_2}p(n+r^{(j)}_i)$$
holds for all $n \geq N$ or not.
\end{enumerate}

\begin{center}
	\textbf{Acknowledgements}
\end{center}
Banerjee was funded by the Austrian Science Fund (FWF): W1214-N15, project
DK6. Paule and Radu were supported by grant SFB F50-06 of the FWF. Schneider has received funding from the Austrian Science Fund (FWF) grants SFB F50 (F5009-N15) and P33530.

\section{Conflict of interest statement}
On behalf of all authors, the corresponding author states that there is no conflict of interest.
\section{Data availability statement}
No datasets were generated or analysed during the current study.


\end{document}